\title[Stabilization by a vacuum bubble]{Stabilization  of an incompressible fluid-elastic structure system using a vacuum bubble}
\author[B.~Ingimarson]{Benjamin Ingimarson}
\address{Department of Mathematics, University of Southern California, Los Angeles, CA 90089}
\email{ingimars@usc.edu}
\author[I.~Kukavica]{Igor Kukavica}
\address{Department of Mathematics, University of Southern California, Los Angeles, CA 90089}
\email{kukavica@usc.edu}
\author[Wojciech~S.~O\.{z}a\'{n}ski]{Wojciech~S.~O\.{z}a\'{n}ski}
\address{Department of Mathematics, Florida State University, Tallahassee, FL 32306, and Department of Mathematics, Princeton University, Princeton, NJ 08540}
\email{wozanski@fsu.edu}
  \chardef\forshowkeys=0
  \chardef\refcheck=0
  \chardef\showllabel=0
  \chardef\sketches=0
  \chardef\figures=1
\begin{document}
\def\YY{X}
\def\OO{\mathcal O}
\def\SS{\mathbb S}
\def\CC{\mathbb C}
\def\RR{\mathbb R}
\def\TT{\mathbb T}
\def\ZZ{\mathbb Z}
\def\HH{\mathbb H}
\def\RSZ{\mathcal R}
\def\LL{\mathcal L}
\def\SL{\LL^1}
\def\ZL{\LL^\infty}
\def\GG{\mathcal G}
\def\tt{\langle t\rangle}
\def\erf{\mathrm{Erf}}
\def\mgt#1{\textcolor{magenta}{#1}}
\def\ff{\rho}
\def\gg{G}
\def\sqrtnu{\sqrt{\nu}}
\def\ww{w}
\def\ft#1{#1_\xi}
\def\les{\lesssim}
\def\lec{\lesssim}
\def\ges{\gtrsim}
\renewcommand*{\Re}{\ensuremath{\mathrm{{\mathbb R}e\,}}}
\renewcommand*{\Im}{\ensuremath{\mathrm{{\mathbb I}m\,}}}
\ifnum\showllabel=1
 \def\llabel#1{\marginnote{\color{lightgray}\rm\small(#1)}[-0.0cm]\notag}
\else
 \def\llabel#1{\notag}
\fi
\newcommand{\norm}[1]{\left\|#1\right\|}
\newcommand{\nnorm}[1]{\lVert #1\rVert}
\newcommand{\abs}[1]{\left|#1\right|}
\newcommand{\NORM}[1]{|\!|\!| #1|\!|\!|}
\newtheorem{theorem}{Theorem}[section]
\newtheorem{Theorem}{Theorem}[section]
\newtheorem{corollary}[theorem]{Corollary}
\newtheorem{Corollary}[theorem]{Corollary}
\newtheorem{proposition}[theorem]{Proposition}
\newtheorem{Proposition}[theorem]{Proposition}
\newtheorem{Lemma}[theorem]{Lemma}
\newtheorem{lemma}[theorem]{Lemma}
\theoremstyle{definition}
\newtheorem{definition}{Definition}[section]
\newtheorem{remark}[Theorem]{Remark}
\def\theequation{\thesection.\arabic{equation}}
\numberwithin{equation}{section}
\definecolor{mygray}{rgb}{.6,.6,.6}
\definecolor{myblue}{rgb}{9, 0, 1}
\definecolor{colorforkeys}{rgb}{1.0,0.0,0.0}
\newlength\mytemplen
\newsavebox\mytempbox
\makeatletter
\newcommand\mybluebox{%
    \@ifnextchar[
       {\@mybluebox}%
       {\@mybluebox[0pt]}}
\def\@mybluebox[#1]{%
    \@ifnextchar[
       {\@@mybluebox[#1]}%
       {\@@mybluebox[#1][0pt]}}
\def\@@mybluebox[#1][#2]#3{
    \sbox\mytempbox{#3}%
    \mytemplen\ht\mytempbox
    \advance\mytemplen #1\relax
    \ht\mytempbox\mytemplen
    \mytemplen\dp\mytempbox
    \advance\mytemplen #2\relax
    \dp\mytempbox\mytemplen
    \colorbox{myblue}{\hspace{1em}\usebox{\mytempbox}\hspace{1em}}}
\makeatother
\def\rr{r}
\def\weaks{\text{\,\,\,\,\,\,weakly-* in }}
\def\inn{\text{\,\,\,\,\,\,in }}
\def\cof{\mathop{\rm cof\,}\nolimits}
\def\Dn{\frac{\partial}{\partial N}}
\def\Dnn#1{\frac{\partial #1}{\partial N}}
\def\tdb{\tilde{b}}
\def\tda{b}
\def\qqq{u}
\def\lat{\Delta_2}
\def\biglinem{\vskip0.5truecm\par==========================\par\vskip0.5truecm}
\def\inon#1{\hbox{\ \ \ \ \ \ \ }\hbox{#1}}                
\def\onon#1{\inon{on~$#1$}}
\def\inin#1{\inon{in~$#1$}}
\def\FF{F}
\def\andand{\text{\indeq and\indeq}}
\def\ww{w(y)}
\def\ll{{\color{red}\ell}}
\def\ee{\mathrm{e}}
\def\startnewsection#1#2{ \section{#1}\label{#2}\setcounter{equation}{0}}   
\def\nnewpage{ }
\def\sgn{\mathop{\rm sgn\,}\nolimits}    
\def\Tr{\mathop{\rm Tr}\nolimits}    
\def\div{\mathop{\rm div}\nolimits}
\def\curl{\mathop{\rm curl}\nolimits}
\def\dist{\mathop{\rm dist}\nolimits}
\def\id{\mathop{\rm id}\nolimits}
\def\supp{\mathop{\rm supp}\nolimits}
\def\indeq{\quad{}}           
\def\period{.}                       
\def\semicolon{\,;}                  
\def\nts#1{{\cor #1\cob}}
\def\cmi#1{\text{~{{\coli IK: \underline{#1}}}~}}
\def\coli{\color{colorigor}}
\definecolor{colorigor}{rgb}{1, 0.2, 0.8}
\def\colr{\color{red}}
\def\colrr{\color{black}}
\def\colb{\color{black}}
\def\coly{\color{lightgray}}
\definecolor{colorgggg}{rgb}{0.1,0.5,0.3}
\definecolor{colorllll}{rgb}{0.0,0.7,0.0}
\definecolor{colorhhhh}{rgb}{0.3,0.75,0.4}
\definecolor{colorpppp}{rgb}{0.7,0.0,0.2}
\definecolor{coloroooo}{rgb}{0.45,0.0,0.0}
\definecolor{colorqqqq}{rgb}{0.1,0.7,0}
\def\colg{\color{colorgggg}}
\def\collg{\color{colorllll}}
\def\cole{\color{coloroooo}}
\def\coleo{\color{colorpppp}}
\def\cole{\color{black}}
\def\colu{\color{blue}}
\def\colc{\color{colorhhhh}}
\def\colW{\colb}   
\definecolor{coloraaaa}{rgb}{0.6,0.6,0.6}
\def\colw{\color{coloraaaa}}
\def\comma{ {\rm ,\qquad{}} }            
\def\commaone{ {\rm ,\quad{}} }          
\def\les{\lesssim}
\def\nts#1{{\color{blue}\hbox{\bf ~#1~}}} 
\def\ntsf#1{\footnote{\color{colorgggg}\hbox{#1}}} 
\def\blackdot{{\color{red}{\hskip-.0truecm\rule[-1mm]{4mm}{4mm}\hskip.2truecm}}\hskip-.3truecm}
\def\bluedot{{\color{blue}{\hskip-.0truecm\rule[-1mm]{4mm}{4mm}\hskip.2truecm}}\hskip-.3truecm}
\def\purpledot{{\color{colorpppp}{\hskip-.0truecm\rule[-1mm]{4mm}{4mm}\hskip.2truecm}}\hskip-.3truecm}
\def\greendot{{\color{colorgggg}{\hskip-.0truecm\rule[-1mm]{4mm}{4mm}\hskip.2truecm}}\hskip-.3truecm}
\def\cyandot{{\color{cyan}{\hskip-.0truecm\rule[-1mm]{4mm}{4mm}\hskip.2truecm}}\hskip-.3truecm}
\def\reddot{{\color{red}{\hskip-.0truecm\rule[-1mm]{4mm}{4mm}\hskip.2truecm}}\hskip-.3truecm}
\def\tdot{{\color{green}{\hskip-.0truecm\rule[-.5mm]{3mm}{3mm}\hskip.2truecm}}\hskip-.1truecm}
\def\gdot{\greendot}
\def\bdot{\bluedot}
\def\ydot{\cyandot}
\def\rdot{\cyandot}
\def\fractext#1#2{{#1}/{#2}}
\def\ii{\hat\imath}
\def\fei#1{\textcolor{blue}{#1}}
\def\vlad#1{\textcolor{cyan}{#1}}
\def\igor#1{\text{{\textcolor{colorqqqq}{#1}}}}
\def\igorf#1{\footnote{\text{{\textcolor{colorqqqq}{#1}}}}}
\def\Omf{\Omega_{\text f}}
\def\Ome{\Omega_{\text e}}
\def\Omb{\Omega_{\text b}}
\def\Gaf{\Gamma_{\text f}}
\def\Gae{\Gamma_{\text e}}
\def\Gab{\Gamma_{\text b}}
\def\Gac{\Gamma_{\text c}}
\def\Nf{N^{\text f}}
\def\Ne{N^{\text e}}

\newcommand{\p}{\partial}
\renewcommand{\d}{\mathrm{d}}
\newcommand{\UE}{U^{\rm E}}
\newcommand{\PE}{P^{\rm E}}
\newcommand{\KP}{K_{\rm P}}
\newcommand{\uNS}{u^{\rm NS}}
\newcommand{\vNS}{v^{\rm NS}}
\newcommand{\pNS}{p^{\rm NS}}
\newcommand{\omegaNS}{\omega^{\rm NS}}
\newcommand{\uE}{u^{\rm E}}
\newcommand{\vE}{v^{\rm E}}
\newcommand{\pE}{p^{\rm E}}
\newcommand{\omegaE}{\omega^{\rm E}}
\newcommand{\ua}{u_{\rm   a}}
\newcommand{\va}{v_{\rm   a}}
\newcommand{\omegaa}{\omega_{\rm   a}}
\newcommand{\ue}{u_{\rm   e}}
\newcommand{\ve}{v_{\rm   e}}
\newcommand{\omegae}{\omega_{\rm e}}
\newcommand{\omegaeic}{\omega_{{\rm e}0}}
\newcommand{\ueic}{u_{{\rm   e}0}}
\newcommand{\veic}{v_{{\rm   e}0}}
\newcommand{\up}{u^{\rm P}}
\newcommand{\vp}{v^{\rm P}}
\newcommand{\tup}{{\tilde u}^{\rm P}}
\newcommand{\bvp}{{\bar v}^{\rm P}}
\newcommand{\omegap}{\omega^{\rm P}}
\newcommand{\tomegap}{\tilde \omega^{\rm P}}
\newcommand{\eps}{\varepsilon}  
\newcommand{\eqnb}{\begin{equation}}
\newcommand{\eqne}{\end{equation}}
  
\renewcommand{\up}{u^{\rm P}}
\renewcommand{\vp}{v^{\rm P}}
\renewcommand{\omegap}{\Omega^{\rm P}}
\renewcommand{\tomegap}{\omega^{\rm P}}



\newcommand{\incfig}[2][\columnwidth]{%
  \def\svgwidth{#1}%
  \ifnum\figures=1
    \import{./figures/}{#2.pdf_tex}%
  \fi
}

\begin{abstract}
We prove a~priori estimates for the system of partial differential equations modeling the interaction between an elastic body and an incompressible fluid in a 3D curved domain.
The fluid is governed by the incompressible Navier-Stokes equations and contains a  bubble whose interior is a vacuum. 
The elastic body is described by a damped wave equation, and interaction with the fluid takes place along a free interface whose initial domain is curved. We show that the presence of the vacuum bubble stabilizes the system in the sense that it provides control of the average of the pressure function, and hence allows global existence and exponential decay of smooth solutions for small data.  
\end{abstract}

\maketitle
\setcounter{tocdepth}{2} 
\section{Introduction}\label{sec00}
We consider the model of a viscous incompressible fluid governed by the Navier-Stokes equations,
  \begin{align}
   u_t
   -\Delta u
   + (u \cdot \nabla) u
   + \nabla p
   &= 
   0
   ,
   \\
   \div u
   &=
   0
   ,
   \label{EQ0aa}
  \end{align}
interacting with an elastic body described by a damped wave equation,
  \begin{equation}
   w_{tt}
   -\Delta w
   + \alpha w_t
   =
   0
   .
   \label{EQ0ab}
  \end{equation}
The fluid domain includes a ``vacuum bubble'' whose interface with the fluid is modeled by the free-surface condition
  \begin{equation}
   (-pI + \nabla u)\nu
   =
   0
   ,
   \label{EQac}
  \end{equation}
where $\nu$ denotes the dynamic normal on the interface.
The initial domain is assumed to be curved with smooth boundaries; see Figure~\ref{F1}. 
In this paper, we provide a~priori estimates for small initial data and show that the energy of the system decays exponentially in time. Such a system, in the case of the flat domain $\TT^2 \times (0,2)$, was previously considered by \cite{KO}, who identified two quantities of the system,
\eqnb\label{two_suspects}
\int_{\Gac} w \quad \text{ and } \quad \int_{\Gac} q,
\eqne
whose exponential decay in time cannot be guaranteed. The authors of \cite{KO} consider an appropriate modification of the state variables and omit the lowest level energy estimate to bypass this problem, but the approach is strictly limited to the flat domain considered. In particular, global well-posedness with small data remains an open problem in the case of a general domain. \\

The main purpose of this paper is to show that, remarkably, introducing a mere vacuum bubble completely resolves the issues encountered by \cite{KO}, and applies to  any curved domain. In particular the vacuum bubble allows the lowest order energy estimate as well as allows us to control the decay of the quantities~\eqref{two_suspects}.
\\

This result lies within the broader context of fluid-structure interaction which has garnered substantial interest in literature both mathematical and physical.
The latter includes areas of applied sciences ranging from medicine to engineering, but in terms of purely mathematical interest, questions surrounding well-posedness of these systems have been given significant attention. 
Our system, in particular, relates to the dynamics of an elastic solid inside a viscous fluid. 
The principal difficulty surrounding these problems stems from the coupling of hyperbolic and parabolic dynamics, the former amounting to a loss of regularity. 

The first treatment of this problem was conducted by Coutand and Shkoller~\cite{CS1,CS2} who coupled the incompressible Navier-Stokes equations with the linear Kirchhoff equations and established local-in-time existence of smooth solutions; see~\cite{CS1}. 
Further results on local well-posedness for the incompressible case were obtained in~\cite{IKLT1,KT1,KT2}.
The case of compressible flows has also been examined, see~\cite{BG1,BG2,KT3}.
More relevant to our paper, global well-posedness for small data has been considered with extra stabilization conditions such as damping terms in the elastic equation.
This was first examined in~\cite{IKLT2}, where the authors included interior damping of the form $\alpha w_t + \beta w$ in the wave equation as well as damping along the common boundary of the fluid and elastic body.
In a later work~\cite{IKLT3}, the same authors considered the case with only interior damping.
Finally, global well-posedness for the case when $\beta = 0$ was studied in~\cite{KO} with a flat initial domain, periodic in the $(x_1,x_2)$ variables. 
The main conundrum of this setting is the lack of control on the lowest order terms for the pressure and the elastic displacement. 
Indeed, with no further stabilization conditions attached to the problem, lowest order control on either term requires control of the other, making the argument circular.
The authors bypassed this issue by controlling a ``double-normalized wave displacement'' instead of the usual elastic displacement $w$, which eliminates the need for lowest-order control of these terms.
The method does not  extend to the curved case due to the need for exclusion of $S=\id$
in the list of operators (see \eqref{EQ15} below) which in turn does not allow for absorption of lower order terms.

\section{The model}\label{sec01}

We consider a model of an elastic body $\Ome(t)$ interacting with a viscous incompressible fluid in $\Omf(t)$ where $\Gaf$ and $\Gae$ are rigid.
We assume all boundaries are smooth and define $\Ome(0) = \Ome$, $\Omf(0) = \Omf$ and similarly for $\Gab$ and~$\Gac$.
We have
  \begin{equation}
   \partial \Omf(t) 
   = \Gaf \cup \Gab(t) \cup \Gac(t)
   \label{EQ00a}
  \end{equation}
and 
  \begin{equation}
   \partial \Ome(t)
   = \Gac(t) \cup \Gae
   .
   \label{EQ00b}
  \end{equation}
See Figure~\ref{F1} for a sketch. 
  \begin{figure}[ht]
    \centering
    \incfig[0.4\columnwidth]{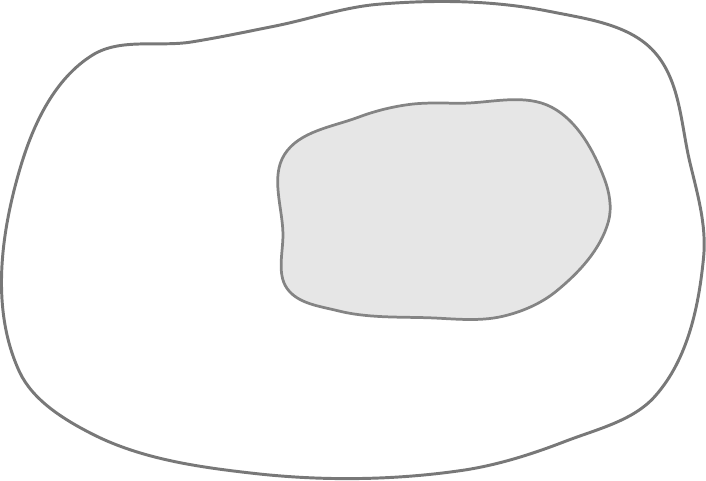}
    \caption{The sketch of the model at time $t=0$.}
    \label{F1}
  \end{figure}

\noindent
Note that $\Ome$ does not include the darkest shaded region inside the surface~$\Gae$.

The elastic body in $\Ome(t)$ is described by a displacement function $w(x,t) = \eta(x,t) - x$ where $\eta$ denotes the Lagrangian mapping of a particle at $x \in \Ome(0)$.
The displacement $w$ is determined by the equation,
  \begin{equation}
   w_{tt}
   - \Delta w
   + \alpha w_t
   = 0
   \inon{in $\Ome \times (0,\infty)$.}
   \label{EQ01}
  \end{equation}
We also assume zero displacement on $\Gae$,
  \begin{equation}
   w(x,t)
   = 0
   \inon{for $x \in \Gae$, $t > 0$}
   \label{EQ02}
   ,
  \end{equation}
representing $\Gae$ as the place where the elastic body is attached.
From the figure, we see that $\Gac(t)$ is shared by the elastic body and the fluid.
In $\Omf(t)$, the fluid is governed by the incompressible Navier-Stokes equations 
  \begin{align}
   \begin{split}
    u_t
    - \Delta u
    + (u \cdot \nabla)u
    + \nabla p 
    &= 0
    ,
    \\
    \div u 
    &= 0
    \inon{in $\Omf(t)$.}
   \end{split}
   \label{EQ03}
  \end{align}
  For the vacuum bubble $\Omb(t)$  we impose the assumption 
  \begin{equation}
   (-pI + \nabla u) \nu 
   = 0
   \label{EQ03a}
  \end{equation}
on the normal components of the stress at the free interface~$\Gab(t)$.
Here, we denote $\nu$ as the time-dependent normal vector on $\Gab(t)$, outward relative to the fluid domain~$\Omf(t)$.
Now, we reformulate the fluid equations into Lagrangian coordinates. 
In the fluid domain, the displacement $\eta(\cdot, t)$ is defined as a solution to 
  \begin{align}
   \begin{split}
    \eta_t(x,t)
    &= 
    v(x,t)
    ,
    \\
    \eta(0,x) 
    &=
    x
    ,
   \end{split}
   \label{EQ04}
  \end{align}
for $x \in \Omf$ and $t  > 0$, where $v$, representing the Lagrangian velocity, is defined as 
  \begin{equation}
   v(x,t) 
   = 
   u(\eta(x,t), t)
   .
   \label{EQ05}
  \end{equation}
We define the Lagrangian pressure $q(x,t) = p(\eta(x,t), t)$ as well.
Under the new coordinates, the Navier-Stokes equations take the form 
  \begin{align}
   \begin{split}
    \partial_t v_i
    - \partial_j (a_{j \ell} a_{k \ell} \partial_k v_i)
    + \partial_k (a_{k i} q) 
    &= 0
    \comma i = 1,2,3,
    \\
    a_{k i} \partial_k v_i 
    &= 0
   \end{split}
   \label{EQ06}
  \end{align}
in $\Omf \times (0,T)$, where $a = (\nabla \eta)^{-1}$.
Also note that by incompressibility of the fluid we have that $\det{a} = 1$. 
Hence, $a$ is its own cofactor matrix and the Piola identity
  \begin{equation}
   \partial_i a_{ij}
   = 0
   \label{EQ07}
  \end{equation}
holds.
We also assume the continuity of velocities on the common boundary,
  \begin{equation}
   w_t 
   =
   v
   \inon{ on $\Gac \times (0,T)$, }
   \label{EQ08}
  \end{equation}
and continuity of the normal components of the stresses between the elastic body and the fluid
  \begin{equation}
    - a_{j i}q N_j
    + a_{j \ell} a_{k \ell} \partial_k v_i N_j
    =
    \partial_j w_i N_j 
   \inon{on $\Gac \times (0,T)$}
   \comma i =1,2,3,
   \label{EQ09}
  \end{equation}
where $N$ denotes the time-independent outward normal vector with respect to~$\Omega_e$.
The boundary condition on $\Gab$ given in~\eqref{EQ03a} becomes
  \begin{equation}
   - a_{j i} q N_j
   + a_{j \ell} a_{k \ell} \partial_k v_i N_j
   = 0
   \inon{on $\Gab \times (0,T)$}
   \comma i =1,2,3,
   \label{EQ10}
  \end{equation}
where the outward normal is with respect to~$\Omf$. 
Finally, we also assume the no-slip boundary condition,
  \begin{equation}
   v = 0
   \inon{on $\Gaf \times (0,T)$}
   .
   \label{EQ11}
  \end{equation}
In this paper, we prove the following a~priori result.
\begin{theorem}
\label{T01}
Let $(v,w,q,\eta,a)$ be a smooth solution to our system on some time interval $[0,T)$, and set 
  \begin{equation}
   Y(t)
   =
   \|v\|_{H^3}^2
   + \|v_t\|_{H^2}^2
   + \|v_{tt}\|_{L^2}^2
   + \|w\|_{H^3}^2
   + \|w_t\|_{H^2}^2
   + \|w_{tt}\|_{H^1}^2
   + \|w_{ttt}\|_{L^2}^2
   .
   \label{EQ12}
  \end{equation}
Then there exists $C \geq 1$ and $\eps > 0$, independent of $T$, such that if $Y(0) \leq \eps$, then 
  \begin{equation}
   Y(t) 
   \leq
   C \eps \ee^{-t/C}
   ,
   \label{EQ13}
  \end{equation}
for $t \in [0,T)$.
\end{theorem}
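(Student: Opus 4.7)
The plan is to show that the functional $Y(t)$ satisfies a differential inequality of the form $Y'(t) + c\, Y(t) \leq C\, Y(t)^{3/2}$, so that a standard continuation argument closes under smallness of $Y(0)$. To this end I would work throughout in the Lagrangian coordinates given by \eqref{EQ04}--\eqref{EQ11}, exploit the fact that $a - I$ is small (via $\eta_t = v$), and combine three types of estimates at each level $k=0,1,2$ of time differentiation: (i) the natural energy identity for $\p_t^k v$ and $\p_t^{k+1} w$; (ii) the interface cancellation along $\Gac$ coming from \eqref{EQ08}--\eqref{EQ09}; and (iii) Stokes/elastic elliptic regularity to trade the time derivatives for the spatial Sobolev norms in \eqref{EQ12}.

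First I would differentiate \eqref{EQ06} and \eqref{EQ01} in time up to the orders indicated by $Y$, test the resulting equations against $\p_t^k v$ and $\p_t^{k+1} w$, and integrate by parts. The viscous term yields $\|\nabla \p_t^k v\|_{L^2}^2$, the damping term yields $\alpha\|\p_t^{k+1}w\|_{L^2}^2$, and the boundary integrals on $\Gac$ cancel after using \eqref{EQ08} and the stress-balance \eqref{EQ09}. Nonlinear terms arising from $\partial_t(a_{j\ell}a_{k\ell})$ and $\partial_t(a_{ki}q)$ are cubic and absorbed using $\|a-I\|_{L^\infty} \lesssim Y^{1/2}$, the Piola identity \eqref{EQ07}, and $\det a = 1$.

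The crucial novel step is the control of the pressure mean, which is exactly what forces the ``double-normalization'' workaround in \cite{KO} and obstructs extension to curved domains. Because the stress-free condition \eqref{EQ10} holds on $\Gab$, I would test the momentum equation against a carefully chosen vector field $\varphi \in H^1(\Omf)$ with $\varphi=0$ on $\Gaf \cup \Gac$ and $\int_{\Omf} a_{ki}\p_k\varphi_i \ne 0$ (e.g.\ a smooth extension of a function supported near $\Gab$). The boundary integral on $\Gab$ vanishes by \eqref{EQ10}, leaving
\begin{equation}
\int_{\Omf} a_{ki} q\, \p_k\varphi_i = -\int_{\Omf} \p_t v_i \,\varphi_i - \int_{\Omf} a_{j\ell}a_{k\ell}\p_k v_i\, \p_j\varphi_i,
\end{equation}
which quantitatively fixes $\int_{\Omf} q$ in terms of $\|v_t\|_{L^2}$ and $\|\nabla v\|_{L^2}$ plus cubic errors. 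Coupled with the Dirichlet condition \eqref{EQ02} and a Poincar\'e inequality on $\Omega_e$, this in turn provides the lowest-order control of $w$ and of the two quantities in \eqref{two_suspects}, so that Stokes regularity for $(v,q)$ and elliptic regularity for the wave equation can be iterated to recover $\|v\|_{H^3}+\|q\|_{H^2}$ and $\|w\|_{H^3}$.

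The main obstacle is precisely the step above together with its propagation to the higher-order time-differentiated equations: one must construct the test fields $\varphi$ compatible with the curved geometry of $\Gab$, verify that the resulting pressure bound is strong enough to close the dissipation inequality $Y \lesssim D$ (where $D$ is the sum of viscous and damping dissipations), and handle the commutators created by time-differentiating the variable-coefficient operator $\p_j(a_{j\ell}a_{k\ell}\p_k \cdot)$ without producing terms that cannot be absorbed. Once $Y \lesssim D$ is established, the standard combination $Y' + cD \lesssim Y^{3/2}$ yields $Y' + cY \lesssim Y^{3/2}$, and Gr\"onwall together with $Y(0) \leq \eps$ gives \eqref{EQ13}.
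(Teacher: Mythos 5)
Your plan captures one genuinely key idea: the vacuum bubble boundary condition \eqref{EQ10} is what breaks the circular dependence of $\int_{\Gac}q$ and $\int_{\Gac}w$ in \eqref{two_suspects}. The paper implements this slightly differently---instead of testing the momentum equation against a crafted $\varphi$ supported near $\Gab$, it reads \eqref{EQ10} as a trace identity on $\Gab$ and deduces the Poincar\'e-type bound $\|q\|\les\|v\|_2+\|\nabla q\|$ in \eqref{EQ41}---but the spirit is the same, and for the lowest level either version is sound.

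However, the rest of the plan has gaps that prevent it from closing on a curved domain. First, working only with time-differentiated levels $k=0,1,2$ is not enough. The elliptic estimates you invoke to recover the spatial norms in $Y$ necessarily bring in \emph{tangential} derivatives: on a curved boundary the Stokes bound \eqref{EQ34} gives $\|v\|_2+\|q\|_1\les\|v_t\|+\|\nabla w_t\|+\|\nabla Tw_t\|$, and the $H^3$ versions \eqref{EQ44}, \eqref{EQ48} bring in $\nabla T^2 v$ and $\nabla T^2 w$. If your energy contains only $\partial_t^k v$, $\partial_t^k w$, there is nothing on the left to bound these right-hand sides and the elliptic recovery step cannot close. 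The paper is forced to use the full family $\{\id,T,\partial_t,T\partial_t,T^2,\partial_{tt}\}$; applying $T$ on a curved domain then produces commutators such as $[T,\Delta]$, $[T,N\cdot\nabla]$ (the terms $C_S$ in \eqref{EQ98}), which can only be absorbed against dissipation at a \emph{different} level $S'$. This is precisely what forces the weighted combination $\sum_S\lambda^{-c_S}$ and the constraints \eqref{need3_intro}, \eqref{need1_intro} on the weights, none of which appears in your plan.

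Second, the energy identity you propose (testing $\partial_t^k v$ against $\partial_t^k v$ and $\partial_t^{k+1}w$ against $\partial_t^{k+1}w$) yields $\|\nabla\partial_t^k w\|^2$ only in the pointwise energy, not in the dissipation; the dissipation contains $\|\nabla\partial_t^k v\|^2$ and $\alpha\|\partial_t^{k+1}w\|^2$ only. To place $\|\nabla Sw\|^2$ in the dissipation---which you need for anything like $Y\les D$---you require the equipartition estimate obtained by testing the wave equation against $Sw$ and the fluid equation against the trajectory-level test field $\phi$ of \eqref{EQ84} (see \eqref{EQ83}), weighted by a small parameter $\lambda$. Your plan omits this step entirely.

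Finally, because of the inter-level absorptions, the paper does not arrive at the clean differential inequality $Y'+cY\les Y^{3/2}$. It arrives at an integral inequality on windows $(\tau,t)$ with terms like $\lambda^\alpha(t-\tau)Y(\tau)$ and $\lambda^\kappa(t-\tau)^2\int_\tau^t Y$, and decay is extracted via the bespoke ODE-type Lemma~\ref{L10} rather than Gr\"onwall. The simple pointwise ODE you aim for is not what the structure of the commutators allows.
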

We note that Theorem~\ref{T01} resolves two main challenges of the case of a general curved domain. The first challenge is to control the two quantities \eqref{two_suspects}. This is achieved by the vacuum bubble  $\Omb$ through the boundary condition \eqref{EQac}, which transfers the control of $q$ onto $\nabla v$. This also means that all energy level functionals must be involved in estimating the solution (see \eqref{def_Y_intro} below, where all operators $S$ are involved). This contrasts with \cite{KO}, which did not include $S=\mathrm{id}$ due to the need of the modified state variables, as mentioned above. In particular, the presence of all energy level functionals introduces a number of inter-dependencies between the energy levels, which makes us use weighted estimates between the energy levels (note the weights $\lambda^{-c_S}$ in \eqref{def_Y_intro} below). 

This also brings us to the second main challenge of this work, which arises specifically from considering a general curved domain. For such domains, the use of tangential operators $T_j$ (see \eqref{EQ15a}) results in a number of additional commutator terms (see \eqref{EQ95}--\eqref{EQ98}), where each of them needs to be handled using different energy levels. Consequently, this introduces further restrictions on the weights  of the energy levels (see \eqref{need3_intro}, \eqref{need1_intro}). In fact, different ways of estimating each term lead to different constraints on the weights. Thus, the problem of existence of ``the right estimates'' and of the  weights satisfying the resulting constraints, is the heart of the proof of Theorem~\ref{T01}. This is handled in detail in Section~\ref{sec_total}, see also Remark~\ref{Rem1} for an example of the difficulty.

We now discuss the sketch of the proof of Theorem~\ref{T01}, which exposes more details of the above challenges.

\section{Sketch of the proof of Theorem~\ref{T01}.}\label{sec_sketch}

For each $S\in \{ \id , T,\partial_t, T\partial_t, T^2, \partial_{tt}\} $ we consider two  energy estimates after applying the derivative $S$  to both the fluid and the elastic structure. One of the estimates is applied at the level of the velocity, i.e., multiplying the PDE for $Sv$ by $Sv$ and integrating, and then multiplying the PDE for $Sw$ by $Sw_t$ and integrating. The other one is concerned with the equipartition energy,  and is obtained by multiplying the PDE for $Sv$ by a special test function $\phi$, which is, roughly speaking, at the same  level as the trajectories $\eta$ of the velocity field $v$ (see~\eqref{EQ84}), and integrating, and by multiplying the PDE for $Sw$ by $w$ and integrating. We then  multiply the equipartition estimate by a (small) parameter $\lambda>0$ (to be determined below) and add to the estimate at the level of velocity to obtain a \emph{combined  energy estimate} of the form
\begin{equation}
   \frac{\d}{\d t} E_S(t,\tau) 
   + D_S(t)
   \les
   L_S(t,\tau) 
   + N_S(t,\tau) 
   + C_S(t,\tau)
   \label{EQ94_intro}
  \end{equation}
on  time interval $(\tau ,t)$ for each $S$, see  Section~\ref{sec_combined} for a careful derivation of~\eqref{EQ94_intro}. Here  $E_S$ stands for the \emph{pointwise (in time) energy}, $D_S$ stands for the \emph{dissipation energy}, $L_S$ stands for the \emph{linear terms}, $N_S$ stands for \emph{nonlinear terms}, and $C_S$ stands for the \emph{commutator terms}, see~\eqref{EQ95} for precise definitions. We then combine the energy estimates \eqref{EQ94_intro} as follows.\\

\noindent\texttt{Step 1.} Given any weights $c_S\geq 0$, where $S\in \{ \id , T,\partial_t, T\partial_t, T^2, \partial_{tt}\}$, we define the total weighted  energy of the system as
 \eqnb\label{def_Y_intro}
 Y(t)
    \coloneqq 
     \sum_{S }
      \lambda^{-c_S} \left(
       \|Sv\|_{L^2}^2
       + \|Sw_t\|_{L^2}^2
       + \|Sw\|_{H^1}^2
      \right),
\eqne
where we denoted the sum over all $S\in \{ \id , T,\partial_t, T\partial_t, T^2, \partial_{tt}\}$ by
\[
\sum_{S } \coloneqq \sum_{S \in \{ \id , T,\partial_t, T\partial_t, T^2, \partial_{tt}\}},
\]
a convention we apply throughout the paper, for brevity.\\

Here the operator $T$ denotes any one from a family of smooth vector fields $\{T_1,T_2,T_3\}$ which are defined globally on $\overline{\Omf \cup \Ome}$ and tangential to all boundaries $\Gaf$, $\Gab$, $\Gac$, and~$\Gae$, see~\cite[Theorem~3.1]{JKL} or~\cite{K}. 
They are of the form
  \begin{equation}
   T_j
   =
   \sum_{i=1}^3 b_{ij}(x) \partial_i
   \comma 
   1 \leq j \leq 3
   ,
   \label{EQ15a}
  \end{equation}
where the coefficients $b_{ij}$ are smooth time-independent functions.
Moreover, there exists a globally defined vector field $Y_0$ such that
  \begin{equation}
   \frac{\partial}{\partial x_k}
   = 
   \xi_k(x) Y_0
   + \sum_{j=1}^3 \eta_{jk}(x) T_j
   \comma
   1 \leq k \leq 3
   ,
   \label{EQ15b}
  \end{equation}
on a neighborhood of the boundaries for some smooth coefficients $\xi_k$ and~$\eta_{jk}$. 
By $T^2$ we mean any combination $T_{i_1}T_{i_2}$ where $i_1,i_2 \in \{1,2,3\}$.\\

Furthermore, we emphasize that the total energy \eqref{def_Y_intro} includes the case $S=\mathrm{id}$, and that it does not use any renormalization of the elastic displacement, contrary to the approach of~\cite{KO}. This is possible thanks to the Poincar\'e inequality \eqref{EQ41} for the pressure $q$, which is a consequence of the presence of the vacuum bubble $\Omb $ (recall \eqref{EQ03a}). In particular, this resolves the issue of the duality of the control of the quantities \eqref{two_suspects} described above.\\

\noindent\texttt{Step 2.} Given coefficients $c_S$, for $S\in \{ \id , T,\partial_t, T\partial_t, T^2, \partial_{tt}\}$, we define (in~\eqref{def_abc})  $\alpha, \kappa \in \RR$ , as well as, for technical reasons, a  number $\epsilon \in \RR$ (defined in~\eqref{epsilon}), so that, if 
\eqnb\label{need3_intro}
\epsilon >0 \andand c_{\p_t}>c_{T\p_t }-1,
\eqne
then the energy estimates \eqref{EQ94_intro} multiplied by the weights $\lambda^{-c_S}$, respectively, can be combined into the \emph{total energy estimate},
\eqnb\label{est_apr_intro}
   Y (t)
  + \lambda \int_\tau^t Y 
  \leq
  C(1 + \lambda^\alpha (t-\tau))Y(\tau)
  + C(\lambda^\alpha  + \lambda^{\kappa} (t-\tau )^2  ) \int_\tau^t Y
  + h O(Y)
\eqne
for all sufficiently small $\lambda >0$, and all $\tau \geq 0$, $t>\tau$ such that
\eqnb\label{def_h}
 h(t)
   := 
   \sup_{[0,t)} Y^{1/2}
   + \int_0^t Y^{1/2}
   \eqne
   is sufficiently small.\\

Here $O(Y)$ denotes any term of the form of sums of terms involving any constant, any powers of $\lambda$ and $(t-\tau)$ and at least two factors of the form $Y^{1/2}(\tau )$, $Y^{1/2} (t)$ and $\int_\tau^t Y^{1/2}$.\\

We emphasize that this step, which we verify in detail in Section~\ref{sec_total}, is the most technical part of the paper. In fact, the definitions of $\alpha,\kappa,\epsilon$  reflect the complexity of the system, i.e., the fact that the linear terms $L_S$, nonlinear terms $N_S$ and commutator terms  $C_S$ appearing in the estimates \eqref{EQ94_intro} for a given $S$ can only be handled  by employing the pointwise energy $E_{S'}$ and the dissipation energy $D_{S'}$ for some $S' \ne S$. The interactions appear at every level $S$, which makes the system technically challenging. \\

\noindent\texttt{Step 3.} We prove an ODE-type Lemma~\ref{L10}, which shows that \eqref{est_apr_intro} guarantees exponential decay of $Y(t)$ for sufficiently small $\lambda >0$ and sufficiently small $Y(0)$ if 
\eqnb\label{need1_intro}
\alpha >1, \,\, \kappa >3.
\eqne

The lemma is inspired by \cite[Lemma~3.1]{KO} and is proven in Section~\ref{sec_ode}. \\

\noindent\texttt{Step 4.} We conclude the proof.\\

Namely, it remains to show that there exists a choice of the coefficients $c_S\geq 0$, $S\in \{ \id , T,\partial_t, T\partial_t, T^2, \partial_{tt}\} $ such that \eqref{need3_intro} and \eqref{need1_intro} hold. This turns out to be true, by taking
\eqnb\label{the_choice_intro}
  \begin{aligned}
    c_{\id} &\coloneqq  5, & c_{\p_t} &\coloneqq  20/3,\\
    c_{T} &\coloneqq  5/3, & c_{T\p_t} &\coloneqq  17/3,\\
    c_{T^2} &\coloneqq  0, & c_{\p_{tt}}&\coloneqq  25/3,
  \end{aligned}
  \eqne
  see Section~\ref{sec_total} for a verification. Having fixed the coefficients $c_S$, we finally  fix  $\lambda >0$ to be sufficiently small, so that Step~3 concludes the proof of Theorem~\ref{T01}.

\section{Preliminaries}
\label{sec_prelim}
We will use the short-hand notation
\eqnb\label{shorthand}
\| \cdot \| \coloneqq \| \cdot \|_{L^2 },\quad \| \cdot \|_k \coloneqq \| \cdot \|_{H^k} \quad \text{ for }k\geq 1,
\eqne
where the norms on the right-hand sides are taken on either $\Ome$ or $\Omf$, depending on the context, i.e., depending whether the norm concerns $v$ or~$w$. We let
  \begin{align}
   \begin{split}
    X (t)
    &\coloneqq \sum_{S \in \{\id ,T,\partial_t, T\partial_t, T^2, \partial_{tt}\}}
    (
      \|Sv\|^2
      + \|Sw_t\|^2
      + \|Sw\|^2
    )
   \end{split}
   \label{EQ15}
  \end{align}
 denote the \emph{unweighted total energy} of the fluid-structure system and we denote by
\eqnb\label{def_g}
g(t) \coloneqq \sup_{[0,t)} X^{1/2} + \int_0^t X^{1/2}
\eqne 
 the \emph{total accumulation} of $X^{1/2}$ until time $t>0$. \\

In this section, we prove the following lemma.
\begin{lemma}
\label{L02} Suppose that $(v,w,q,\eta,a)$ is a smooth solution of the fluid-structure interaction system~\eqref{EQ01}--\eqref{EQ11}. For all sufficiently small $\gamma >0$ we have that 
 \eqnb
   \|v\|_{3}^2
   + \|q\|_{2}^2
   + \|v_t\|_{2}^2
   + \|v_{tt}\|^2
   + \|q_t\|_{1}^2
   + \|w\|_{3}^2
   + \|w_t\|_{2}^2 
   + \|w_{tt}\|_{1}^2
   + \|w_{ttt}\|^2
   \les 
   X
   \label{EQ14}
  \eqne
for all $t>0$ such that  $g(t) \leq \gamma$.
\end{lemma}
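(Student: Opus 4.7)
Since $\|v_{tt}\|^2$ and $\|w_{ttt}\|^2$ are contained in $X$ by definition, only the higher-order Sobolev norms on the left-hand side of \eqref{EQ14} need to be recovered. My plan is to upgrade the tangential ($T$, $T^2$) and time ($\partial_t$, $\partial_{tt}$) controls provided by $X$ into full Sobolev regularity using elliptic theory for both the Stokes system (from the Lagrangian Navier--Stokes equations \eqref{EQ06}) and the Laplacian (from the wave equation \eqref{EQ01} for $w$), closing the resulting coupled system of inequalities via the smallness $g(t)\leq \gamma$.

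On the fluid side, I would rewrite \eqref{EQ06} and its time derivative as perturbed Stokes systems of the form
\[
-\Delta v + \nabla q = -v_t + (\text{nonlinear}) + R(v,q,a-I), \qquad \div v = R'(v,a-I),
\]
where $R$, $R'$ collect terms of size $\lesssim g\cdot(\text{higher Sobolev norms})$. Stokes elliptic regularity with mixed boundary conditions ($v=0$ on $\Gaf$, $v=w_t$ on $\Gac$, stress-free on $\Gab$) applied at the $H^2$-level to the $\partial_t$-differentiated system bounds $\|v_t\|_2 + \|q_t\|_1$ in terms of $\|v_{tt}\|$ and traces of $w_{tt}$ on $\Gac$; at the $H^3$-level on the original system, it bounds $\|v\|_3 + \|q\|_2$ in terms of $\|v_t\|_1$ and traces of $w_t$ on $\Gac$. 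On the elastic side, the identities $\Delta w = w_{tt} + \alpha w_t$ and $\Delta w_t = w_{ttt} + \alpha w_{tt}$, together with $w=0$ on $\Gae$ and the stress condition \eqref{EQ09} on $\Gac$ (which provides Neumann data $\partial_N w$), yield, via elliptic regularity for the Laplacian, bounds of the form
\[
\|w\|_3 \lesssim \|w_{tt}\|_1 + \|w_t\|_1 + \|(\nabla v - qI)N\|_{H^{3/2}(\Gac)},\qquad \|w_t\|_2 \lesssim X^{1/2} + \|(\nabla v_t - q_tI)N\|_{H^{1/2}(\Gac)}.
\]
The norm $\|w_{tt}\|_1$ is then recovered from the algebraic identity $w_{tt} = \Delta w - \alpha w_t$, giving $\|w_{tt}\|_1 \lesssim \|w\|_3 + \|w_t\|_1$.

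The coupling through $\Gac$ produces a system of inequalities among the target norms in which the boundary traces on each side must be reinterpreted via the kinematic identity $v = w_t$ (hence $v_t = w_{tt}$) and the stress condition \eqref{EQ09}. I would close the system by bootstrapping through the regularity levels, bounding nonlinear and trace remainders by Sobolev embeddings and using $g\leq \gamma$ to absorb the perturbative terms arising from $a-I$, from the convective nonlinearities, and from time derivatives of $a$.

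The main obstacle is breaking the apparent circularity at the top regularity: the Stokes bound on $\|v\|_3$ asks for boundary data $w_t|_{\Gac}$ in $H^{5/2}(\Gac)$, which by trace would involve $\|w_t\|_3$, a quantity absent from the target list; an analogous issue arises for $\|w\|_3$ via the Laplacian, where the Neumann data $\partial_N w$ depends on $\|v\|_3 + \|q\|_2$. Resolving this requires a careful choice, level by level, of which boundary condition (kinematic vs.\ stress) feeds into which elliptic estimate, so that the cross-coupled terms can either be controlled by already-estimated quantities or absorbed using the smallness of $g$.
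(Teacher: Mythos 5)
Your high-level plan — upgrade the tangential/time controls packaged in $X$ into full Sobolev regularity via elliptic estimates, then close using smallness of $g$ — matches the paper, and you have correctly identified the circularity at the top order. But your specific route differs from the paper in two crucial ways, and those differences are precisely where the proposal does not close.

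First, the elliptic estimates for $w$. You propose feeding the stress condition~\eqref{EQ09} in as Neumann data, e.g.\ $\|w\|_{3}\lesssim \|w_{tt}\|_{1}+\|w_{t}\|_{1}+\|(\nabla v-qI)N\|_{H^{3/2}(\Gac)}$. That trace norm already requires $\|v\|_{3}+\|q\|_{2}$, and your $\|w_t\|_2$ bound similarly requires $\|v_t\|_2+\|q_t\|_1$ — exactly the quantities you are trying to produce, so the system of inequalities does not close no matter how you order the levels. The paper breaks this by using the \emph{Dirichlet} problem for $w$ on $\Gac$: since $w,w_t$ vanish on $\Gae$, one bounds the Dirichlet trace $\|w\|_{H^{k+1/2}(\Gac)}$ by $\|\nabla w\|+\|\nabla T^{k}w\|$ (tangential operators commute with restriction to the boundary and Poincar\'e applies), and these are already in $X$. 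This gives~\eqref{EQ43}--\eqref{EQ45} directly, with no reference to $v$ or $q$. The same trick is used for the fluid at top order: the $H^3$ Stokes estimate~\eqref{EQ48} uses the Dirichlet data $v=w_t$ on $\Gac$ and bounds $\|v\|_{H^{5/2}(\Gac)}$ by $\|v\|_1+\|\nabla T^2v\|$, which is then chained to a tangential $H^2$ estimate~\eqref{EQ59} rather than to a Neumann problem that would reintroduce $\|w\|_3$ at full strength.

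Second, and more fundamentally, the proposal never addresses the pressure normalization, which is the only place the vacuum bubble enters this lemma and the reason the result holds on a curved domain. The Stokes estimates only control $\|\nabla q\|$ (or $\|q\|_1$ up to a constant ambiguity). The paper's key new ingredient is the Poincar\'e-type inequality~\eqref{EQ35a}/\eqref{EQ41}, $\|q\|\lesssim \|v\|_2+\|\nabla q\|$, obtained by rewriting the free-surface condition on $\Gab$ as $qN_i = (\delta_{ji}-a_{ji})N_j q + a_{j\ell}a_{k\ell}\partial_k v_i N_j$~\eqref{EQ37} and absorbing the $I-a$ contribution using smallness. Without this step $\|q\|_{2}$, $\|q_t\|_1$ and hence the whole chain cannot be controlled by $X$. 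Any successful proof of~\eqref{EQ14} must contain a version of this argument; yours should make it explicit.
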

Before proving the lemma, we provide some important estimates on the Lagrangian map~$\eta$. 
First, note that $\eta_t = v$, so that 
  \begin{equation}
   \eta(x,t) 
   = 
   \eta(x,0)
   +\int_0^t \eta_t(x,s)\, \d s
   =
   x 
   + \int_0^t v(x,s) \, \d s
   .
   \label{EQ16}
  \end{equation}
By incompressibility, $a = (\nabla \eta)^{-1}$ is also its own cofactor matrix.
Therefore,
  \begin{equation}
   a_{i j} 
   = 
   \frac{1}{2} \eps_{i m n} \eps_{i k\ell} \partial_m \eta_k \partial_n \eta_\ell
   .
   \label{EQ19}
  \end{equation}
We thus have the following statements, the proofs of which are straightforward and can be found in~\cite[Section~2.1]{KO}.
\begin{proposition}
\label{P01}
We have 
  \begin{equation}
   \|I - \nabla \eta\|_{2}
   + \|D^2 \eta\|_{1}
   \les 
   \|v\|_{L^1((0,t); H^3)}
   \label{EQ17}
  \end{equation}
and
  \begin{align}
   \|a_t\|_{2}
   &\les 
   \|v\|_{L^\infty ((0,t); H^3)} (1 + \|v\|_{L^1((0,t); H^3)}),
   \label{EQ20}
   \\
   \|a_{tt}\|_{1}
   &\les
   \|\nabla \eta\|_{2} \|v_t\|_{2}
   + \|v\|_{3}^2
   .
   \label{EQ21}
  \end{align}
Also
  \begin{equation}
   \|I - a\|_{2}
   \les 
   \|v\|_{L^1((0,t); H^3)} (1 + \|v\|_{L^1((0,t); H^3)})
   \label{EQ25}
  \end{equation}
and 
  \begin{equation}
   \|I - aa^T\|_{2}
   \les
   \|I - a\|_{2} (1 + \|I - a\|_{2})
   .
   \label{EQ26}
  \end{equation}
\end{proposition}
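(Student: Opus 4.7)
The plan is to derive all five estimates from the Lagrangian identity $\eta(x,t) = x + \int_0^t v(x,s)\,\d s$ in \eqref{EQ16}, the cofactor representation \eqref{EQ19}, and the evolution identity for $a$ obtained from $a\,\nabla\eta = I$, exploiting that $H^2$ is a Banach algebra on the bounded three-dimensional domain (and that $H^1\cdot H^2 \hookrightarrow H^1$ there).

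For \eqref{EQ17}, I would apply $\nabla$ and $D^2$ to \eqref{EQ16} to obtain $\nabla\eta - I = \int_0^t \nabla v(\cdot,s)\,\d s$ and $D^2\eta = \int_0^t D^2 v(\cdot,s)\,\d s$; Minkowski's integral inequality in $H^2$ and $H^1$ respectively then gives the stated bound by $\|v\|_{L^1((0,t);H^3)}$. For \eqref{EQ25} I would write $\nabla\eta = I + B$ with $B = \nabla\eta - I$ and plug into \eqref{EQ19}. Since $a = I$ when $B = 0$, the resulting expression for $I-a$ is a polynomial in $B$ with no constant term and of total degree at most two in $B$; the $H^2$-algebra property then yields $\|I - a\|_{2} \lesssim \|B\|_2 + \|B\|_2^2$, which combined with \eqref{EQ17} gives \eqref{EQ25}. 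The estimate \eqref{EQ26} follows from the purely algebraic identity $I - aa^T = (I-a) + (I-a)^T - (I-a)(I-a)^T$ and one more application of $H^2$ algebra.

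For the time derivatives, I would differentiate $a\,\nabla\eta = I$ in $t$ to obtain $a_t = -a\,(\nabla v)\,a$; the $H^2$-algebra bound gives $\|a_t\|_{2} \lesssim \|a\|_{2}^2 \|v\|_{3}$, and under the smallness hypothesis $g(t) \leq \gamma$ one has $\|a\|_{2} \lesssim 1 + \|I-a\|_{2} \lesssim 1 + \|v\|_{L^1((0,t);H^3)}$ via \eqref{EQ25}. This yields \eqref{EQ20}. Differentiating once more produces $a_{tt} = -a_t(\nabla v)a - a(\nabla v_t)a - a(\nabla v)a_t$. The middle term is estimated in $H^1$ by $\|a\|_{2}^2 \|v_t\|_{2} \lesssim \|\nabla\eta\|_{2} \|v_t\|_{2}$, while the two outer terms are bounded using $\|fgh\|_{1} \lesssim \|f\|_{2}\|g\|_{2}\|h\|_{1}$ (or similar) together with the previous bound on $\|a_t\|_{2}$ and smallness of $\|a\|_{2}$, giving contributions of order $\|v\|_{3}^2$. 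Summing yields \eqref{EQ21}.

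The main obstacle is not analytic but bookkeeping: one must confirm in each case that higher-order terms produced by the $H^2$-algebra estimates (e.g.\ $\|B\|_2^2$, $\|a\|_2^3\|v\|_3^2$) are absorbed into the stated bounds under the smallness hypothesis $g(t)\leq \gamma$, since the right-hand sides of \eqref{EQ20}--\eqref{EQ21} and \eqref{EQ25} are only linear or affine in the controlling quantity up to a single product. No conceptually new ingredient is required beyond the $H^2$-algebra and Minkowski's integral inequality.
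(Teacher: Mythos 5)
Your treatment of \eqref{EQ17}, \eqref{EQ25}, and \eqref{EQ26} is correct and matches the natural route: \eqref{EQ17} by applying $\nabla$ and $D^2$ to \eqref{EQ16} and using Minkowski in $H^2$ and $H^1$; \eqref{EQ25} by writing $\nabla\eta = I + B$ in the cofactor formula \eqref{EQ19} and using that $I - a$ is a polynomial of degree two in $B$ with no constant term; and \eqref{EQ26} via the identity $I - aa^T = (I-a) + (I-a)^T - (I-a)(I-a)^T$ and the $H^2$-algebra.

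For \eqref{EQ20} and \eqref{EQ21}, however, your route through $a_t = -a(\nabla v)a$ deviates from the intended one and does not quite deliver the stated unconditional bounds. The paper (referring to~\cite[Section~2.1]{KO}) obtains $a_t$ and $a_{tt}$ by differentiating the cofactor representation \eqref{EQ19} in time: since incompressibility makes $a$ an exactly quadratic polynomial in $\nabla\eta$, one gets $\partial_t a_{ij} = \eps_{jmn}\eps_{ik\ell}\partial_m v_k\,\partial_n\eta_\ell$ and $\partial_{tt}a_{ij} = \eps_{jmn}\eps_{ik\ell}(\partial_m \partial_t v_k\,\partial_n\eta_\ell + \partial_m v_k\,\partial_n v_\ell)$, so $a_t$ is bilinear in $(\nabla v,\nabla\eta)$ and $a_{tt}$ is a sum of $\nabla v_t\cdot\nabla\eta$ and $\nabla v\cdot\nabla v$; the $H^2$ algebra then gives $\|a_t\|_2\les\|v\|_3\|\nabla\eta\|_2\les\|v\|_{L^\infty H^3}(1+\|v\|_{L^1 H^3})$ and $\|a_{tt}\|_1\les\|\nabla\eta\|_2\|v_t\|_2+\|v\|_3^2$ with no extraneous factors. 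By contrast, $a_t=-a(\nabla v)a$ yields $\|a_t\|_2\les\|a\|_2^2\|v\|_3$, and $\|a\|_2\les(1+\|I-a\|_2)$ with $\|I-a\|_2$ bounded via \eqref{EQ25}; so without a smallness hypothesis you obtain an estimate of the form $\|v\|_{L^\infty H^3}\bigl(1+\|v\|_{L^1 H^3}(1+\|v\|_{L^1 H^3})\bigr)^2$, which is strictly weaker than \eqref{EQ20}. Likewise your bound $\|a\|_2^2\les\|\nabla\eta\|_2$ in the derivation of \eqref{EQ21} only holds because both sides are $\approx 1$ under smallness, not as a general inequality. Since Proposition~\ref{P01} is stated without a smallness hypothesis (smallness only enters later, at \eqref{EQ29}--\eqref{EQ30z}), this is a genuine, if minor, gap: you should differentiate the cofactor formula \eqref{EQ19} directly rather than the identity $a\nabla\eta=I$, which removes the unwanted factors of $a$ and gives \eqref{EQ20}--\eqref{EQ21} unconditionally.
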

In particular, Proposition~\ref{P01} implies that
\eqnb\label{i-a}
\| I-a \|_2, \| I - \nabla \eta \|_2 \les  g,
\eqne
  \begin{align}
   \begin{split}
    \|\partial_t ((I-a)q)\|_{1} 
    &\les 
    \|a_t\|_{2} \|q\|_{2}
    + \|I-a\|_{2} \|q_t\|_{1}
    \les 
    gX^{1/2}
    ,
    \\
    \|\partial_t ((I-aa^T)\nabla v)\|_{1}
    &\les
    \|a_t\|_{2}\|a\|_{2} \|v\|_{3}
    + \|I-aa^T\|_{2}\|v_t\|_{2}
    \les
    gX^{1/2}
    ,\\
    \|a_t\|_{H^2}, \|I-aa^T\|_{H^2} &\les g
   \end{split}
   \label{EQ102a}
  \end{align}
and 
  \begin{align}
   \begin{split}
    \|\partial_t((I-a)\nabla v)\|_{1}
    &\les
    \|a_t\|_{3} \|v\|_{3}
    + \|I-a\|_{2}\|v_t\|_{2}
    \les 
    X + g X^{1/2}
    ,
    \\
    \|\partial_{tt}((I-a)\nabla v)\|
    &\les
    \|I-a\|_{2}\|\nabla v_{tt}\|
    + \|a_t\|_{2}\|v_t\|_{1}
    + \|a_{tt}\|\,\|v\|_{3}
    \\&\les 
    g\|\nabla v_{tt}\|
    + X
    .
   \end{split}
   \label{EQ102b}
  \end{align}

Next, we state the standard $H^2$ and $H^3$ estimates for the Stokes system.

\begin{lemma}
Let $(u,p)$ be a solution of the problem
  \begin{align}
   \begin{split}
    -\Delta u + \nabla p 
    &= 
    f 
    \inon{in $\Omf$, }
    \\
    \nabla \cdot u 
    &= 
    k 
    \inon{in $\Omf$,}
    \\
    u 
    &= 
    h_1 
    \inon{on  $\Gaf$,}
    \\
    N_j \partial_j u + Np 
    &= 
    h_2  
    \inon{on $\Gac$,}
    \\
    N_j \partial_ju + Np 
    &= 
    h_3 
    \inon{on $\Gab$.}
   \end{split}
   \label{EQ30b}
  \end{align}
Then,
  \begin{equation}
   \|u\|_{2}
   + \|p\|_{1}
   \les
   \|f\| 
   + \|k\|_{1}
   + \|h_1\|_{H^{3/2}(\Gaf)}
   + \|h_2\|_{H^{1/2}(\Gac)}
   + \|h_3\|_{H^{1/2}(\Gab)}
   .
   \label{EQ30c}
  \end{equation}
If the boundary condition on $\Gac$ is replaced by $u = r$,  then we have 
  \begin{equation}
   \|u\|_{2}
   + \|\nabla p\|
   \les
   \|f\|
   + \|k\|_{1}
   + \|h_1\|_{H^{3/2}(\Gaf)}
   + \|r\|_{H^{3/2}(\Gac)}
   + \|h_3\|_{H^{1/2}(\Gab)}
   \label{EQ30d}
  \end{equation}
and 
  \begin{equation}
   \|u\|_{3}
   + \|\nabla p\|_{1}
   \les 
   \|f\|_{1}
   + \|k\|_{2}
   + \|h_1\|_{H^{5/2}(\Gaf)}
   + \|r\|_{H^{5/2}(\Gac)}
   + \|h_3\|_{H^{3/2}(\Gab)}
   .
   \label{EQ30e}
  \end{equation}
\end{lemma}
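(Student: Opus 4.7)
The plan is to reduce each of the three estimates to a homogeneous divergence-free Stokes problem and then invoke the classical Agmon--Douglis--Nirenberg (ADN) elliptic regularity theory for the Stokes operator with mixed Dirichlet / stress boundary conditions. Since $\Gaf$, $\Gac$, and $\Gab$ are pairwise disjoint and smooth, localization, boundary flattening, and partition-of-unity arguments make this reduction routine, and all three estimates should follow from the same scheme applied at different Sobolev scales and with different placements of the stress boundary.

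Concretely, I would first handle the divergence constraint by a Bogovskii-type right inverse of $\nabla\cdot$, producing $U_0$ with $\nabla\cdot U_0=k$ in $\Omf$, vanishing trace on all of $\partial\Omf$, and $\|U_0\|_{H^{j+2}}\lesssim\|k\|_{H^{j+1}}$ for $j=0,1$. Next I would absorb the Dirichlet data by a standard trace inverse $U_1$: the data $h_1$ on $\Gaf$ in all three estimates, together with $r$ on $\Gac$ for \eqref{EQ30d}--\eqref{EQ30e}, with $\|U_1\|_{H^{j+2}}$ controlled by the corresponding $H^{j+3/2}$ trace norms. Writing $\tilde u=u-U_0-U_1$ converts the problem into a Stokes system for $(\tilde u,p)$ which is divergence-free, has homogeneous Dirichlet trace on the Dirichlet portion of $\partial\Omf$, prescribed stress data $\tilde h_2,\tilde h_3$ on the remaining components, and a modified right-hand side $\tilde f$, with all the new data norms controlled by the right-hand side of the claimed inequality. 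Verifying the Lopatinskii--Shapiro complementing condition for the no-slip and stress-free operators against the Stokes symbol on a half-space is standard; ADN then yields
\begin{equation*}
 \|\tilde u\|_{H^{j+2}} + \|p-c\|_{H^{j+1}}\lesssim \|\tilde f\|_{H^j} + \|\tilde h_2\|_{H^{j+1/2}(\Gac)} + \|\tilde h_3\|_{H^{j+1/2}(\Gab)},\qquad j=0,1,
\end{equation*}
for an arbitrary additive pressure constant $c\in\RR$.

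In \eqref{EQ30c}, stress conditions are imposed on both $\Gac$ and $\Gab$, so the boundary equation forces $c=0$ and undoing the lifts directly produces the full $\|p\|_1$ bound. In \eqref{EQ30d}--\eqref{EQ30e}, stress is imposed only on $\Gab$: the pressure is still uniquely determined through that single stress condition, but since the Dirichlet portion dominates the boundary, the ADN estimate balances naturally only at the level of $\nabla p$, which is exactly why the left-hand sides of \eqref{EQ30d} and \eqref{EQ30e} involve $\|\nabla p\|$ and $\|\nabla p\|_1$ rather than the full $H^1$- or $H^2$-norms of $p$. The main technical point I would need to be careful about is the compatibility of the lifts $U_0$ and $U_1$ at the $H^3$ scale: $U_0$ must vanish on every boundary component so that it does not spoil the Dirichlet trace of $\tilde u$, while $U_1$ must preserve the $H^{j+3/2}$ regularity of $\nabla\tilde u\cdot N$ needed to read off the stress condition. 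Because the boundary components are smooth and disjoint this compatibility is standard, and I do not expect a genuinely new difficulty --- the lemma is a classical elliptic regularity statement, and all of its content lies in the correct identification of the data norms and in the case distinction between stress-rich \eqref{EQ30c} and Dirichlet-dominant \eqref{EQ30d}--\eqref{EQ30e}.
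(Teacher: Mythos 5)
The paper does not actually prove this lemma: it is quoted as a known result, with the reader referred to \cite[Proposition~4.2]{LT} and to \cite{G,GS,S}, where such mixed Dirichlet/Neumann Stokes estimates are established by potential-theoretic and pseudodifferential methods. Your ADN-style outline (localization, flattening, verification of the Lopatinskii--Shapiro condition for the no-slip and stress operators against the Stokes symbol, followed by lifting of the divergence and Dirichlet data) is the standard alternative route to the same estimates and is in substance what those references deliver, so as a blind reconstruction it is a reasonable proof strategy rather than a departure from the paper.

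One concrete step in your reduction does not work as written. You construct $U_0$ with $\nabla\cdot U_0=k$ in $\Omf$ and \emph{vanishing trace on all of} $\partial\Omf$; by the divergence theorem this forces $\int_{\Omf}k=0$, which is not assumed (and in the paper's application $k=(\delta_{ki}-a_{ki})\partial_k v_i$ has no reason to have zero mean). The fix is standard but should be stated: require $U_0$ to vanish only on the Dirichlet portion of the boundary ($\Gaf$ for \eqref{EQ30c}, $\Gaf\cup\Gac$ for \eqref{EQ30d}--\eqref{EQ30e}), letting it carry a nonzero normal trace on the stress portion $\Gab$ (or on $\Gab\cup\Gac$), where the resulting modification of the stress data is harmless; since a non-Dirichlet boundary component is present in all three cases, no compatibility condition on $k$ is needed. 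With that correction, and with the observation you already make --- that the presence of a stress condition on part of the boundary removes the additive pressure constant, yielding $\|p\|_1$ in \eqref{EQ30c} but only $\|\nabla p\|$ in the Dirichlet-dominant cases (the paper recovers $\|q\|_{L^2}$ separately in \eqref{EQ35a}--\eqref{EQ41} using the $\Gab$ boundary condition) --- your argument is sound.
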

\noindent

For a proof of this lemma, see~\cite[Proposition~4.2]{LT} and also~\cite{G,GS,S}.

\begin{proof}[Proof of Lemma~\ref{L02}]
We first note that the assumption gives us 
  \begin{align}
   \begin{split}
    &\Big( 
      \|v\|_{L^1((0,t);H^3)}
      + \|v\|_{L^\infty((0,t);H^3)}
      + \|\nabla q\|_{L^\infty((0,t);H^1)}
    \\&\indeq\indeq\indeq\indeq
      +\|v_t\|_{L^1((0,t);H^2)}
      + \|q_t\|_{L^1((0,t);H^1)}
    \Big)
    (1+ \|v\|_{L^1((0,t);H^3)})
    \leq 
    \gamma
   \end{split}
   \label{EQ30z}
  \end{align}
if $\gamma > 0$ is sufficiently small. Indeed, \eqref{EQ30z} is valid for small times by continuity, and, provided $g(t)\leq \gamma$, the claim of the lemma and a continuity argument guarantees that \eqref{EQ30z} can be extended up to $(0,t)$. 
By Proposition~\ref{P01}, we also get
  \begin{equation}
   \|a_t\|_{2}
   + \|I-a\|_{L^\infty}
   + \|I - a a^T\|_{L^\infty}
   + \|I - a\|_{2}
   + \|I - aa^T\|_{2}
   + \|I - \nabla \eta\|_{2}
   + \|D^2 \eta\|_{1}
   \les 
   \gamma
   .
   \label{EQ29}
  \end{equation}
We rewrite our fluid system as 
  \begin{align}
   \begin{split}
    -\Delta v_i
    + \partial_i q
    &= 
    - \partial_t v_i
    - \partial_j ((\delta_{jk} - a_{j \ell} a_{k \ell}) \partial_k v_i)
    + \partial_k \left((\delta_{k i} - a_{k i})\left( q - \int_{\Omf} q \right) \right)
    ,
    \\
    \div v 
    &= 
    (\delta_{k i} - a_{k i}) \partial_k v_i
    .
   \end{split}
   \label{EQ31}
  \end{align}
For the boundary conditions, we have that $v = 0$ on $\Gaf$, along with 
  \begin{align}
   \begin{split}
    N_k \partial_k v_i 
    - N_i q
    &=
    N_j \partial_j w_i
    + (\delta_{j k} - a_{j \ell} a_{k \ell}) \partial_k v_i N_j
    - (\delta_{j i} - a_{j i})q N_j
    \inon{on $\Gac$}
    \comma i=1,2,3,
    \\
    N_k \partial_k v_i
    - N_i q
    &= 
    (\delta_{j k} - a_{j \ell} a_{k \ell}) \partial_k v_i N_j
    - (\delta_{j i} - a_{j i})q N_j
    \inon{on $\Gab$}
    \comma i=1,2,3,
   \end{split}
   \label{EQ32}
  \end{align}
where in the first line, the normal vector is with respect to $\Ome$, and in the second line, it is with respect to~$\Omf$. 

The $H^2$ estimate given by~\eqref{EQ30c} applied to~\eqref{EQ31}--\eqref{EQ32} gives
  \begin{align}
   \begin{split}
    \|v\|_{2}
    + \|\nabla q\|
    &\les 
    \|v_t\|
    + \|\nabla ((I-aa^T)\nabla v)\|
    + \left\| (I-a) \nabla \left(q - \int_{\Omf} q\right) \right\|
    \\&\indeq
    + \|v\|_{H^{3/2}(\Gac)}
    + \|(I-a) \nabla v\|_{H^{1/2}(\Gab)}
    + \|(I-a)q\|_{H^{1/2}(\Gab)}
    \\&\les
    \|v_t\|
    + \|v\|_{H^{3/2}(\Gac)}
    + \gamma (
      \|v\|_{H^2}
      + \|q\|_1
      )
    .
   \end{split}
   \label{EQ33}
  \end{align}
Furthermore, we can show that 
  \begin{equation}
   \|q\|
   \les 
   \|v\|_{2}
   + \|\nabla q \|
   .
   \label{EQ35a}
  \end{equation}
Indeed, note that 
  \begin{equation}
   \|q\|
   \les 
   \left\| q - \int_{\Gab}q \right\|
   + \int_{\Gab} |q|
   \les 
   \|\nabla q\|
   + \int_{\Gab} |q|
   ,
   \label{EQ36}
  \end{equation}
and that the boundary condition~\eqref{EQ10} may be rewritten as
  \begin{equation}
   qN_i
   =
   (\delta_{ji} - a_{j i})N_j q 
   + a_{j\ell} a_{k \ell} \partial_k v_i N_j
   \inon{on $\Gab$.}
   \label{EQ37}
  \end{equation}
Hence, 
  \begin{equation}
    \int_{\Gab} |q|
    = 
    \int_{\Gab} |qN|
    \les
    \int_{\Gamma_b} |I - a| |q|
    + \int_{\Gab} |aa^T| |\nabla v|
    \les 
    \|I-a\|_{2} \int_{\Gab} |q|
    + \|aa^T\|_{2}\|\nabla v\|_{1}
    ,
   \label{EQ38}
  \end{equation}
where we used the embedding $H^2(\Omf) \hookrightarrow C^{0,1/2}(\overline{\Omf})$ in the last line.
Next, note that $\|I - a\|_{2} < \gamma$ and 
  \begin{equation}
   \|aa^T\|_{2}
   \leq
   \|I - aa^T\|_{2} + \|I\|_{2}
   .
   \label{EQ39}
  \end{equation}
Thus we obtain
  \begin{equation}
   \int_{\Gab}|q|
   \les
   \gamma \int_{\Gab} |q|
   + (\gamma +1) \|v\|_{2}
   ,
   \label{EQ40}
  \end{equation}
and after absorbing the first term on the right-hand side, we get 
  \begin{equation}
   \|q\|
   \les 
   \|v\|_{2}
   + \|\nabla q\|
   ,
   \label{EQ41}
  \end{equation}
completing the proof of~\eqref{EQ35a}.

Using~\eqref{EQ35a} in~\eqref{EQ33} to replace $\|\nabla q\|$ with $\|q\|_1$, we absorb the term with $\gamma$ into the left-hand side to get
  \begin{align}
   \begin{split}
    \|v\|_{2}
    + \|q\|_1
    &\les 
    \|v_t\|
    + \|v\|_{H^{3/2}(\Gac)}
    \les
    \|v_t\|
    + \|v\|_{H^{1/2}(\Gac)}
    + \|T v\|_{H^{1/2}(\Gac)}
    \\&\les
    \|v_t\|
    + \|w_t\|_{1}
    + \|Tw_t\|_{1}
    \les
    \|v_t\|
    + \|\nabla w_t\|
    + \|\nabla Tw_t\|
    ,
   \end{split}
   \label{EQ34}
  \end{align}
where we used Poincar\'e's inequality in the last line, as $w = w_t  =0$ on~$\Gae$. 
We also used the fact that the tangential operators commute with the restriction. 
From here, we obtain
  \begin{equation}
   \|v\|_{2}
   + \|q\|_{1}
   \les 
   X
   .
   \label{EQ42}
  \end{equation}

Now, we list some elliptic estimates for $w$ in $H^2$ and~$H^3$. 
Recall that $w$ satisfies the equation
  \begin{equation}
   -\Delta w 
   = 
   -\alpha w_t
   - w_{tt}
   \inon{in $\Ome \times (0,T)$.}
   \label{EQ43eq}
  \end{equation}
Considering the $H^2$ Dirichlet estimate, we have 
  \begin{equation}
   \|w\|_{2}
   \les 
   \|w_t\|
   + \|w_{tt}\|
   + \|w\|_{H^{3/2}(\Gamma_c)}
   \les 
   \|w_t\|
   + \|w_{tt}\|
   + \|\nabla w\|
   + \|\nabla T w\|
   ,
   \label{EQ43}
  \end{equation}
where we used Poincar\'e's inequality.
Additionally, we get the following  $H^3$ estimate:
  \begin{equation}
   \|w\|_{3}
   \les 
   \|w_t\|_{1}
   + \|w_{tt}\|_{1}
   + \|w\|_{H^{5/2}(\Gamma_c)}
   \les 
   \|w_t\|_{1}
   + \|w_{tt}\|_{1}
   + \|\nabla w\|
   + \|\nabla T^2 w\|
   .
   \label{EQ44}
  \end{equation}
Also, taking a time-derivative of the elliptic system~\eqref{EQ43} gives $-\Delta w_t = -\alpha w_{tt} - w_{ttt}$.
The $H^2$ Dirichlet estimate then gives us 
  \begin{equation}
   \|w_t\|_{2}
   \les
   \|w_{ttt}\|
   + \|w_{tt}\|
   + \|w_t\|_{H^{3/2}(\Gamma_c)}
   \les
   \|w_{ttt}\|
   + \|w_{tt}\|
   + \|v\|_{2}
   .
   \label{EQ45}
  \end{equation}
From the inequalities~\eqref{EQ43}--\eqref{EQ45}, we surmise that 
  \begin{equation}
   \|w\|_{3}^2
   \les
   X
   .
   \label{EQ46}
  \end{equation}

Next, we derive an $H^3$ estimate for~$v$. 
From the system~\eqref{EQ31}--\eqref{EQ32} and the Dirichlet estimate~\eqref{EQ30e}, we acquire
  \begin{align}
   \begin{split}
    \|v\|_{3}
    + \|q\|_{2}
    &\les 
    \|v_t\|_{1}
    + \|\nabla ((I - aa^T) \nabla v)\|_{1}
    + \left\| (I-a) \nabla \left( q - \int_{\Omf} q \right) \right\|_{1}
    \\&\indeq
    + \|(I-a) \nabla v\|_{2}
    + \|v\|_{H^{5/2}(\Gac)}
    + \|(I-a) \nabla v\|_{H^{3/2}(\Gab)}
    + \|(I - a)q\|_{H^{3/2}(\Gab)}
    \\&\les
    \|v_t\|_{1}
    + \|v\|_{H^{5/2}(\Gac)}
    + \gamma (
      \|v\|_{3}
      + \|q\|_{2}
      )
    ,
   \end{split}
   \label{EQ47}
  \end{align}
where we used~\eqref{EQ35a} to get $\|q\|_2$ on the left-hand side.
Next, we absorb the $\gamma$ term into the left-hand side to get
  \begin{equation}
   \|v\|_{3}
   + \|q\|_{2}
   \les 
   \|v_t\|_{1}
   + \|v\|_{H^{5/2}(\Gac)}
   \les
   \|v_t\|_{1}
   + \|v\|_{1}
   + \|\nabla T^2v\|
   .
   \label{EQ48}
  \end{equation}

Next, we control $Tv$ in~$H^2$.
Applying a first order tangential operator $T$ to~\eqref{EQ31}--\eqref{EQ32} and tracking the commutator terms, we obtain the system
  \begin{align}
   \begin{split}
    -\Delta T v_i
    + \partial_i Tq
    &= 
    - T\partial_t v_i 
    - T\partial_j((\delta_{jk} - a_{jk}) \partial_k v_i)
    + T\partial_k \left( (\delta_{ki} - a_{ki} )\left(q - \int_{\Omf} q\right)\right)
    \\&\indeq
    + (T\Delta v_i - \Delta Tv_i)
    - (T\partial_i q - \partial_i Tq)
    ,
    \\
    \div Tv 
    &=
    T((\delta_{ki} - a_{ki})\partial_k v_i) 
    - (T \div v - \div Tv)
    ,
   \end{split}
   \label{EQ50}
  \end{align}
with the boundary conditions
  \begin{align}
   \begin{split}
    N_k \partial_k Tv_i 
    - N_i Tq
    &=
    T(N_j \partial_j w_i)
    + T((\delta_{k i} - a_{j \ell} a_{k\ell} ) \partial_k v_i N_j)
    - T((\delta_{j i} - a_{ji})q N_j)
    \\&\indeq
    - (T(N_k \partial_k v_i) - N_k \partial_k Tv_i)
    + (T(N_i q) - N_i Tq)
    \inon{on $\Gac$,}
    \\
    N_k \partial_k Tv_i - N_i T q
    &= 
    T((\delta_{kj} - a_{kj})\partial_k v_i N_j)
    - (T(N_k \partial_k v_i) - N_k \partial_k Tv_i)
    + (T(N_i q) - N_i Tq)
    \inon{on $\Gab$.}
   \end{split}
   \label{EQ51}
  \end{align}
From the Neumann estimate~\eqref{EQ30d}, we then have
  \begin{align}
   \begin{split}
    \|Tv\|_{2}
    + \|\nabla Tq\|
    &\les
    \|Tv_t\|
    + \|T\nabla((I-aa^T) \nabla v)\|
    + \left\|T \left( ( I-a) \nabla \left(q - \int_{\Omf} q\right)\right)\right\|
    \\&\indeq\indeq
    + \|T((I-a)\nabla v)\|_{1}
    + \|w\|_{3}
    + K_{\text f}
    + K_{\text b}
    + K_{\text c}
    ,
   \end{split}
   \label{EQ52}
  \end{align}
where 
  \begin{align}
   \begin{split}
    K_{\text f}
    &=
    \|T \Delta v - \Delta Tv\|
    + \|T \nabla q - \nabla T q\|
    + \|T \div{v} - \div{Tv}\|_{1},
    \\
    K_{\text b}
    &=
    \|T((I-a)\nabla v N)\|_{H^{1/2}(\Gab)}
    + \|T(N\cdot \nabla v) - N \cdot \nabla Tv\|_{H^{1/2}(\Gab)}
    + \|T (Nq) - NTq\|_{H^{1/2}(\Gab)},
    \\
    K_{\text c}
    &= 
    \|T((I - aa^T)\nabla v N)\|_{H^{1/2}(\Gac)}
    + \|T((I - a)qN)\|_{H^{1/2}(\Gac)}
    + \|T(N \cdot \nabla v) - N \cdot \nabla Tv\|_{H^{1/2}(\Gac)}
    \\&\indeq
    + \|T(Nq) - N Tq\|_{H^{1/2}(\Gac)}.
   \end{split}
   \label{EQ53}
  \end{align}
Consequently,
  \begin{equation}
   K_{\text f}
   + K_{\text b}
   + K_{\text c}
   \les
   \|v\|_{2}
   + \|q\|_{1}
   + \gamma (\|v\|_{3} + \|q\|_{2}),
   \label{EQ54}
  \end{equation}
where we used $\|v\|_{2} + \|q\|_{1} \les \gamma$.
Summarizing, we obtain
  \begin{equation}
   \|Tv\|_{2}
   + \|\nabla Tq\|
   \les 
   \|Tv_t\|
   + \|w\|_{3}
   + \gamma (
   \|v\|_{3}
   + \|q\|_{2}
    )
    + \|v\|_{2}
    + \|q\|_{1}
    .
   \label{EQ59}
  \end{equation}
Using the bound~\eqref{EQ59} in~\eqref{EQ48} and absorbing the $\gamma$ term, we arrive at 
  \begin{equation}
   \|v\|_{3}
   + \|q\|_{2}
   \les
   \|v_t\|_{1}
   + \|Tv_t\|
   + \|w\|_{3}
   + \|v\|_{2} + \|q\|_{1}
   .
   \label{EQ60}
  \end{equation}
Besides $\|v_t\|_{1}$, every term on the right-hand side is controlled by~$X^{1/2}$.

Next, we perform the $H^2$ estimates on~$v_t$.
Applying $\partial_t$ to~\eqref{EQ31}--\eqref{EQ32} and noting that $[\partial_t, T] = 0$ since the coordinate functions of the vector fields $T_i$ are time-independent, we obtain
  \begin{align}
   \begin{split}
    -\Delta \partial_t v_i 
    + \partial_iq_t
    &=
    -\partial_{tt} v_i
    -\partial_t \partial_j((\delta_{jk} - a_{j\ell}a_{k\ell}) \partial_k v_i)
    + \partial_t ((\delta_{ki} -a_{ki})\partial_k q)
    ,
    \\
    \div v_t 
    &= 
    \partial_t \partial_k (\delta_{ki} - a_{ki} v_i)
    ,
   \end{split}
   \label{EQ61}
  \end{align}
with the boundary conditions
  \begin{align}
   \begin{split}
    N_k \partial_k \partial_t v_i
    - N_i q_t
    &=
   \partial_t((\delta_{jk} - a_{j\ell}a_{k\ell}) \partial_k v_iN_j)
   -\partial_t ((\delta_{ji} - a_{j i} )qN_j)
   + \partial_t(N_j \partial_j w_j)
   \inon{on $\Gac$,}
   \\
   N_k \partial_k v_i
   - N_i q_t
   &= 
   \partial_t((\delta_{kj} - a_{kj}) \partial_k v_i N_j)
   - \partial_t ((\delta_{ji} - a_{ji})qN_j)
   \inon{on $\Gab$.}
   \end{split}
   \label{EQ62}
  \end{align}
The Stokes estimate~\eqref{EQ30c} with the Neumann condition on $\Gac $ yields
  \begin{align}
   \begin{split}
    \|v_t\|_{2}
    + \|\nabla q_t\|_{1}
    &\les
    \|v_{tt}\|
    + \|\partial_t \nabla ((I-aa^T)\nabla v)\|
    + \|\partial_t ((I-a)\nabla q)\|
    \\&\indeq
    +\|\partial_t ((I-a)\nabla v)\|_{1}
    + \|\nabla w_t\|_{H^{1/2}(\Gac)}
    + \|\partial_t((I-aa^T)\nabla v)\|_{H^{1/2}(\Gac)}
    \\&\indeq
    + \|\partial_t((I-a)q)\|_{H^{1/2}(\Gac)}
    + \|\partial_t((I-a)\nabla v)\|_{H^{1/2}(\Gab)}
    + \|\partial_t((I-a)q)\|_{H^{1/2}(\Gab)}
    \\&\les
    \|v_{tt}\| 
    + \|\partial_t(aa^T)\|_{2} \|v\|_{2}
    + \|I-a a^T\|_{2} \|v_t\|_{2}
    + \|a_t\|_{2}\|v\|_{2}
    \\&\indeq
    + \|I-a\|_{2} \|v_t\|_{2}
    + \|a_t\|_{2} \|q\|_{1}
    + \|I-a\|_{2} \|q_t\|_{1}
    + \|w_t\|_{2}
    .
   \end{split}
   \label{EQ64}
  \end{align}
In the last line we used the trace inequality for the boundary terms, and, except for $w_t$, their bounds were identical to that of the interior terms. 
Recall that 
  \begin{equation}
   \|\partial_t (aa^T)\|_{2}
   \les 
   \|a_t\|_{2} \|a\|_{2}
   \les 
   \|\nabla \eta\|_{2} \|v\|_{3} (1 + \|I-a\|_{2})
   \les 
   \|v\|_{3}
   ,
   \label{EQ65}
  \end{equation}
along with $\|a_t\|_{2} \les \|v\|_{3}$.
Repeating the proof of~\eqref{EQ35a}, we use~\eqref{EQ65} to obtain
  \begin{equation}
   \|q_t\|_{1}
   \les
   \|v_t\|_{2}
   + \|\nabla q_t\|
   + \|v\|_{H^3} \|v\|_{H^2}
   ,
   \label{EQ63}
  \end{equation}
and combining~\eqref{EQ65} and~\eqref{EQ63} with~\eqref{EQ64}, we have 
  \begin{equation}
  \|v_t\|_{2}
  + \|q_t\|
  \les 
  \|v_{tt}\|
  + \gamma (\|v_t\|_{2} + \|q_t\|_{1})
  + \|v\|_{3} (\|v\|_{2} + \|q\|_{1})
  + \|w_t\|_{2}
  .
  \label{EQ66}
  \end{equation}
From~\eqref{EQ45}, we know that
  \begin{equation}
   \|w_t\|_{2}
   \les
   \|w_{ttt}\|
   + \|w_{tt}\|
   + \|v\|_{2}
   ,
   \label{EQ66a}
  \end{equation}  
giving us
  \begin{equation}
   \|v_t\|_{2}
   + \|q_t\|_{1}
   \les
   \|v_{tt}\|
   + \|v\|_{3} (\|v\|_{2} + \|q\|_{1})
   + \|w_{ttt}\|
   + \|w_{tt}\|
   + \|v\|_{2}
   .
   \label{EQ67}
  \end{equation}
We also note  the following $H^2$ Dirichlet estimate:
  \begin{equation}
   \|v_t\|_{2}
   + \|q_t\|_{1}
   \les
   \|v_{tt}\|
   + \|v_t\|_{1}
   + \|\nabla T v_t\|
   + \|v\|_{3} (\|v\|_{2} + \|q\|_{1})
   .
   \label{EQ67a}
  \end{equation}
Using~\eqref{EQ67},~\eqref{EQ44}, and~\eqref{EQ34} on our most recent rendition of the $H^3$ estimate on $v$ in~\eqref{EQ60}, we arrive at
  \begin{align}
   \begin{split}
    \|v\|_{3} 
    + \|q\|_{2}
    &\les
    \|v_t\|
    + \|v_{tt}\|
    + \|v\|_{3}(\|v\|_{2} + \|q\|_{1})
    + \|Tv_t\|
    \\&\indeq
    + \|w\|_{1}
    + \|w_t\|_{1}
    + \|w_{tt}\|_{1}
    + \|\nabla Tw_t\|
    + \|\nabla T^2 w\|
    .
    \end{split}
    \label{EQ68}
   \end{align}
Finally, recall that $\|v\|_{2} + \|q\|_{1} \les \gamma$ due to the smallness condition~\eqref{EQ30z}.
Thus we may absorb the quadratic term to obtain 
  \begin{equation}
   \|v\|_{3}^2
   + \|q\|_{2}^2
   \les 
   X
   .
   \label{EQ69}
  \end{equation}
Hence,
  \begin{equation}
   \|v\|_{3}^2
   + \|q\|_{2}^2
   + \|v_t\|_{2}^2
   + \|q_t\|_{1}^2
   + \|w\|_{3}^2
   + \|w_t\|_{2}^2
   + \|w_{tt}\|_{1}^2
   + \|w_{ttt}\|^2
   \les 
   X
   .
   \label{EQ70}
  \end{equation}
The proof is complete after setting $\gamma$ small enough for all of our absorption arguments to hold. 
\end{proof}

We conclude the section by a duality estimate
for the integral $\int_{\Gamma}Tf g\,d\sigma$,
where $\Gamma=\Gac \cup \Gaf \cup \Gab$ or $\Gamma = \Gac\cup \Gae$,
needed in the sequel.

\begin{Lemma}
\label{L01}
For $f,g\in C^{\infty}(\overline\Omega)$, we have
  \begin{equation}
   \left|\int_{\Gamma} Tf g\,d\sigma
   \right|
   \leq
   \Vert f\Vert_{H^{1/2}(\Gamma)}
   \Vert g\Vert_{H^{1/2}(\Gamma)}
   .
   \label{EQ18}
  \end{equation}
\end{Lemma}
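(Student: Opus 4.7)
The plan is to exploit the fact that $T$ is tangential to $\Gamma$, so that its restriction to $\Gamma$ acts as a smooth first-order differential operator on the closed submanifold $\Gamma$ (which has no boundary, being a union of closed components of $\partial \Omega$). First I would rewrite $T|_\Gamma$ in local charts on $\Gamma$: since $T = \sum_i b_i(x) \partial_i$ with each $T$ tangent to $\Gamma$, in any local parametrization of $\Gamma$ the operator $T|_\Gamma$ takes the form $\sum_\alpha \tilde b_\alpha(y)\partial_{y_\alpha}$ with smooth coefficients, hence it maps $C^\infty(\overline\Omega)|_\Gamma$ into $C^\infty(\Gamma)$ and extends to a bounded operator $H^{s}(\Gamma) \to H^{s-1}(\Gamma)$ for every $s \in \RR$.

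Next I would pass to the dual formulation. Since $\Gamma$ has no boundary, integration by parts on $\Gamma$ yields the identity
\begin{equation*}
\int_\Gamma (Tf)\, g\, d\sigma
=
-\int_\Gamma f\, (Tg)\, d\sigma
-\int_\Gamma f\, g\, (\mathrm{div}_\Gamma T)\, d\sigma,
\end{equation*}
where $\mathrm{div}_\Gamma T$ is the (smooth, bounded) tangential divergence of $T|_\Gamma$. This gives two complementary bounds: $|B(f,g)| \les \|f\|_{H^1(\Gamma)} \|g\|_{L^2(\Gamma)}$ on the one hand, by Cauchy--Schwarz applied directly, and $|B(f,g)| \les \|f\|_{L^2(\Gamma)} \|g\|_{H^1(\Gamma)}$ on the other, by Cauchy--Schwarz applied to the right-hand side above.

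To conclude, I would use the fact that Sobolev spaces on the smooth closed manifold $\Gamma$ interpolate as $[L^2(\Gamma),H^1(\Gamma)]_{1/2}=H^{1/2}(\Gamma)$, together with bilinear (Riesz--Thorin type) interpolation applied to the symmetric pair of bounds, to obtain
\begin{equation*}
\left|\int_\Gamma Tf\, g\, d\sigma\right|
\les
\|f\|_{H^{1/2}(\Gamma)}\, \|g\|_{H^{1/2}(\Gamma)},
\end{equation*}
which is the desired estimate (absorbing the constant into the implicit $\les$, or equivalently invoking the duality $H^{-1/2}(\Gamma)=(H^{1/2}(\Gamma))^*$ and writing $|B(f,g)| \le \|Tf\|_{H^{-1/2}(\Gamma)} \|g\|_{H^{1/2}(\Gamma)} \les \|f\|_{H^{1/2}(\Gamma)}\|g\|_{H^{1/2}(\Gamma)}$).

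The only subtle point I expect is the verification that $T|_\Gamma$ really is an intrinsic smooth operator on $\Gamma$; this is where tangentiality (as in \eqref{EQ15a} and the hypothesis that $T$ is tangent to every boundary component) is essential, since it ensures that $Tf$ on $\Gamma$ depends only on $f|_\Gamma$. Once this is in hand, both the integration by parts and the duality/interpolation step are standard, and no smallness assumption on $f$, $g$, or the geometry is needed.
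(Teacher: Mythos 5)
Your proof is correct, but it takes a genuinely different route from the paper's. The paper localizes with a partition of unity, flattens each boundary patch by a chart $G_j$, observes (as you do) that tangentiality forces the pushed-forward operator to be of the form $\bar T=a_1\partial_1+a_2\partial_2$ on $\RR^2$, and then performs an explicit Fourier-analytic split $\int \Lambda^{-1/2}\partial_1(\zeta_j\bar f)\cdot \Lambda^{1/2}(\psi_j a_1\zeta_j\bar g\,J)$ followed by Cauchy--Schwarz, with $\Lambda=(-\Delta)^{1/2}$ on $\RR^2$; the $1/2$-smoothing all happens by hand in local coordinates. You instead stay intrinsic on the closed manifold $\Gamma$: since $\Gamma=\partial\Omf$ or $\Gamma=\partial\Ome$ has no boundary, tangential integration by parts gives the symmetric pair of bounds on $H^1\times L^2$ and $L^2\times H^1$, and then bilinear complex interpolation (equivalently, boundedness of $T\colon H^{1/2}(\Gamma)\to H^{-1/2}(\Gamma)$ plus the $H^{\pm1/2}$ duality pairing) yields the $H^{1/2}\times H^{1/2}$ estimate. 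Both arguments hinge on exactly the same key fact --- that tangentiality makes $T|_{\Gamma}$ an intrinsic first-order operator on $\Gamma$, so $Tf|_\Gamma$ depends only on $f|_\Gamma$ --- but the paper keeps the argument fully elementary and chart-based at the cost of carrying cutoffs $\psi_j,\zeta_j$ and a Jacobian factor, while your version is shorter and more conceptual at the cost of invoking interpolation of bilinear forms as a black box. Both are valid; note that both give the estimate only up to a constant, so strictly speaking the $\le$ in the lemma's statement should be read as $\lesssim$, which is how the paper itself concludes.
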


\begin{proof}[Proof of Lemma~\ref{L01}]
The proof is obtained by using a partition of unity and straightening
of the boundary. 
Consider the mapping as in Figure~\ref{F2},
  \begin{figure}[ht]
    \centering
    \def\svgwidth{1.9\columnwidth}
    \incfig[0.85\columnwidth]{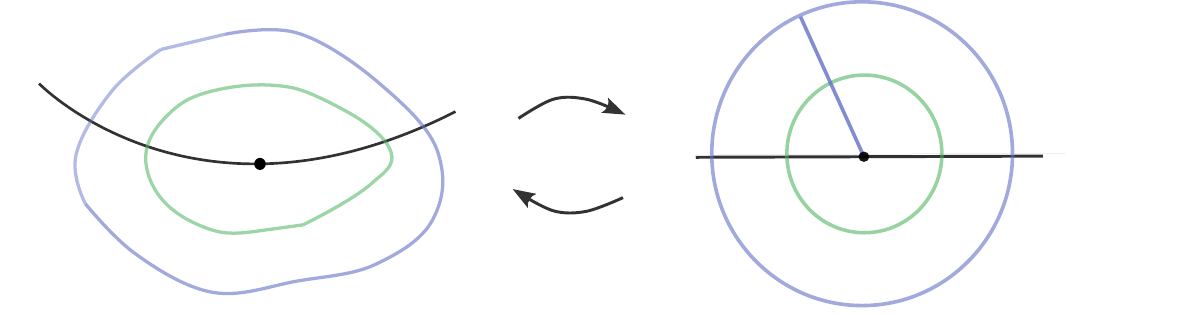}
    \caption{A local mapping}
    \label{F2}
  \end{figure}
for $j=1,\ldots,m$, where $m\in\mathbb{N}$.

For any fixed $j$, assume that~$\psi_j,\zeta_j\in C_{0}^\infty(\mathbb{R}^{3},[0,1])$ are supported in
$B_r$
such that
$\psi_j\equiv 1$ on~$B_{r/2}$ and $\zeta_j\equiv1 $ on a neighborhood
of~$\supp \psi_j$.
Define $\tilde{\psi}_j = \psi_j \circ F_j$ and $\tilde{\zeta}_j = \zeta_j \circ F_j$.
We assume that 
$ \{\tilde{\psi}_j\}_j $ forms a partition of unity for a neighborhood
of~$\Gamma$.

Now, using the partition-of-unity property, we have
  \begin{equation}
   \int_{\Gamma} Tf g 
   = \int_\Gamma T\left(
           \sum_{j} \tilde{\psi}_j  f
           \right)
	   g
   =
   \sum_{j} \int_{\Gamma} \tilde{\psi}_j  T f g
   +
   \sum_{j} \int_{\Gamma} T  \tilde{\psi}_j  f g
   .
   \llabel{EQ22}
  \end{equation}
Clearly, the second term satisfies the desired bound.
For a fixed $j$, denote
  \begin{equation}
   I
   = \int_{\Gamma}  \tilde{\psi}_j  T f g
   = \int_{\Gamma}  \tilde{\psi}_j  T( \tilde{\zeta}_j f)  (\tilde{\zeta}_j  g)
   .
   \llabel{EQ23}
  \end{equation}
Recall that any one $T$ from the collection defined in~\eqref{EQ15a} is of the form $T = b \cdot \nabla_x$. 
Changing coordinates, we have 
  \begin{equation}
   I
   =
   \int_{\mathbb{R}^2}
    \psi_j
    \bar{T}(\zeta_j \bar{f})
    \zeta_j \bar{g}
    J
   \d y
   ,
   \label{EQ24}
  \end{equation}
where
  \begin{equation}
    J = |\partial_{y_1} G_j(y_1,y_2,0) \times \partial_{y_2} G_j (y_1,y_2,0)|
    .
    \llabel{EQ24aa}
  \end{equation}
In the above, we defined $\bar{T} = ((DG_j)^{-1} b\circ G_j) \cdot \nabla_y$ and $\bar{f} = f \circ G_j$. 
Since $ b$ is tangential to $\Gamma$, it follows that $((DG_j)^{-1} b\circ G_j) \cdot e_3 = 0$.
Hence $\bar{T}$ must be of the form $\bar{T} = a_1(y) \partial_1 + a_2(y) \partial_2$, where $a_1$ and $a_2$ are smooth. 
Without loss of generality, assume that $\bar{T} = a_1(y) \partial_1$.
Hence
  \begin{align}
   \begin{split}
    |I|
    &=
    \Bigg| 
    \int_{\mathbb{R}^2} \psi_j 
    a_1
    \partial_1( \zeta_j \bar{f}) 
    \zeta_j \bar{g} 
    J 
    \d y
    \Bigg|
    \\&=
    \Bigg|
    \int_{\mathbb{R}^2} 
    \Lambda^{-1/2} \partial_1(\zeta_j \bar{f})
    \Lambda^{1/2} (\psi_j a_1 \zeta_j \bar{g} J)
    \d y
    \Bigg|
    \\&\les
    \|\zeta_j \bar{f}\|_{H^{1/2}(\mathbb{R}^2)}
    \|\psi_j a_1 \zeta_j \bar{g} J\|_{H^{1/2}(\mathbb{R}^2)}
    \les
    \|f\|_{H^{1/2}(\Gamma)} \|g\|_{H^{1/2}(\Gamma)}
    ,
   \end{split}
   \label{EQ28}
  \end{align}
where $\Lambda = (-\Delta)^{1/2}$ on~$\mathbb{R}^2$.
This completes the proof.
\end{proof}

\section{Combined energy estimates}\label{sec_combined}
In this section we prove the estimate \eqref{EQ94_intro}, i.e.,
\begin{equation}
   \frac{\d}{\d t} E_S(t,\tau) 
   + D_S(t)
   \les
   L_S(t,\tau) 
   + N_S(t,\tau) 
   + C_S(t,\tau)
   \label{EQ94}
  \end{equation}
for all sufficiently small $\lambda >0$, where 
 \begin{align}
   \begin{split}
    E_S(t,\tau)
    &\coloneqq 
    \frac{1}{2} \|Sv\|^2
    + \frac{1}{2} \|Sw_t\|^2
    + \frac{1}{2}\|\nabla Sw\|^2
    + \frac{\lambda \alpha}{2} \|Sw\|^2
    \\&\indeq
    + \frac{\lambda}{2} \|\nabla S(\eta - \eta(\tau))\|^2
    + \lambda \int_{\Omf} Sv \cdot \phi 
    + \lambda \int_{\Ome} Sw_t \cdot Sw 
    ,
    \\
    D_S(t)
    &\coloneqq 
    \frac{1}{2} \|\nabla S v\|^2
    + \frac{1}{C} \|Sv\|^2
    + (\alpha - \lambda) \|Sw_t\|^2
    + \frac{\lambda}{2}\|\nabla Sw\|^2
    + \frac{\lambda}{C} \|Sw\|^2
    ,
    \\
    L_S(t,\tau)
    &\coloneqq  
    -\lambda \int_{\Omf} \nabla Sv \colon \nabla \widetilde{S(w(\tau))}
    + \lambda \int_{\Omf} Sq \div{\phi}
    \\&=: 
    L_{S,1}(t,\tau)
    + L_{S,2}(t,\tau)
    ,
    \\
    N_S(t,\tau)
    &\coloneqq  
    \int_{\Omf} (\delta_{kj} - a_{j\ell}a_{k\ell}) \partial_k Sv_i \partial_j Sv_i
    + \lambda\int_{\Omf} S((\delta_{jk} - a_{j\ell}a_{k\ell})\partial_k v_i)\partial_j \phi_i
    \\&\indeq
    -\lambda \int_{\Omf} S((\delta_{ki} - a_{ki})q)\partial_k \phi_i
    ,\\
    C_S(t,\tau)
    &\coloneqq  C_{S,1}(t,\tau)
   + C_{S,2}(t),
   \end{split}
   \label{EQ95}
  \end{align}
with
  \begin{align}
   \begin{split}
    C_{S,1}(t,\tau)
    &\coloneqq 
    -\lambda \int_{\Omf} (S\partial_j v_i - \partial_j Sv_i) \partial_j \phi_i
    + \lambda \int_{\Omf} (S\partial_j(a_{j\ell} a_{k \ell} \partial_k v_i) - \partial_j S(a_{j\ell}a_{k\ell} \partial_k v_i))\phi_i
    \\&\indeq
    - \lambda \int_{\Omf} (S\partial_k (a_{k i} q) - \partial_k S(a_{k i} q))\phi_i
    + \lambda \int_{\Ome}(S\Delta w -\Delta Sw)\cdot Sw
    - \lambda \int_{\Gac} (S(N_j \partial_j w_i) - N_j\partial_j Sw_i)\phi_i
    \\&\indeq
    + \lambda \int_{\Gab\cup \Gac} (S(N_j^f a_{j \ell} a_{k\ell} \partial_k v_i) - N_j^f S(a_{j\ell} a_{k\ell} \partial_k v_i))\phi_i
    - \lambda \int_{\Gab \cup \Gac} (S(N_k^f a_{ki} q) - N_k^f S(a_{ki} q))\phi_i
    ,\\
    C_{S,2}(t)
    &\coloneqq 
    \int_{\Omf} (S\partial_j (a_{j\ell} a_{k\ell} \partial_k v_i) - \partial_j (a_{j\ell} a_{k\ell} \partial_k Sv_i)) Sv_i
    - \int_{\Omf} (S(a_{ki} \partial_k q) - a_{ki} \partial_k Sq) Sv_i
    \\&\indeq
    + \int_{\Gab \cup \Gac} (S(N_j a_{j\ell} a_{k\ell} \partial_k v_i) - N_j a_{j\ell} a_{k\ell} \partial_k Sv_i)Sv_i
    + \int_{\Gab\cup \Gac} (S(a_{ji} N_j q) -a_{ji}N_j Sq)Sv_i
    \\&\indeq
    + \int_{\Omf} Sq a_{ki} \partial_k Sv_i
    + \int_{\Ome} (S \Delta w - \Delta S w) \cdot Sw_t
    +\int_{\Gac} (S(N_j \partial_j w_i) - N_j \partial_j Sw_i) Sv_i
    ;
   \end{split}
   \label{EQ98}
  \end{align}
here, for each $S$, we have defined the test function
  \begin{equation}
   \phi(t) 
   \coloneqq 
   S \eta(t)
   - S(\eta(\tau))
   + \widetilde{S(w(\tau))}
   \inon{on $\Omf$},
   \label{EQ84}
  \end{equation}
in other words,
\[
\phi (t) = \begin{cases} S \eta(t)
   - S\eta(\tau)
   + \widetilde{Sw}(\tau)\hspace{2cm}&S\in \{ \id, T, T^2 \},\\ 
   v(t) & S= \p_t,\\
   Tv(t) & S= T\p_t,\\
   \p_t v(t) & S= \p_{tt}.   
   \end{cases}
\]
Here,  $\tilde{~}\colon H^k(\Ome) \to H^k(\Omf \cup \Ome)$ is an extension operator such that  $\|\widetilde{f}\|_{H^k} \les \|f\|_{H^k(\Ome)}$ for $k=0,1,2,3$. \\

Note that integrating the continuity condition~\eqref{EQ08} in time gives us 
  \begin{equation}
   \eta(t)
   -\eta(\tau)
   +w(\tau)
   =
   w(t)
   \inon{on $\Gac$,}
   \label{EQ85}
  \end{equation}
so that 
  \begin{equation}
   \phi(t)|_{\Gac} 
   = 
   Sw(t)|_{\Gac} 
   = 
   \widetilde{Sw}(t)|_{\Gac}
   .
   \label{EQ86}
  \end{equation}
Moreover, applying Young's inequality on the last two terms in the definition of $E_S$, we get the equivalence
  \begin{equation}
   E_S(t,\tau)
   \sim 
   \|Sv(t)\|^2
   + \|Sw_t\|^2
   + \|Sw(t)\|_{1}^2
   + \lambda \|\nabla S(\eta - \eta(\tau))\|^2
   + \lambda \int_{\Omf} Sv \cdot \widetilde{S(w(\tau))}
   ,
   \label{EQ95a}
  \end{equation}
which holds for all sufficiently small $\lambda>0$.

\begin{proof}[Proof of \eqref{EQ94}.]
We start by applying $S$ to the momentum equation~\eqref{EQ06}$_1$ to obtain
  \begin{align}
   \begin{split}
    & 
    S\partial_t v_i
    - \partial_j (a_{j\ell} a_{k\ell} \partial_k Sv_i)
    + a_{ki} \partial_k Sq
    \\&\indeq\indeq=
    (S \partial_j (a_{j\ell} a_{k\ell} \partial_k v_i)
    - \partial_j (a_{j \ell} a_{k \ell} \partial_k Sv_i))
    - (S(a_{ki} \partial_k q) - a_{ki} \partial_k Sq)
    .
   \end{split}
   \label{EQ72}
  \end{align}
For brevity, we denote the right-hand side of~\eqref{EQ72} as $(K_1^S)_i$. 
Testing with $Sv_i$ and integrating by parts, we get 
  \begin{align}
   \begin{split}
    \frac{1}{2}\frac{\d}{\d t}\|Sv\|^2
    + \|\nabla Sv\|^2
    &=
    \int_{\Omf} (\delta_{jk} - a_{j\ell}a_{k\ell}) \partial_k Sv_i \partial_j Sv_i
    + \int_{\Omf} Sq \partial_k (a_{ki} Sv_i)
    + \int_{\Omf} K_1^S \cdot Sv
    \\&\indeq
    + \int_{\partial \Omf} N_j a_{j \ell} a_{k\ell} \partial_k Sv_i Sv_i
    - \int_{\partial\Omf} N_k Sq a_{ki} Sv_i
    .
   \end{split}
   \label{EQ73}
  \end{align}
It will be advantageous in later estimates to use the Neumann boundary conditions~\eqref{EQ09}--\eqref{EQ10} in the boundary integrals above. 
That is, note that~\eqref{EQ09} gives us 
  \begin{equation}
    N_j a_{j\ell} a_{k\ell} \partial_k Sv_i
    - a_{ji} N_j Sq
    = 
    N_j \partial_j Sw_i
    + K_2^S
    \inon{on $\Gac$}
    \comma i=1,2,3,
    \label{EQ74}
  \end{equation}
where $K_2^S$ is the commutator 
  \begin{align}
   \begin{split}
    (K_2^S)_i
    &=
    (S(N_j \partial_j w_i) - N_j \partial_j Sw_i)
    + (S(a_{ji} N_j q) - a_{ji} N_j Sq)
    \\&\indeq
    + (S(N_j a_{j\ell} a_{k\ell} \partial_k v_i) - N_j a_{j\ell} a_{k\ell} \partial_k Sv_i)
    .
   \end{split}
   \label{EQ75}
  \end{align}
Also, on $\Gab$, we have
  \begin{equation}
   N_j a_{j\ell} a_{k \ell} \partial_k Sv_i
   - N_j a_{ji} S q
   =
   (K_3^S)_i
   \inon{on $\Gab$}
   \comma i=1,2,3,
   \label{EQ76}
  \end{equation}
where 
  \begin{equation}
   (K_3^S)_i
   =
   (S(N_j a_{j \ell} a_{k\ell} \partial_k v_i) 
   - N_j a_{j\ell} a_{k\ell} \partial_k Sv_i)
   + (S(N_j a_{ji} q) - N_j a_{ji} Sq)
   .
   \label{EQ77}
  \end{equation}
Substituting~\eqref{EQ74} and~\eqref{EQ76} into~\eqref{EQ73}, we obtain  
  \begin{align}
   \begin{split}
    &\frac{1}{2} \frac{\d}{\d t} \|Sv\|^2
    + \|\nabla Sv\|^2
    \\&\indeq\indeq=
    \int_{\Omf} Sq \partial_k (a_{ki} Sv_i)
    + \int_{\Omf} (\delta_{jk} - a_{j\ell} a_{k\ell})\partial_k Sv_i \partial_j Sv_i
    - \int_{\Gac} N_j^e \partial_j Sw_i S(w_t)_i
    \\&\indeq\indeq\indeq
    + \int_{\Omf} K_1^S \cdot Sv  
    + \int_{\Gac} K_2^S \cdot Sv 
    + \int_{\Gab} K_3^S\cdot  Sv
    ,
   \end{split}
   \label{EQ78}
  \end{align}
where we used the continuity of velocity condition~\eqref{EQ08} and define $-\Ne = \Nf$ where $\Ne$ denotes the outward normal vector with respect to $\Omega_e$ and analogously with~$\Nf$.
Except when indicated for emphasis, we take $N$ to mean the normal vector relative to $\Ome$ when we are on $\Gae$ and relative to $\Omf$ when on~$\Gab$.

Now, we derive estimates for the elastic displacement~$w$.
We apply $S$ to~\eqref{EQ01} and obtain 
  \begin{equation}
   S w_{tt}
   - \Delta S w
   + \alpha Sw_t
   =
   (S \Delta w - \Delta Sw)
   .
   \label{EQ79}
  \end{equation}
Testing with $Sw_t$ and integrating by parts, we find that
  \begin{align}
   \begin{split}
    &\frac{1}{2}\frac{\d}{\d t} \|Sw_t\|^2
    + \frac{1}{2} \frac{\d}{\d t} \|\nabla Sw\|^2
    + \alpha \|Sw_t\|^2
    \\&\indeq\indeq=
    \int_{\Gac} \Ne_i \partial_j Sw_i Sv_i
    + \int_{\Ome} (S\Delta w - \Delta Sw) \cdot Sw_t
    ,
   \end{split}
   \label{EQ80}
  \end{align}
where we used the continuity of velocity condition on~$\Gac$.
Now, we add equations~\eqref{EQ78} and~\eqref{EQ80} together to get
  \begin{align}
   \begin{split}
    &\frac{1}{2}\frac{\d}{\d t} (
        \|Sv\|^2
        + \|Sw_t\|^2
        + \|\nabla Sw \|^2
        )
    + \|\nabla Sv\|^2
    + \alpha \|Sw_t\|^2
    \\&\indeq\indeq= 
    \int_{\Omf} (\delta_{jk} - a_{j\ell}a_{k\ell}) \partial_k Sv_i \partial_j Sv_i
    + \int_{\Omf} Sq \partial_k (a_{ki} Sv_i) 
    + \mathcal{K}_1^S
    ,
   \end{split}
   \label{EQ81}
  \end{align}
where we define 
  \begin{equation}
   \mathcal{K}_1^S
   \coloneqq
   \int_{\Omf} K_1^S \cdot Sv
   + \int_{\Gac} K_2^S\cdot Sv
   + \int_{\Gab} K_3^S\cdot Sv
   + \int_{\Ome} (S\Delta w - \Delta Sw) \cdot Sw_t
   .
   \label{EQ82}
  \end{equation}
In~\eqref{EQ81}, we note the cancellation of the higher-order boundary integrals due to~\eqref{EQ74} and~\eqref{EQ76}.

Next, we perform the equipartition estimates. 
We test~\eqref{EQ79} with $Sw$ and integrate by parts to obtain
  \begin{align}
   \begin{split}
    &\frac{\d}{\d t} \Big(
        \frac{\alpha}{2} \|Sw\|^2
        + \int_{\Ome} S\partial_t w_i Sw_i
        \Big)
    + \|\nabla Sw\|^2
    \\&\indeq\indeq=
    \|Sw_t\|^2 
    + \int_{\Gac} N_j \partial_j Sw_i Sw_i
    + \int_{\Ome} (S\Delta w - \Delta Sw)\cdot Sw
    .
   \end{split}
   \label{EQ83}
  \end{align}
Testing~\eqref{EQ72} with $\phi$, which is defined in~\eqref{EQ84}, we get 
  \begin{equation}
    \int_{\Omf} S\partial_t v_i \phi_i
    - \int_{\Omf} \partial_j S(a_{j\ell} a_{k\ell} \partial_k v_i) \phi_i
    + \int_{\Omf} \partial_k S(a_{ki} q) \phi_i
    = \int_{\Omf} K_1^S\cdot \phi
    .
    \label{EQ87}
  \end{equation}
We then integrate by parts and use the product rule in the time-derivative to infer 
  \begin{align}
   \begin{split}
    &\frac{\d}{\d t} \int_{\Omf} Sv_i \phi_i
    + \int_{\Omf} S(a_{j\ell}a_{k\ell} \partial_k v_i)\partial_j \phi_i
    \\&\indeq\indeq\indeq=
    \|Sv\|^2 
    + \int_{\Omf} S(a_{ki} q) \partial_k \phi_i
    + \int_{\partial\Omf} N_jS(a_{j\ell} a_{k \ell} \partial_k v_i) \phi_i
    - \int_{\partial\Omf} N_kS(a_{ki} q) \phi_i
    + \int_{\Omf} K_1^S\cdot \phi
    .
   \end{split}
   \label{EQ88}
  \end{align}
Considering the second term on the left-hand side of~\eqref{EQ88}, we expand 
  \begin{equation}
   \begin{split}
    \int_{\Omf} S(a_{j\ell}a_{k\ell} \partial_k v_i) \partial_j \phi_i
    &=
    \int_{\Omf} \nabla Sv : \nabla \phi
    + \int_{\Omf} S((a_{j\ell}a_{k\ell} - \delta_{jk})\partial_k v_i) \partial_j \phi_i
    + \int_{\Omf} (S\partial_j v_i - \partial_j Sv_i) \partial_j \phi_i
    \\&=
    \int_{\Omf} \nabla S \partial_t (\eta - \eta(\tau)) : \nabla S(\eta - \eta(\tau))
    + \int_{\Omf} \nabla S v : \nabla \widetilde{S(w(\tau))}
    \\&\indeq
    + \int_{\Omf} S((a_{j\ell}a_{k\ell} - \delta_{jk}) \partial_k v_i) \partial_j \phi_i
    + \int_{\Omf} (S\partial_j v_i - \partial_j S v_i)\partial_j \phi_i
    \\&=
    \frac{1}{2} \frac{\d}{\d t} \|\nabla S(\eta - \eta(\tau))\|^2
    + \int_{\Omf} \nabla Sv : \nabla  \widetilde{S(w(\tau))}
    \\&\indeq
    + \int_{\Omf} S((a_{j\ell}a_{k\ell} - \delta_{jk} )\partial_j v_i) \partial_j \phi_i
    + \int_{\Omf}(S \partial_j v_i - \partial_j Sv_i) \partial_j \phi_i
    ,
   \end{split}
   \label{EQ89}
  \end{equation}
where $A:B = A_{ij} B_{ij}$.
Similarly,  
  \begin{equation}
   \int_{\Omf} S(a_{ki} q)\partial_k \phi_i
   = 
   \int_{\Omf} Sq \div{\phi}
   - \int_{\Omf} S((\delta_{ki} - a_{ki})q) \partial_k \phi_i
   .
   \label{EQ90}
  \end{equation}
Incorporating~\eqref{EQ89} and~\eqref{EQ90} into~\eqref{EQ88} yields 
  \begin{align}
   \begin{split}
    &\frac{\d}{\d t} \int_{\Omf} Sv_i \phi_i
    + \frac{1}{2} \frac{\d}{\d t} \|\nabla S (\eta - \eta(\tau))\|^2
    \\&\indeq\indeq=
    \|Sv\|^2
    - \int_{\Omf} \nabla Sv : \nabla \widetilde{S(w(\tau))} 
    + \int_{\Omf} Sq \div{\phi}
    + \int_{\Omf} S((\delta_{jk} - a_{j\ell} a_{k\ell})\partial_k v_i) \partial_j \phi_i
    \\&\indeq\indeq\indeq
    - \int_{\Omf} S((\delta_{ki} -a_{ki})q)\partial_k \phi_i
    - \int_{\Omf} (S\partial_j v_i - \partial_j Sv_i)\partial_j \phi_i
    + \int_{\Omf} K_1^S \cdot \phi
    \\&\indeq\indeq\indeq
    + \int_{\partial \Omf} N_jS(a_{j\ell} a_{k\ell} \partial_kv_i)\phi_i
    - \int_{\partial\Omf} N_k S(a_{ki} q) \phi_i
    .
   \end{split}
   \label{EQ91}
  \end{align}
In particular, we examine the last two higher-order boundary terms in~\eqref{EQ91}.
Our boundary conditions~\eqref{EQ09} and~\eqref{EQ10} give us
  \begin{align}
   \begin{split}
    &\int_{\partial \Omf} S(a_{j\ell}a_{k\ell} \partial_k v_i)\Nf_j \phi_i
    - \int_{\partial \Omf}S(a_{ki}q)\Nf_k \phi_i
    \\&\indeq\indeq=
    \int_{\Gac}\Nf_j \partial_j Sw_i \phi_i
    + \int_{\Gac}(S(\Nf_j \partial_j w_i) - \Nf_j \partial_j Sw_i)\phi_i
    \\&\indeq\indeq\indeq
    - \int_{\Gab \cup \Gac} (S(\Nf_j a_{j\ell} a_{k\ell} \partial_k v_i) - \Nf_j S(a_{j\ell} a_{k\ell} \partial_k v_i))\phi_i
    + \int_{\Gab \cup \Gac}(S(\Nf_k a_{ki} q) - \Nf_k S(a_{ki} q))\phi_i
    \\&\indeq\indeq=
    -\int_{\Gac} \Ne_j \partial_j Sw_i Sw_i 
    + \mathcal{K}_2^S
    ,
   \end{split}
   \label{EQ92}
  \end{align}
where we used~\eqref{EQ86} in the last line and $\mathcal{K}_2^S$ is defined in the obvious way.
Substituting this into~\eqref{EQ91} and adding the resulting equation to~\eqref{EQ83}, the high-order boundary integrals on $\Gac$ cancel, and we recover
  \begin{align}
   \begin{split}
    &\frac{\d}{\d t} \Bigg( 
            \frac{\alpha}{2} \|Sw\|^2 
            + \int_{\Ome} Sw_t \cdot Sw
            + \frac{1}{2} \|\nabla S (\eta - \eta(\tau))\|^2
            + \int_{\Omf} Sv \cdot \phi
    \Bigg)
    + \|\nabla Sw\|^2
    \\&\indeq\indeq=
    \|Sw_t\|^2
    + \|Sv\|^2
    -\int_{\Omf} \nabla Sv \colon \nabla \widetilde{S(w(\tau))}
    + \int_{\Omf} Sq \div{\phi}
    \\&\indeq\indeq\indeq
    + \int_{\Omf} S((\delta_{jk} - a_{j\ell} a_{k\ell}) \partial_k v_i) \partial_j \phi_i
    - \int_{\Omf} S((\delta_{ki} - a_{ki})q) \partial_k \phi_i
    \\&\indeq\indeq\indeq
    - \int_{\Omf} (S\partial_j v_i - \partial_j Sv_i) \partial_j \phi_i
    + \int_{\Omf} K_1^S \cdot \phi
    + \int_{\Ome} (S\Delta w - \Delta Sw)\cdot Sw
    + \mathcal{K}_2^S
    .
   \end{split}
   \label{EQ93}
  \end{align}
Let $\lambda \in (0,1]$. 
We then multiply~\eqref{EQ93} by $\lambda$ and add it to the higher-order energy identity~\eqref{EQ81}.
From the resulting equality, we get a lower bound for the left-hand side by the Poincar\'e inequalities $\|Sv\|^2 \les \|\nabla Sv\|^2$ and $\|Sw\|^2 \les \|\nabla Sw\|^2$.
The resulting inequality is of the desired form. 
\end{proof}

\section{The total energy estimate}\label{sec_total}

As mentioned in Step~2 of the sketch (Section~\ref{sec_sketch}), in this section we show  that, if $\gamma >0$ is sufficiently small, then
\eqnb\label{est_apr}
   Y (t)
  + \lambda \int_\tau^t Y 
  \leq
  C(1 + \lambda^\alpha (t-\tau))Y(\tau)
  + C(\lambda^\alpha  + \lambda^{\kappa} (t-\tau )^2  ) \int_\tau^t Y
  + h O(Y)
\eqne
for all $0\leq \tau \leq t$ such that
\[
h(t) \leq \gamma,
\]
where $Y,h,O(Y)$ are defined in step 2 of the sketch (Section~\ref{sec_sketch}), i.e.,
\eqnb
 Y
    \coloneqq 
     \sum_{S }
      \lambda^{-c_S} \left(
       \|Sv\|^2
       + \|Sw_t\|^2
       + \|Sw\|_{1}^2
      \right),
\eqne
$c_S \geq 0$, for $S \in \{ \id , T,\partial_t, T\partial_t, T^2, \partial_{tt}\}$, $h(t) \coloneqq \sup_{[0,t)} Y^{1/2} + \int_0^t Y^{1/2}$, and $O(Y)$ denotes 
any term of the form of sums of terms involving any constant, any powers of $\lambda$ and $(t-\tau)$ and at least two factors of the form $Y^{1/2}(\tau )$, $Y^{1/2} (t)$, and~$\int_\tau^t Y^{1/2}$.\\

Given the coefficients $c_S$, which we determine below, we set
  \eqnb\label{def_abc}
   \begin{split}
    \alpha
    &\coloneqq
    \min
    \bigg\{
     2,
     2-2c_{T\partial_t} + c_{\id} + c_{\partial_{tt}},
     1-c_{\id} + c_{T\partial_t},
     1 - c_{\id} + c_{\partial_t},
     2- c_{T^2} + \min\{c_{\partial_t}, c_{\id}, c_{T^2}\},
     \\&\hspace{2cm}
     2 - c_{T} + c_{\id},
     2-c_{T^2} + \min\{c_{\partial_t}, c_{\id}, c_{T}\}, 
     2-2c_{T^2} +  c_T + \min\{c_{\partial_t}, c_{\id}, c_{T^2}\} 
     \\&\hspace{2cm}
     2-2c_T + c_{\id} + \min\{c_{\partial_t}, c_{\id}, c_T\}
    \bigg\},
    \\
    \kappa
    &\coloneqq
    \min
    \bigg\{
     2-2c_{T^2} + \min\{c_{\partial_t}, c_{\id}, c_{T^2}\} + \min\{c_{\partial_t},c_{T\partial_t}\},
     2-2c_T + \min\{c_{\partial_t}, c_{\id}, c_{T}\} + \min\{c_{\partial_t}, c_{T\partial_t}\},
  \\&\hspace{2cm}
     2-2c_T + c_{\id} + \min\{c_{\partial_t}, c_{T\partial_t}\},
     2-2c_{T^2} + \min\{c_{\partial_t}, c_{\id}, c_{T}\},  
    \bigg\}.
   \end{split}
  \eqne
We note that the above definitions of $\alpha, \gamma$ express the interdependence of the energy estimates for all\\
$S\in   \{ \id , T,\partial_t, T\partial_t, T^2, \partial_{tt}\}$ between each other, which exposes the complexity of the fluid structure interaction on a curved domain. We also set
 \eqnb\label{epsilon}
  \begin{split}
   \epsilon
   &\coloneqq
   \min
   \bigg\{
    2-2c_{T\partial_t} + c_{\id} +  \min \{ c_{\p_{tt}} , c_{\p_t}, c_{T\p_t } \} \colb ,
    2 - c_{T\partial_t} + c_{\id},
    2 - c_{T\partial_t} + c_{\partial_t} + \min\{c_{\partial_t} - 1, c_{T\partial_t} -1\},
    \\&\hspace{2cm}
    - 2c_{T^2} + c_T + \min\{c_{\partial_t}, c_{\partial_{tt}}, c_{\id}, c_{T^2}\},
    -2c_T + c_{\id} + \min\{c_{\partial_t}, c_{\partial_{tt}}, c_{\id}, c_T\},
    \\&\hspace{2cm}
    -2c_{T\partial_t} + c_{\partial_t}+ \min\{c_{\partial_{tt}},c_{\partial_t}, c_{T\partial_t}\},
   -2c_{T^2} + \min\{c_{\partial_t},c_{\id},c_{T^2}\}+ \min\{c_{\partial_t}, c_{\partial_{tt}}, c_{\id} , c_T\},\\
    &\hspace{2cm}
    -1 - 2c_{T} + c_{\id} + \min\{c_{\partial_t}, c_{T\partial_t}\},
    -2 -2c_T + c_{\partial_t} + \min\{c_{\partial_t}, c_{\partial_{tt}}, c_{\id}, c_T\},
    \\&\hspace{2cm}
    -2 - 2c_{T^2} + c_{T\partial_t} + \min\{c_{\partial_t}, c_{\partial_{tt}}, c_{\id}, c_{T^2}\},
    -2-2c_{T\partial_t} + c_{\partial_{tt}} + \min\{c_{\partial_{tt}}, c_{\partial_t}, c_{T\partial_t}\}
   \bigg\}
  \end{split}
  \eqne
In order for the energy estimate \eqref{est_apr} to yield global well-posedness and exponential decay as $t\to \infty$, we require that  
\eqnb\label{need1}
 \alpha >1, \,\,\,  \kappa >3 ,  \eqne
 as well as
 \eqnb\label{need3}
 \epsilon >0 \quad \text{ and } \quad  c_{\p_t } > c_{T\p_t } -1.
 \eqne

As mentioned in Step~4 in the sketch (Section~\ref{sec_sketch}), 
 the question of global well-posedness and exponential decay reduces to the problem of determining whether or not there exist coefficients $c_S\geq 0$, for all $S$ such that \eqref{need1} and \eqref{need3} hold. The answer to this is affirmative, and we make the choice as mentioned in \eqref{the_choice_intro}, i.e.
  \[
  \begin{aligned}
    c_{\id} &\coloneqq  5, & c_{\p_t} &\coloneqq  20/3,\\
    c_{T} &\coloneqq  5/3, & c_{T\p_t} &\coloneqq  17/3,\\
    c_{T^2} &\coloneqq  0, & c_{\p_{tt}}&\coloneqq  25/3,
  \end{aligned}
  \]
which gives that  $\alpha = 5/3$, $\kappa = 11/3 $, $\epsilon = 2/3$, so that \eqref{need1}--\eqref{need3} hold. \vspace{1cm}\\

Having fixed the coefficients $c_S$, we can derive \eqref{est_apr}, by multiplying~\eqref{EQ94} by $\lambda^{-c_S}$, integrating in time over $(\tau,t)$, summing over $S$, and using the equivalent form~\eqref{EQ95a} of $E_S$, to get 
  \begin{align}
   \begin{split}
    &
    Y (t)
    + \sum_{S} 
   \lambda^{-c_S} \left( \lambda \int_{\Omf} \left( Sv(t) -  S(v(\tau )) \right) \cdot  \widetilde{S(w (\tau ))} +  \int_\tau^t  \left(
        \|Sv\|_{1}^2
        + \|Sw_t\|^2
        + \lambda \|Sw\|_{1}^2
        \right) \right) \\
        & \hspace{6cm} + \sum_{S\in \{ \p_t , T\p_t , \p_{tt} \} } \lambda^{1-c_S } \left( \| \nabla  \eta  (t) \|^2 - \| \nabla  S (\eta  (\tau ))  \|^2 \right) 
    \\&\hspace{6cm}
    \les Y (\tau)
        + \sum_{S }
     \lambda^{-c_S} \int_\tau^t (
      L_S
      + N_S
      + C_S
      )
    ,
   \end{split}
   \label{EQ205}
  \end{align}
where we noted that $\| \nabla S (\eta (t) - \eta (\tau )) \| \geq 0$ for all $S$ not involving a time derivative, so that those terms  do not appear in the second line of~\eqref{EQ205}. In fact, the remaining terms in that line can also be absorbed:\\

\noindent If $S=\p_t$ then  
  \eqnb\label{EQ207}\begin{split}
   &\lambda^{1-c_{\p_t}}  (
       \|\nabla v(t)\|^2
       - \|\nabla v(\tau)\|^2
       )
   =
    \lambda^{1-c_{\p_t}}\int_\tau^t \frac{\d}{\d s} \|\nabla v(s)\|^2\, \d s
   \leq
   C \lambda^{1-c_{\p_t}} \int_\tau^t \|\nabla v_t\|\, \|\nabla v\|
   \\&\indeq
   \leq \frac{\lambda^{-c_{\id }}}4   \int_\tau^t\|\nabla v\|^2
   + C \lambda^{2-2c_{\p_t} + c_{\id }} \int_\tau^t\|\nabla v_t\|^2
   \leq \frac{\lambda^{-c_{\id }}}4 \int_\tau^t\|\nabla v\|^2
   + C \frac{\lambda^{-c_{\p_t} }}4 \int_\tau^t\|\nabla v_t\|^2
\end{split}  \eqne
if $\lambda >0$ is sufficiently small.\\

\noindent If $S=T\p_t $, then
\eqnb\label{EQ209}\begin{split}
  &
   \lambda^{1-c_{T\p_t}}  (
       \|\nabla T v(t)\|^2
       - \|\nabla T v(\tau)\|^2
       )
   =
    \lambda^{1-c_{T \p_t}}\int_\tau^t \frac{\d}{\d s} \|\nabla T v(s)\|^2\, \d s
   \leq
   C \lambda^{1-c_{T \p_t}} \int_\tau^t \|\nabla T v_t\|\, \|\nabla  T v\|
   \\&\indeq
   \leq \frac{\lambda^{-c_{T\p_t  }}}4   \int_\tau^t\|\nabla T v_t \|^2
   + C \lambda^{2-c_{T \p_t}} \int_\tau^t\|  v \|_2^2
   \\&\indeq
   \leq \frac{\lambda^{-c_{T\p_t  }}}4   \int_\tau^t\|\nabla T v_t \|^2
   + C \lambda^{2-c_{T \p_t}} \int_\tau^t \left( \|  v_t \|^2 + \| w_t \|_1^2 + \| \nabla T w_t \|^2 \right)
   \\&\indeq
   \leq \frac{\lambda^{-c_{T\p_t  }}}4   \int_\tau^t\|\nabla T v_t \|^2
   +  C  \int_\tau^t \left(\frac{\lambda^{-c_{\p_t}}}4 \|  v_t \|^2 +\frac{\lambda^{1-c_{\p_t}}}4 \| w_t \|_1^2 + \frac{\lambda^{1-c_{T \p_t}}}4\| \nabla T w_t \|^2 \right) 
\end{split}  \eqne
for all sufficiently small $\lambda>0$, where we used \eqref{EQ34} in the third line, and the fact that $\lambda^{2-c_{T\p_t} } < \lambda^{1-c_{\p_t}}$ (a consequence of \eqref{need3}, i.e., $c_{\p_t} > c_{T\p_t } -1$ ).\\

\noindent If $S=\p_{tt} $ then
\eqnb\label{EQ211}\begin{split}
  &   \lambda^{1-c_{\p_{tt}}}  (
       \|\nabla  v_t (t)\|^2
       - \|\nabla  v_t (\tau)\|^2
       )
   =
    \lambda^{1-c_{\p_{tt}}}\int_\tau^t \frac{\d}{\d s} \|\nabla  v_t (s)\|^2\, \d s
   \leq
   C \lambda^{1-c_{\p_{tt}}} \int_\tau^t \|\nabla  v_{tt}\|\, \|\nabla   v_t \| \\&\indeq
   \leq \frac{\lambda^{-c_{\p_t  }}}4   \int_\tau^t\|\nabla  v_t \|^2
   + \lambda^{2-2c_{\p_{tt}}+c_{\p_t}} \int_\tau^t\|  \nabla v_{tt}  \|^2
   \leq \frac{\lambda^{-c_{\p_t  }}}4   \int_\tau^t\|\nabla  v_t \|^2
   + \frac{\lambda^{-c_{\p_{tt}}}}4  \int_\tau^t\|  \nabla v_{tt}  \|^2,
\end{split}  \eqne
where we used \eqref{EQ34} in the last line. Thus, since the right-hand sides of \eqref{EQ207}--\eqref{EQ211} can be absorbed by the terms under ``$\int_\tau^t$'' on the left-hand side of \eqref{EQ205}, this concludes this step.

We now show that the first term in the bracket in \eqref{EQ205} can be neglected. Indeed, for every $S$
  \begin{equation}
    \lambda^{1-c_S} \int_{\Omega_f} (Sv(t) -  S(v(\tau)) ) \cdot  \widetilde{S(w(\tau))}
   \les 
   \lambda^{1-c_S} \lambda^2 \|Sv\|^2
   + \lambda^{1-c_S} \| S(v(\tau)) \|^2 
   + \lambda^{1-c_S} \| S(w(\tau)) \|^2
   \les
   \lambda^2 Y(t) + Y(\tau), 
   \label{EQ206}
  \end{equation}
  where the first term can indeed be absorbed by the left-hand side. Thus, \eqref{EQ205} reduces to

 \eqnb
   \begin{split}
    &
    Y (t)
    + \sum_{S} 
   \lambda^{-c_S}  \int_\tau^t \underbrace{ \left(
        \|Sv\|_{1}^2
        + \|Sw_t\|^2
        + \lambda \|Sw\|_{1}^2
        \right) }_{=: \widetilde{D_S}}
    \les Y (\tau)
        + \sum_{S }
     \lambda^{-c_S} \int_\tau^t (
      L_S
      + N_S
      + C_S
      )
    .
   \end{split}
   \label{EQ205a}
  \eqne
We will show in Sections~\ref{sec_Ls}--\ref{sec_cS} below that
  \eqnb
  \begin{split}
    \sum_{S} \lambda^{-c_S}  \int_\tau^t  L_S
    &\les (\delta + C_\delta \lambda^{\epsilon} ) 
    \sum_{S} \lambda^{-c_S}  \int_{\tau}^t \widetilde{D_S} 
   + C_\delta \left( \lambda^\alpha  (t- \tau) Y(\tau)  +  \lambda^\kappa (t-\tau )^2 \int_\tau^t Y 
    \right) + h O( Y  )
    ,\\
  \sum_{S } \lambda^{-c_S} \int_\tau^t  N_S
   &\les (h + \lambda)\lambda^{-c_{\partial_{tt}}} \int_\tau^t \|\nabla v_{tt}\|^2
   + h O(Y)
   ,\\
 \sum_{S } \lambda^{-c_S} \int_\tau^t  C_{S}
    &\les (\delta + C_\delta \lambda^\epsilon) \sum_{S} \lambda^{-c_S} \int_{\tau}^t  \widetilde{D_S}
    + C_\delta 
    \Bigg(
      \lambda^\alpha (t-\tau)Y(\tau)
      + \lambda^\kappa (t-\tau)^2 \int_\tau^t Y
    \Bigg)
    + h O(Y)
    ,
   \end{split}
   \label{EQ103}
\eqne
for all $\delta >0 $. This concludes the proof of \eqref{est_apr}, since the first terms on the right-hand sides of \eqref{EQ103} can be absorbed by the left-hand side of \eqref{EQ205a} provided \eqref{need3} holds and $\gamma , \delta , \lambda$ are taken sufficiently small. In particular,  we now fix $\gamma>0$.\\

Before we begin proving \eqref{EQ103}, we emphasize that, since $c_S\geq 0$ for all $S$, we have 
\eqnb\label{star}
g(t)\leq h(t), \qquad X(t)\leq Y(t)
\eqne
for all $t$, so that we can readily make use of all estimates from Section~\ref{sec_prelim}.

\subsection{The linear terms $L^S$}\label{sec_Ls}
Here we show the first property in \eqref{EQ103}, namely that 
\[
    \sum_{S} \lambda^{-c_S}  \int_\tau^t  L_S
    \les (\delta + C_\delta \lambda^{\epsilon} ) 
    \sum_{S} \lambda^{-c_S}  \int_{\tau}^t \widetilde{D_S} 
   + C_\delta \left( \lambda^\alpha  (t- \tau) Y(\tau)    + \lambda^\kappa (t-\tau )^2 \int_\tau^t Y \right) + hO( Y  )
   \]
for every  $\delta >0 $.
 
 Indeed, recalling~\eqref{EQ95}, we first estimate
  \begin{equation}
   L_{S,1}(t,\tau)
   = 
   -\lambda \int_{\Omf} \nabla Sv : \nabla  \widetilde{S(w(\tau))} 
   .
   \label{EQ104}
  \end{equation}
For $S\in \{ \p_t , T\p_t , \p_{tt} \}$ this term vanishes, while for $S\in \{ \id , T, T^2 \}$ we have 
\begin{align}
   \begin{split}
    \lambda^{-c_{S }}  L_{S,1} 
    \leq C 
    \lambda^{1-c_{S }} \|\nabla S v\| \|\nabla S w(\tau)\|
    &\leq 
   \delta  {\lambda^{-c_{S }}} \|\nabla S v\|^2 
    +C_\delta \lambda^{2-c_{S }} \|\nabla S w(\tau)\|^2
    \\&\leq 
\delta    {\lambda^{-c_{S }}} \|\nabla S v\|^2
    + C_\delta \lambda^2  Y (\tau),
   \end{split}
   \label{EQ105}
  \end{align}
  as required. Next, we consider
  \begin{equation}
   L_{S,2}(t,\tau)
   = 
   \lambda \int_{\Omf} Sq : \div{\phi}
   ,
   \label{EQ106a}
  \end{equation}
which requires estimates for all~$S$.
First, note that $\div{v} = (\delta_{jk} - a_{jk})\partial_j v_k$ by the  divergence-free condition~\eqref{EQ06}. Hence,
  \begin{align}
   \begin{split}
    \lambda^{-c_{\id}}  L_{\id,2}
    &\les
    \lambda^{1-c_{\id}}  \|q\| \left( \int_\tau^t \|\div{v}\|
    +  \|\nabla w(\tau)\| \right)
    \\&\les
    \lambda^{1-c_{\id}}  \|q\| \left( \int_\tau^t \|I-a \|_2 \| \nabla v\|
    +  \|\nabla w(\tau)\| \right)
    \\&\les 
    hO(Y) 
    + \lambda^{1-c_{\id}} \|q\| \|\nabla w (\tau)\|
    ,
   \end{split}
   \label{EQ107}
  \end{align}
where we used $\|I-a\|_{2} \les h(t)$. 

For the second term on the far right, we use the $H^2$ estimate~\eqref{EQ34} and \eqref{EQ35a} to get
  \begin{align}
   \begin{split}
    \lambda^{1-c_{\id}} \|q\| \,\|\nabla w(\tau)\|
    &\les \lambda^{1-c_{\id}} \left( \| v_t \| + \| w_t \|_1 +\| T w_t \|_1 \right) \|\nabla w(\tau)\|
    \\
    &\leq    \delta \left(
 \lambda^{-c_{\p_t}} \|v_t\|^2
      + \lambda^{1-c_{\p_t}}\|w_t\|_{1}^2
      + \lambda^{1-c_{T\p_t}} \|\nabla T\partial_t w\|^2
      \right)
    \\
    &\hspace{3cm}+ C_\delta \left( \lambda^{1-2c_{\id } + c_{\p_t } } + \lambda^{1-2c_{\id } + c_{T\p_t }}   \right) \|\nabla w(\tau)\|^2\\
    &\leq \delta \sum_S\lambda^{-c_S} \widetilde{D_S} + C_\delta  \lambda^\alpha Y(\tau )    
   \end{split}
   \label{EQ108}
  \end{align}
for all $\delta >0$.
\begin{remark}\label{Rem1}
We note that, for estimating $\| q \|$, we could have used the bound
  \begin{equation}
   \|q\|
   \les
   \|v\|_{2}
   + \|\nabla q\|
   \les 
   \|v_t\|
   + \|v\|_{1}
   + \|\nabla Tv\|
   ,
   \label{EQ109}
  \end{equation}
  which is more fundamental in the sense that it arises from the $H^2$ estimate \eqref{EQ33} (and \eqref{EQ35a}) for the velocity field $v$ only, without making use of the elastic structure displacement $w$, as in~\eqref{EQ34}. However, such estimate would be insufficient, as multiplying by $\lambda^{1-c_{\id}}$ makes it too big to estimate using $v$ only.
  \end{remark}

For $S = T^2$, we have 
  \begin{align}
   \begin{split}
    L_{T^2,2}
    & = \lambda  \int_{\Omf} T^2 q \div \, T^2 \phi \\
    &\les 
    \lambda \|T^2 q\| \int_\tau^t \|T^2 \div{v}\|
    + \lambda \|T^2q\| \int_\tau^t \|\div{T^2v} - T^2\div{v}\|
    + \lambda \|T^2 q\|\, \|\nabla T^2 w(\tau )\|
    \\&\les
    hO(Y)
    + \lambda\|q\|_{2} \left( \int_\tau^t\|v\|_{2}+ \|\nabla T^2 w(\tau)\| \right)
    .
   \end{split}
   \label{EQ112}
  \end{align}
The first term following the first inequality was bounded by $hO(Y)$ since the divergence-free condition \eqref{EQ06} gives
  \begin{equation}
   \|\div  v\|_{2} = \| (\delta_{ij} - a_{ij} )\p_i v_j \|_2
   \les  \|(I-a)\nabla v\|_{2} 
   \les 
   \|I-a\|_{2}\|v\|_{3}
   .
   \label{EQ112a}
  \end{equation}
 We can thus use  $H^2$ and $H^3$ Dirichlet estimates~\eqref{EQ34} and~\eqref{EQ48} to estimate $\| v \|_2$ and $\| q\|_2$, respectively, to obtain 
  \begin{align}
   \begin{split}
    &\lambda^{-c_{T^2}} L_{T^2,2}  
    \leq  h O(Y) +  \lambda^{1-c_{T^2}} (
        \|v_t\|_{1}
        + \|v\|_{1}
        + \|\nabla T^2 v\|
        ) \left( \int_\tau^t ( \|v_t\|
           + \|\nabla w_t\|
           + \|\nabla Tw_t\|
        ) + \| \nabla T^2 w (\tau ) \| \right) 
     \\&\indeq
        \leq h O(Y) + \delta \left(\lambda^{-c_{\p_t}} \| v_t \|_1^2 + \lambda^{-c_{\id}} \| v \|_1^2 + \lambda^{-c_{T^2}} \| \nabla T^2 v \|_1^2  \right)
	\\&\indeq\indeq
        + C_\delta \lambda^{2-2c_{T^2}+  \min \{ {c_{\p_t} } , {c_{\id }} , {c_{T^2}} \} }  \left( (t-\tau ) \int_\tau^t \left( \| v_t \|^2 + \| \nabla w_t \|^2 + \| \nabla T w_t \|^2 \right) + \| \nabla T^2 w (\tau ) \|^2  \right)
	\\&\indeq
        \leq h O(Y) + \delta  \sum_S \lambda^{-c_S} \widetilde{D_S}
    \\&\indeq\indeq
        + C_\delta \lambda^{2-2c_{T^2}+  \min \{ {c_{\p_t} } , {c_{\id }} , {c_{T^2}} \}  }\left( \lambda^{\min \{ c_{\p_t}c_{T\p_t} \} }   (t-\tau ) \int_\tau^t Y + \lambda^{c_{T^2}} Y(\tau )\right) 
   \end{split}
   \label{EQ113}
  \end{align}
  for all $\delta >0$.  As for $L_{T,2}$, we have analogously (using \eqref{EQ34} and \eqref{EQ35a} to estimate $\| q\|_1$) 
 \begin{align}
   \begin{split}
    \lambda^{-c_{T}} L_{T,2}  
    &\leq h O(Y) +  \lambda^{1-c_T} \| q \|_1 \left( \int_\tau^t \| v \|_1  + \| \nabla T w (\tau ) \|  \right)  \\
    &\leq h O(Y) +  \lambda^{1-c_{T}} (
        \|v_t\|
        + \|\nabla w_t \|
        + \|\nabla T w_t\|
        ) \left( \int_\tau^t \| v \|_1 + \| \nabla T w (\tau ) \| \right)
   \\&\leq
   hO(Y)
   + \delta \big(
     \lambda^{-c_{\partial_t}} \|v_t\|^2
     + \lambda^{1-c_{\partial_t}} \|\nabla w_t\|^2
     + \lambda^{1-c_{T\partial_t}}\|\nabla T w_t\|^2
     \big)
   \\&\indeq
     + C_\delta\lambda^{1-2c_{T} + \min\{c_{\partial_t}, c_{T\partial_t}\}} 
     \Bigg(
       (t-\tau) \int_\tau^t (\|v_t\|^2 + \|\nabla w_t\|^2 + \|\nabla T w_t\|^2) 
       + \|\nabla T w(\tau)\|^2
     \Bigg)
   \\&\leq
    \delta \sum_{S} \lambda^{-c_S}  \widetilde{D_S}
    + C_\delta \lambda^{1-2c_T + 2\min\{c_{\partial_t}, c_{T\partial_t}\}}(t-\tau) \int_\tau^t Y
    + C_\delta \lambda^{1-c_T + \min\{c_{\partial_t}, c_{T\partial_t}\}} Y(\tau)
   \end{split}
   \label{EQ113_T}
  \end{align}
for all $\delta > 0$.

For $S = T\p_t$, we obtain
  \begin{align}
   \begin{split}
    \lambda^{-c_{T\p_t }}  L_{T\partial_t, 2}
    &=
    \lambda^{1-c_{T\p_t } } \int_{\Omf} Tq_t : \div {Tv}
    \\&\les
    \lambda^{1-c_{T\p_t }} \| q_t \|_1  \|T \div{v}\| 
    + \lambda^{1-c_{T\p_t}} \|q_t\|_{1} \|T\div{v} - \div{Tv}\|
    \\&\les 
   \delta \lambda^{-c_{\id}} \| v \|_1^2 + C_\delta  \lambda^{2-2c_{T\p_t} + c_{\id} }\| q_t \|_1^2 + h O(Y) \\&\les 
   \delta  \lambda^{-c_{\id}} \| v \|_1^2 + C_\delta  \lambda^{2-2c_{T\p_t} + c_{\id} }\left( \|v_{tt} \|^2 + \| v_t \|_1^2 + \| \nabla T v_t \|^2 \right) + h O(Y) \\&\les 
   (\delta  + C_\delta \lambda^\epsilon ) \sum_{S} \lambda^{-c_{S}} \widetilde{D_S} + C_\delta  \lambda^{2-2c_{T\p_t} + c_{\id} }\|v_{tt} \|^2 + h O(Y) \\&\leq
   (\delta  + C_\delta \lambda^\epsilon ) \sum_{S} \lambda^{-c_{S}} \widetilde{D_S} + C_\delta  \lambda^{\alpha } Y + h O(Y)
   \end{split}
   \label{EQ110}
  \end{align}
  for all $\delta >0$,   where we used the  $H^2$ Dirichlet estimate~\eqref{EQ67a} in the fourth line, and the definition \eqref{epsilon} of $\epsilon$ in the fifth line.
  
For $S = \partial_{tt}$ we note that
  \begin{align}
    \begin{split}
   \frac1\lambda \int_\tau^t L_{\partial_{tt},2}
    &= 
     \int_\tau^t \int_{\Omf} q_{tt} \div{v_{t}}
       =
     \Bigg[\int_{\Omf} q_t \partial_t((\delta_{jk} - a_{jk})\partial_j v_k)\Bigg]_\tau^t
    - \int_\tau^t \int_{\Omf} q_t \div{v_{tt}}
    .
  \end{split}
   \label{EQ113a}
  \end{align}
Using~\eqref{EQ102b}$_1$, the first term on the right-hand side of \eqref{EQ113a} is bounded by~$hO(Y )$. For the second term, we use~\eqref{EQ102b}$_{2}$ to get
  \begin{align}
   \begin{split}
    \int_\tau^t \int_{\Omf} q_t \div{v_{tt}}
    &=
     \int_\tau^t q_t \partial_{tt}((\delta_{jk} - a_{jk}) \partial_j v_k)
    \les 
     \int_\tau^t \|q_{t}\|\, \|\partial_{tt}((I-a)\nabla  v)\| 
    \\&\les
     \int_\tau^t \|q_t\|\, \|I-a\|_{2}\|\nabla v_{tt}\|
    + hO(Y)
    .
   \end{split}
   \label{EQ113b}
  \end{align}
  Thus
  \eqnb
  \begin{split}
  \lambda^{-c_{\p_{tt}}} \int_\tau^t L_{\p_{tt},2} &\les \delta \lambda^{- c_{\p_{tt}}} \int_\tau^t \| \nabla v_{tt} \|^2  + C_\delta  \lambda^{2-c_{\p_{tt}}} \int_\tau^t \|q_t\|^2 \|I-a\|_2^2
    + h O(Y)\\
    &\les \delta  \lambda^{- c_{\p_{tt}}} \int_\tau^t \| \nabla v_{tt} \|^2  + C_\delta h O(Y)
    \end{split}
  \eqne
  for all $\delta >0$. 

Finally, since $\|\div \, v \| \les h O(Y^{1/2})$, we obtain 
  \begin{equation}
   L_{\partial_t,2}
   \les
   h O(Y), 
   \label{EQ114}
  \end{equation}
using~\eqref{EQ102a}--\eqref{EQ102b}. 
\subsection{The nonlinear terms $N_S$}\label{sec_Ns}
Here we show the second inequality in \eqref{EQ103}, namely that 
  \[
   \sum_{S } \lambda^{-c_S}\int_\tau^t N_S
   \les
   (h + \lambda)\lambda^{-c_{\partial_{tt}}} \int_\tau^t \|\nabla v_{tt}\|^2
   + hO(Y)
   .
   \]
Indeed, we first note that for all cases other than $S = \partial_{tt}$, we have 
  \begin{equation}
   \rho_S N_S(t,\tau)
   \les
   hO(Y)
   ,
   \label{EQ116}
  \end{equation}
by the estimates~\eqref{EQ102a}--\eqref{EQ102b}, \eqref{star} and H\"older's inequality. 

For $S = \partial_{tt}$, we get for the first term in $N_S$,
  \begin{equation}
   \lambda^{-c_{\partial_{tt}}}\int_{\Omf} (\delta_{kj} - a_{j\ell}a_{k\ell})\partial_k Sv_i \partial_j Sv_i
    \les 
    \lambda^{-c_{\partial_{tt}}}\|I-aa^T\|_2\|\nabla v_{tt}\|^2
    \les 
    h \lambda^{-c_{\partial_{tt}}} \|\nabla v_{tt}\|^2
    .
    \label{EQ117}
  \end{equation}
For the latter two terms in $N_S$, we integrate by parts in time to obtain 
  \begin{align}
   \begin{split}
    &
    \lambda^{1-c_{\partial_{tt}}} \int_\tau^t \int_{\Omf} \partial_{tt}((\delta_{jk} - a_{j\ell}a_{k\ell})\partial_k v_i)  \partial_j \phi_i
    - \lambda^{1-c_{\partial_{tt}}}\int_\tau^t\int_{\Omf} \partial_{tt}((\delta_{ki} - a_{ki})q) \partial_k \phi_i
    \\&\indeq\indeq \les
    \lambda^{1-c_{\partial_{tt}}} \int_\tau^t \|\nabla v_{tt}\|(
        \|\partial_t ((I-aa^T)\nabla v)\|
        + \|\partial_t ((I-a)q)\|
        )
    \\&\indeq\indeq\indeq 
    + \lambda^{1-c_{\partial_{tt}}} \Bigg[
        \int_{\Omf} \partial_t ((\delta_{jk} - a_{j\ell} a_{k\ell})\partial_k v_i) \partial_j \partial_t v_i
        -\int_{\Omf} \partial_t((\delta_{ki} - a_{ki})q)\partial_k \partial_t v_i
        \Bigg]_\tau^t
    \\&\indeq\indeq\les
    \lambda^{1-c_{\partial_{tt}}} \int_\tau^t \|\nabla v_{tt} \|^2
    + hO(Y)
    ,
   \end{split}
   \label{EQ118}
  \end{align} 
where we also used \eqref{EQ102a}--\eqref{EQ102b} and~\eqref{star}. This completes the estimate for the nonlinear term. 

\subsection{The commutator terms $C_S$}\label{sec_cS}
Here we prove the last inequality in \eqref{EQ103}, namely that 
  \begin{equation}
    \sum_{S } 
    \lambda^{-c_S} \int_\tau^t C_{S,1}
    \les 
    (\delta + C_\delta \lambda^\epsilon) \sum_{S} \lambda^{-c_S} \int_{\tau}^t  \widetilde{D_S}
    + C_\delta 
    \Bigg(
      \lambda^\kappa (t-\tau)^2 \int_\tau^t Y
      + \lambda^\alpha (t-\tau)Y(\tau)
    \Bigg)
    + hO(Y)
   \label{EQ118a}
  \end{equation}
and
   \begin{equation}
    \sum_{S}\lambda^{-c_S} \int_\tau^t C_{S,2}
    \les 
    \big(
      \delta 
      + C_\delta \lambda^\epsilon
    \big)
    \sum_{S} \lambda^{-c_S} \int_\tau^t \widetilde{D_S}
    + hO(Y)
   \label{EQ130}
  \end{equation}
  for all $\delta> 0$.\vspace{1cm}\\
  
 In order to establish \eqref{EQ118a}, we  only consider $S \in \{T,T^2,T\partial_t\}$, as otherwise $C_{S,1}=0$.
We start with the first term in the definition~\eqref{EQ98} of $C_{S,1}$.
For $S = T$, we have 
  \begin{align}
   \begin{split}
    &
    -\lambda^{1-c_{T}} \int_{\Omf} (T\partial_j v_i - \partial_j Tv_i)\partial_j \phi_i
    \\&\indeq\indeq\les
    \lambda^{1-c_T}\|T\nabla v - \nabla T v\|
    \Bigg(
      \int_\tau^t \|\nabla Tv\|
      + \|\nabla Tw(\tau)\|
    \Bigg)
    \\&\indeq\indeq\les
    \lambda^{1-c_T}\|v\|_1 
    \Bigg(
      \int_{\tau}^t\|v\|_2
      + \|\nabla Tw(\tau)\|
    \Bigg)
    \\&\indeq\indeq\les
    \delta \lambda^{-c_{\id}}\|v\|_1^2
    + C_\delta\lambda^{2-2c_T + c_{\id}} 
      \Bigg(
       (t-\tau)\int_\tau^t \big(
        \|v_t\|^2
        + \|w_t\|_1^2
        + \|\nabla Tw_t\|^2
       \big)
       + \|\nabla Tw(\tau)\|^2
      \Bigg)
    \\&\indeq\indeq\les
    \delta \lambda^{-c_{\id}} \|v\|_1^2
    + C_\delta \lambda^{2-2c_T +c_{\id}}
      \Bigg(
       \lambda^{\min\{c_{\partial_t}, c_{T\partial_t}\}} (t-\tau)\int_\tau^t Y
       + \lambda^{c_T} Y(\tau)
      \Bigg)
    \\&\indeq\indeq\les
    \delta \sum_{S} \lambda^{-c_S} \widetilde{D_S} 
    + C_\delta \lambda^\kappa (t-\tau) \int_\tau^t Y
    + C_\delta \lambda^{\alpha} Y(\tau)
   \end{split}
   \label{EQ119}
  \end{align}
for all $\delta >0$, where we used \eqref{EQ34} in the third inequality.
For $S = T^2$, we obtain 
  \begin{align}
   \begin{split}
    &
    -\lambda^{1-c_{T^2}}  \int_{\Omf} (T^2\partial_j v_i - \partial_j T^2v_i)\partial_j \phi_i
    \\&\indeq\indeq\les
    \lambda^{1-c_{T^2}} \|v\|_2
    \Bigg(
    \int_\tau^t \|\nabla T^2v\|
    + \|\nabla T^2w(\tau)\|
    \Bigg)
    \\&\indeq\indeq\les
    \lambda^{1-c_{T^2}} 
    \big(
      \|v_t\| 
      + \|v\|_{1}
      + \|\nabla Tv\|
    \big)
    \Bigg(
    \int_\tau^t \|v\|_3 
    + \|\nabla T^2 w(\tau)\|
    \Bigg)
    \\&\indeq\indeq\les
    \delta \big(
      \lambda^{-c_{\partial_t}}\|v_t\|^2
      + \lambda^{-c_{\id}} \|v\|_{1}^2
      + \lambda^{-c_T} \|\nabla Tv\|^2
    \big)
    \\&\indeq\indeq\indeq
    + C_\delta\lambda^{2- 2c_{T^2} + \min\{c_{\partial_t}, c_{\id},c_{T}\}}
    \Bigg(
      (t-\tau) \int_\tau^t 
      \|v\|_3 
      + \|\nabla T^2 w\|
    \Bigg)
    \\&\indeq\indeq\les
    \delta \sum_S \lambda^{-c_S} \widetilde{D_S}
    + C_\delta \lambda^{ 2- 2c_{T^2} + \min\{c_{\partial_t}, c_{\id},c_{T}\}}
    \Bigg(
      (t-\tau)
      \int_\tau^t Y
      + \lambda^{c_{T^2}}Y(\tau)
    \Bigg)
    \\&\indeq\indeq\les
    \delta \sum_S \lambda^{-c_S} \widetilde{D_S}
    + C_\delta
    \Bigg(
      \lambda^\kappa (t-\tau)
      \int_\tau^t Y
      + \lambda^\alpha Y(\tau)
    \Bigg)
   \end{split}
   \label{EQ120}
  \end{align}
for all $\delta>0$, where we used \eqref{EQ33} in the second inequality.  Now, let $S = T\partial_t$.
We obtain
  \begin{align}
   \begin{split}
    &
    \lambda^{1-c_{T\partial_t}} \int_{\Omf} (T\partial_j \partial_t v_i - \partial_j T \partial_t v_i)\partial_j Tv_i
    \les
    \lambda^{1-c_{T\partial_t}} \|T\nabla v_t - \nabla T v_t\|_{L^2}\|v\|_{2}
    \\&\indeq\indeq\les 
    \delta\lambda^{-c_{\partial_t}} \|v_t\|_{1}^2
    + C_\delta\lambda^{2-2c_{T\partial_t} + c_{\partial_t}}
    \big(
      \|v_t\|^2
      + \|\nabla w_t\|^2
      + \|\nabla T w_t\|^2
    \big)
    \\&\indeq\indeq\les
    \big(
      \delta 
      + C_\delta \lambda^{ 2-c_{T\partial_t} + c_{\partial_t} + \min\{ c_{\partial_t}-1, c_{T\partial_t} -1\}}
    \big)
    \sum_{S} \lambda^{-c_S} \widetilde{D_S}
    \\&\indeq\indeq\les
    \big(
      \delta
      + C_\delta \lambda^\epsilon
    \big)
    \sum_S \lambda^{-c_S} \widetilde{D_S}
   \end{split}
   \label{EQ121}
  \end{align}
for all $\delta>0$, where we used \eqref{EQ34} in the second inequality. Next, we treat the second and third terms in tandem. 
For $S = T^2$, we get
  \begin{align}
   \begin{split}
    &
    \lambda^{1-c_{T^2}} \int_{\Omf} (T^2 \partial_j (a_{j\ell} a_{k\ell} \partial_k v_i) - \partial_j T^2 (a_{j\ell}a_{k\ell} \partial_k v_i)) \phi_i
    - \lambda^{1-c_{T^2}} \int_{\Omf} (T^2 \partial_k(a_{ki}q) - \partial_k T^2(a_{ki}q))\phi_i
    \\&\indeq\indeq\les
    \lambda^{1-c_{T^2}}
    \big(
      \|aa^T\nabla v\|_{2}
      + \|aq\|_{2}
    \big)
    \Bigg(
      \int_\tau^t \|T^2 v\|
      + \|T^2 w(\tau)\|
    \Bigg)
    \\&\indeq\indeq\les
    \lambda^{1-c_{T^2}} 
    \big(
      \|v\|_{3}
      + \|q\|_{2}
    \big)
    \Bigg(
      \int_\tau^t \|v\|_2
      + \|\nabla T w(\tau)\|
    \Bigg) + h O(Y)
    \\&\indeq\indeq\les
    \delta\big(
      \lambda^{-c_{\partial_t}}\|v_t\|_{1}^2
      + \lambda^{-c_{\id}}\|v\|_{1}^2
      + \lambda^{-c_{T^2}}\|\nabla T^2 v\|^2
    \big)
    \\&\indeq\indeq\indeq
    + C_\delta\lambda^{2-2c_{T^2} +\min\{c_{\partial_t}, c_{\id}, c_{T^2}\}}
    \Bigg(
      (t-\tau) \int_\tau^t 
      (
        \|v_t\|
        + \|w_t\|_1
        + \|Tw_t\|_1
      )
      + \|\nabla T w(\tau)\|^{2}
    \Bigg) + h O(Y)
    \\&\indeq\indeq\les
    \delta \sum_S \lambda^{-c_S} \widetilde{D_S}
    + C_\delta\lambda^{2-2c_{T^2} +\min\{c_{\partial_t}, c_{\id}, c_{T^2}\}}
    \Bigg(
      \lambda^{\min\{c_{\partial_t}, c_{T\partial_t}\}}(t-\tau) \int_\tau^t Y
      + \lambda^{c_T}Y(\tau)
    \Bigg)+ h O(Y)
    \\&\indeq\indeq\les
    \delta \sum_S \lambda^{-c_S} \widetilde{D_S}
    + C_\delta 
    \Bigg(
      \lambda^\kappa (t-\tau) \int_\tau^t Y
      + \lambda^\alpha Y(\tau)
    \Bigg)+ h O(Y)
   \end{split}
   \label{EQ122}
  \end{align}
for all $\delta>0$, where we used \eqref{i-a} in the second inequality, and both~\eqref{EQ34} and~\eqref{EQ48} in the third inequality. 

In a similar fashion, we get for $S = T$, 
  \begin{align}
   \begin{split}
    &
    \lambda^{1-c_{T}} \int_{\Omf} (T \partial_j (a_{j\ell} a_{k\ell} \partial_k v_i) - \partial_j T (a_{j\ell}a_{k\ell} \partial_k v_i)) \phi_i
    - \lambda^{1-c_{T}} \int_{\Omf} (T \partial_k(a_{ki}q) - \partial_k T(a_{ki}q))\phi_i
    \\&\indeq\indeq\les
    \delta \sum_{S} \lambda^{-c_S} \widetilde{D_S}
    + C_\delta \lambda^{2-2c_T + \min\{c_{\partial_t}, c_{\id}, c_T\}}
    \Bigg(
      \lambda^{\min\{c_{\partial_t},c_{T\partial_t}\}} (t-\tau) \int_\tau^t Y
      + \lambda^{c_{\id}} Y(\tau)
    \Bigg),
   \end{split}
   \label{EQ122b}
  \end{align} 
for all $\delta > 0$. 
For $S = T\partial_t$, we note that 
  \begin{equation}
   \|\partial_t (aa^T \nabla v)\|_{1}
   \les 
   \|\partial_t(aa^T) \nabla v\|_{1}
   + \|aa^T \nabla v_t\|_{1}
   \les
   hY^{1/2}
   + \|v_t\|_{2}
   \label{EQ122a}
  \end{equation} 
  and similarly $\| \p_t (aq ) \|_1 \les hY^{1/2} + \| q_t \|_1$, by \eqref{i-a}--\eqref{EQ102b}. Hence, 
  \begin{align}
   \begin{split}
    &
    \lambda^{1-c_{T\partial_t}} \int_{\Omf}(T\partial_j\partial_t(a_{j\ell} a_{k\ell} \partial_k v_i) - \partial_j T\partial_t(a_{j\ell} a_{k\ell} \partial_k v_i)) \phi_i
    - \lambda^{1-c_{T\partial_t}} \int_{\Omf} (T \partial_k\partial_t (a_{ki} q) - \partial_k T\partial_t(a_{ki} q)) \phi_i
    \\&\indeq\indeq\les 
    \lambda^{1-c_{T\partial_t}}
    \big(
        \|v_t\|_{2}
        + \|q_t\|_{1}
    \big)
    \|v\|_{1}
    + hO(Y)
    \\&\indeq\indeq\les 
    \delta \lambda^{-c_{\id}}\|v\|_{1}^2
    + C_\delta\lambda^{2-2c_{T\partial_t} + c_{\id}}
    \big(
      \|v_{tt}\|^2
      + \|v_{t}\|_{1}^2
      + \|\nabla T v_t\|^2
    \big)
    + hO(Y)
    \\&\indeq\indeq\les
    \big(
      \delta 
      + C_\delta \lambda^\epsilon
    \big) 
    \sum_S \lambda^{-c_S} \widetilde{D_S}
    + hO(Y)
   \end{split}
   \label{EQ123}
  \end{align} 
  for all $\delta>0$, where we used \eqref{EQ67a} in the second inequality. 
Next, we analyze the fourth and fifth terms in~$C_{S,1}$. 
Recall that~$\phi|_{\Gamma_c} = Sw(t)|_{\Gamma_c}$.
Then for $S = T^2$, we obtain through Lemma~\ref{L01} 
  \begin{align}
   \begin{split}
    &
    \lambda^{1-c_{T^2}} \int_{\Ome} (T^2 \Delta w - \Delta T^2 w) \cdot T^2 w
    - \lambda^{1-c_{T^2}} \int_{\Gac} (T^2(N_j \partial_j w) - N_j \partial_j T^2 w) \cdot T^2 w
    \\&\indeq\indeq\les
    \lambda^{1-c_{T^2}} \|w\|_{3} \|\nabla Tw\|
    + \lambda^{1-c_{T^2}} \|T^2 (N_j \partial_j w) - N_j \partial_j T^2 w\|_{H^{1/2}(\Gamma_c)} \|Tw\|_{H^{1/2}(\Gamma_c)}
    \\&\indeq\indeq\les
    \lambda^{1-c_{T^2}} \|w\|_{3}\|\nabla T w\|
    \\&\indeq\indeq\les
    \delta\big(
      \lambda^{1-c_{\partial_t}}\|w_t\|_{1}^2
      + \lambda^{1-c_{\partial_{tt}}}\|w_{tt}\|_{1}^2
      + \lambda^{1-c_{\id}}\|\nabla w\|^2
      + \lambda^{1-c_{T^2}}\|\nabla T^2 w\|^2
        \big) 
    \\&\indeq\indeq\indeq
    + C_\delta\lambda^{1 - 2c_{T^2} + \min\{c_{\partial_t},c_{\partial_{tt}},c_{\id},c_{T^2}\}}
      \|\nabla Tw\|^2
    \\&\indeq\indeq\les
    \big(
      \delta 
      + C_\delta \lambda^{-2 c_{T^2} + c_T + \min\{c_{\partial_t},c_{\partial_{tt}},c_{\id},c_{T^2}\}}
    \big)
    \sum_S  \lambda^{-c_S} \widetilde{D_S}
    \\&\indeq\indeq\les
    \big(
      \delta
      + C_\delta \lambda^\epsilon
    \big)
    \sum_S  \lambda^{-c_S} \widetilde{D_S}
   \end{split}
  \label{EQ124}
  \end{align}
for all $\delta >0$, where we used \eqref{EQ44} in the second inequality.
For $S=T$, we perform an analogous computation using lower order estimates to obtain
  \begin{align}
   \begin{split}
    &
    \lambda^{1-c_{T}} \int_{\Ome} (T \Delta w - \Delta T w) \cdot T w
    - \lambda^{1-c_{T}} \int_{\Gac} (T(N_j \partial_j w) - N_j \partial_j T w) \cdot T w
    \\&\indeq\indeq\les
    \big(
     \delta
     + C_\delta \lambda^{-2c_T + c_{\id} + \min\{c_{\partial_t}, c_{\partial_{tt}}, c_{\id}, c_T\}}
    \big)
    \sum_S  \lambda^{-c_S} \widetilde{D_S}
   \end{split}
   \label{EQ124a}
  \end{align}
for all $\delta > 0$. 
For $S = T\partial_t$, we have through Lemma~\ref{L01} 
  \begin{align}
   \begin{split}
    &
    \lambda^{1-c_{T\partial_t}}  \int_{\Ome} (T\Delta w_t - \Delta T w_t)\cdot Tw_t
    - \lambda^{1-c_{T\partial_t}} \int_{\Gac} (T(N_j \partial_j \partial_t w_i) - N_j \partial_j T \partial_tw_i) T \partial_t w_i
    \\&\indeq\indeq\les 
    \lambda^{1-c_{T\partial_t}} \|w_t\|_{2}\|w_t\|_{1}
    \\&\indeq\indeq\les
    \delta\big(
      \lambda^{-c_{\partial_{tt}}}\|\partial_{tt}w_t\|^2
      + \lambda^{1-c_{\partial_{tt}}}\|w_{tt}\|^2
      + \lambda^{1-c_{\partial_t}}\|w_t\|_{1}^2
      + \lambda^{1-c_{T\partial_t}}\|Tw_t\|_{1}^2
      \big)
    \\&\indeq\indeq\indeq
    + C_\delta\lambda^{ 1 -2 c_{T\p_t } + \min\{c_{\partial_{tt}},c_{\partial_t}, c_{T\partial_t}\}}
    \|w_t\|_{1}^2
    \\&\indeq\indeq\les
    \big(
      \delta 
      + C_\delta \lambda^{-2c_{T\partial_t} + c_{\partial_t}+ \min\{c_{\partial_{tt}},c_{\partial_t}, c_{T\partial_t}\}}
    \big) 
    \sum_S \lambda^{-c_S} \widetilde{D_S}
    \\&\indeq\indeq\les
    \big(
      \delta
      + C_\delta \lambda^\epsilon
    \big)
    \sum_S \lambda^{-c_S} \widetilde{D_S}
   \end{split}
   \label{EQ125}
  \end{align}
for all $\delta >0$, where, in the second inequality, we used the first inequality in~\eqref{EQ45}. 
Finally, we estimate the last two boundary integrals.
For $S = T^2$, we obtain
  \begin{align}
   \begin{split}
    &
    \lambda^{1-c_{T^2}} \int_{\Gab \cup \Gac} (T^2(N_j^fa_{j\ell}a_{k\ell} \partial_k v_i) - N_j^fT^2(a_{j\ell} a_{k\ell} \partial_k v_i))\phi_i
    -\lambda^{1-c_{T^2}} \int_{\Gab \cup \Gac} (T^2(N_k^f a_{ki}q) - N_k^fT^2(a_{ki}q))\phi_i
    \\&\indeq\indeq\les
    \lambda^{1-c_{T^2}} 
    \big(
        \|v\|_{H^{5/2}(\Gab \cup \Gac)}
        + \|q\|_{H^{3/2}(\Gab\cup \Gac)}
    \big)
    \Bigg(
      \int_\tau^t  \|T v\|_{1}
      +  \|T w(\tau)\|_1 
    \Bigg)
    \\&\indeq\indeq\les
    \lambda^{1-c_{T^2}}
    \big(
      \|v\|_{3}^2
      + \|q\|_{2}^2
    \big)
    \Bigg(
      \int_\tau^t \|\nabla Tv\|
      + \|\nabla T w(\tau)\|
    \Bigg)
    ,
   \end{split}
   \llabel{EQ126}
  \end{align}
from which the estimate follows exactly as in~\eqref{EQ122}.
Note the use of Lemma~\ref{L01} in the first inequality.
The estimates for $S = T\partial_t$ and $S = T$ are also identical to that of the second and third terms of $C_{S,1}$ after using Lemma~\ref{L01}. 
This completes the proof of the lemma.\\

We now verify~\eqref{EQ130}. To this end, we first note that  all terms vanish for $S = \id$. 
We start with the first line in~\eqref{EQ98}.
In the case when $S = T^2$ for the first term, three derivatives fall on~$a$. 
Since we have no control on $\|a\|_{3}$, we integrate by parts. 
We first see that
  \begin{align}
   \begin{split}
    &
    \lambda^{-c_{T^2}}\int_{\Omf} (T^2 \partial_j(a_{j\ell}a_{k\ell} \partial_k v_i) - \partial_j(a_{j\ell}a_{k\ell} \partial_k T^2 v_i))T^2 v_i
    \\&\indeq =
    \lambda^{-c_{T^2}}\int_{\Omf} (T^2 \partial_j(a_{j\ell}a_{k\ell} \partial_k v_i) - \partial_j T^2(a_{j\ell}a_{k\ell} \partial_k v_i))T^2 v_i
    \\&\indeq\indeq
    + \lambda^{-c_{T^2}}\int_{\Omf} \partial_j (T^2 (a_{j\ell} a_{k\ell} \partial_k v_i)- a_{j\ell}a_{k\ell} \partial_k T^2 v_i) T^2 v_i
    \\&\indeq = 
    I_1 + I_2
    .
   \end{split}
   \label{EQ131}
  \end{align}
Note that $I_1 \les \|v\|_{3}\|\nabla Tv\| + hO(Y)$. 
Integrating by parts in $I_2$ gives us 
  \begin{align}
   \begin{split}
    I_2
    &=
    -\lambda^{-c_{T^2}}\int_{\Omf} (T^2(a_{j\ell}a_{k\ell} \partial_k v_i) - a_{j\ell} a_{k\ell} \partial_k T^2 v_i) \partial_j T^2 v
    \\&\indeq
    + \lambda^{-c_{T^2}}\int_{\partial \Omf} N_j (T^2(a_{j\ell}a_{k\ell} \partial_k v_i) - a_{j\ell} a_{k\ell} \partial_k T^2 v_i)T^2 v_i
    \\&\les
     \lambda^{-c_{T^2}}\|v\|_{3}\|v\|_{2}
     + hO(Y)
    ,
   \end{split}
   \label{EQ132}
  \end{align}
where we used Lemma~\ref{L01} for the boundary integral.
Also note that the commutators in~\eqref{EQ132} give extra terms of the form $T^2(aa^T)\nabla v$ and $T(aa^T)T\nabla v$.
Indeed, these terms are bounded by~$hO(Y)$.
For example,
  \begin{equation}
   \|T^2 (aa^T)\nabla v\|\|\nabla T^2 v\|
   \les
   \|T^2\big(aa^T\big)\| \|v\|_{3}^2
   \les
   hO(Y)
   .
   \label{EQ132a}
  \end{equation}
Without integrating by parts, we find for the pressure term in the first line of the definition of $C_{S,2}$ in~\eqref{EQ98},
  \begin{equation}
   -\lambda^{-c_{T^2}}\int_{\Omf} (T^2(a_{ki} \partial_k q) - a_{ki} \partial_kT^2q)T^2 v_i
   \les
   \lambda^{-c_{T^2}}\|q\|_{2}\|v\|_{2}
   + hO(Y)
   .
   \label{EQ133}
  \end{equation}
Finally, note that
  \begin{align}
    \begin{split}
    &
   \lambda^{-c_{T^2}}(\|v\|_{3} + \|q\|_{2}) \|v\|_{2}
  \\&\indeq\indeq\les
   \delta\big(
       \lambda^{-c_{\partial_t}}\|v_t\|_{1}^2
       + \lambda^{-c_{\id}}\|v\|_{1}^2
       + \lambda^{-c_{T^2}}\|\nabla T^2 v\|^2
       \big)
   \\&\indeq\indeq\indeq
   + C_\delta \lambda^{-2c_{T^2} + \min\{c_{\partial_t},c_{\id},c_{T^2}\}}
       \big(
       \|v_t\|^2
       + \|v\|_{1}^2
       + \|\nabla Tv\|^2
       \big)
   \\&\indeq\indeq\les
   \big(\delta +C_\delta \lambda^{-2c_{T^2} + \min\{c_{\partial_t},c_{\id},c_{T^2}\}+ \min\{c_{\partial_t}, c_{\partial_{tt}}, c_T\}}\big) \sum_S \lambda^{-c_S} \widetilde{D_S}
   \\&\indeq\indeq\les
   \big(
     \delta
     + C_\delta \lambda^\epsilon
   \big)
   \sum_S \lambda^{-c_S} \widetilde{D_S}
  \end{split}
   \label{EQ134}
  \end{align}
for all $\delta >0$, where we used \eqref{EQ48} in the first inequality.
In an analogous manner, we get for $S = T$, 
  \begin{align}
   \begin{split}
    &
    \lambda^{-c_T} \int_{\Omf} (T \partial_j (a_{j\ell} a_{k\ell} \partial_k v_i) - \partial_j (a_{j\ell} a_{k\ell} \partial_k Tv_i)) Tv_i
    -\lambda^{-c_T} \int_{\Omf} (T(a_{ki} \partial_k q) - a_{ki} \partial_k T q) Tv_i
    \\&\indeq\indeq \les
    \big(
      \delta 
      + C_\delta \lambda^{-2c_T + c_{\id} + \min\{c_{\partial_t}, c_{\id}, c_{T}\}}
    \big)
    \sum_S \widetilde{D_S} + h O(Y)
   \end{split}
   \label{EQ134a}
  \end{align}
for all $\delta > 0$.
Next, we consider $S = T\partial_t$. 
We get from~\eqref{EQ67},
  \begin{align}
   \begin{split}
    &
    \lambda^{-c_{T\partial_t}} \int_{\Omf}(T\partial_t \partial_j (a_{j\ell} a_{k\ell} \partial_k v_i) - \partial_j (a_{j\ell} a_{k\ell} \partial_k T\partial_tv_i))T\partial_t v_i
    -\lambda^{-c_{T\partial_t}} \int_{\Omf} (T\partial_t(a_{ki} \partial_k q) - a_{ki} \partial_k Tq_t) T\partial_t v_i
    \\&\indeq\indeq\les
    \lambda^{-c_{T\partial_t}} (\|v_t\|_{2} + \|q_t\|_{1})\|v_t\|_{1}
    + hO(Y)
    \\&\indeq\indeq\les
    \delta\big(
        \lambda^{-c_{\partial_{tt}}}\|v_{tt}\|^2
        + \lambda^{-c_{\partial_t}}\|v_t\|_{1}^2
        + \lambda^{-c_{T\partial_t}}\|\nabla Tv_t\|^2
        \big) 
    \\&\indeq\indeq\indeq
    + C_\delta\lambda^{-2c_{T\partial_t} + \min\{c_{\partial_{tt}}, c_{\partial_t}, c_{T\partial_t}\}}\|v_t\|_{1}^2
    + hO(Y)
    \\&\indeq\indeq\les
    \big(\delta + C_\delta \lambda^{ -2c_{T\partial_t} + c_{\partial_t} + \min\{c_{\partial_{tt}}, c_{\partial_t}, c_{T\partial_t}\}}\big) \sum_S \lambda^{-c_S} \widetilde{D_S}
    + hO(Y)
    \\&\indeq\indeq\les
    \big(
      \delta
      + C_\delta \lambda^\epsilon
    \big)
    \sum_S \lambda^{-c_S} \widetilde{D_S}
    + hO(Y)
   \end{split}
   \llabel{EQ135}
  \end{align}
for all $\delta >0$, where we  used \eqref{EQ67a} in the second inequality.
For $S = \partial_{tt}$, the highest-order terms within the commutators cancel. 
Indeed, expanding the time derivatives, we obtain 
  \begin{align}
   \begin{split}
    &
    \lambda^{-c_{\partial_{tt}}}\int_{\Omf} (\partial_{tt} \partial_j (a_{j\ell}a_{k\ell} \partial_k v_i) -\partial_j (a_{j\ell}a_{k\ell} \partial_k \partial_{tt}v_i)) \partial_{tt} v_i
    + \lambda^{-c_{\partial_{tt}}}\int_{\Omf} (\partial_{tt}(a_{ki} \partial_k q) - a_{ki} \partial_k q_{tt}) \partial_{tt} v_i
    \\&\indeq=
    \lambda^{-c_{\partial_{tt}}}\int_{\Omf} (\partial_j(\partial_{tt}(a_{j\ell}a_{k\ell}) \partial_k v_i) + \partial_{tt}a_{ki} q) \partial_{tt} v_i
    + 2\lambda^{-c_{\partial_{tt}}}\int_{\Omf} (\partial_j(\partial_t (a_{j\ell}a_{k\ell}) \partial_k \partial_tv_i) + \partial_t a_{ki} q_t)\partial_{tt}v_i
    \\&\indeq \les
    \lambda^{-c_{\partial_{tt}}} 
    \big(
      \|\partial_{tt}(aa^T)\nabla v\|_{1} 
      + \|a_{tt} q\|\|v_{tt}\|
    \big)
    + \lambda^{-c_{\partial_{tt}}} 
    \big(
      \|\partial_t (aa^T)\nabla v_t\|_{1} 
      + \|a_tq_t\|
    \big) \|v_{tt}\|
    \\&\indeq\les
    hO(Y)
    ,
   \end{split}
   \llabel{EQ136}
  \end{align}
since $\|a_{tt}\|_{1}, \|a_{t}\|_{2} \les h$.
The case when  $S = \partial_t$ is similar. 

The second line in the definition of $C_{S,2}$ in~\eqref{EQ98} is handled identically to the first line after using Lemma~\ref{L01}. 

Finally, we estimate the third line of the definition \eqref{EQ98} of~$C_{S,2}$. 
Considering only the first term, we rewrite 
  \begin{equation}
   \lambda^{-c_S}\int_{\Omf} Sq a_{ki} \partial_k Sv_i
   = 
   \lambda^{-c_S}\int_{\Omf} Sq a_{ki} (\partial_k Sv_i - S \partial_k v_i)
   + \lambda^{-c_S}\int_{\Omf} Sq (a_{k i} S \partial_k v_i  - S(a_{ki} \partial_k v_i))
   =
   I_{S,1} + I_{S,2}
   ,
   \label{EQ150}
  \end{equation}
where we used the divergence-free condition in \eqref{EQ06} in~$I_{S,2}$.
For $S = \id$, both integrals vanish. 
In fact, 
  \begin{equation}
   I_{S,2}
   \les
   hO(Y)
   ,
   \label{EQ151}
  \end{equation}
  for all $S \in \{T,\partial_t, T\partial_t, T^2\}$, since for those $S$ a derivative falls directly on $a$ (so that we can use \eqref{EQ20} or \eqref{EQ25} to obtain an extra factor of $O(Y^{1/2})$), and there are less than $2$ time derivatives falling on $q$ (so that we do not need to integrate by parts in time; and so we can use \eqref{EQ66} or \eqref{EQ67} to control $q_t$).   

To estimate $I_{\partial_{tt},2}$, we expand the commutator to obtain
  \begin{equation}
    I_{\partial_{tt},2}
    = 
    - \lambda^{-c_{\partial_{tt}}} \int_{\Omf} q_{tt} \partial_{tt} a_{ki} \partial_k v_i
    - 2\lambda^{-c_{\partial_{tt}}} \int_{\Omf} q_{tt} \partial_ta_{ki} \partial_k \partial_t v_i
    =
    I_{\partial_{tt},21} 
    + I_{\partial_{tt},22}
    .
    \label{EQ152}
  \end{equation}
We estimate only the first term, as the second is easier.
Integrating by parts in time, we get
  \begin{align}
    \int_\tau^t I_{\partial_{tt},21}
    &= 
    -\lambda^{-c_{\partial_{tt}}} \Bigg[
            \int_{\Omf} q_t \partial_{tt} a_{ki} \partial_k v_i
    \Bigg]_\tau^t
    + \lambda^{-c_{\partial_{tt}}} \int_\tau^t \int_{\Omf} q_t \partial_{ttt}a_{ki} \partial_k v_i
    + \lambda^{-c_{\partial_{tt}}}\int_\tau^t q_t \partial_{tt} a_{ki} \partial_k \partial_t v_i
    \label{EQ153}
    \\&\les
    \lambda^{-c_{\partial_{tt}}} \int_\tau^t \|q_t\|_{1} \|v\|_{2} \|a_{ttt}\|
    + hO(Y)
    ,
    \label{EQ155}
  \end{align}
where we used $\|a_{tt}\|_{2} \les h$ in the first and third terms on the right-side of~\eqref{EQ153} and the embedding $H^1 \subset L^4$ in the middle term. 
From~\eqref{EQ19}, we have 
  \begin{equation}
   \|a_{ttt}\|
   \les 
   \|\nabla v_{tt}\|\|\nabla \eta\|_{2}
   + \|v\|_{3} \|v_t\|_{1}
   ,
   \label{EQ156}
  \end{equation}
and so, since $\|\nabla \eta\|_{2} \les 1 + h$ (recall \eqref{i-a}), we find that
  \begin{equation}
  \begin{split}
   \int_\tau^t I_{\partial_{tt},21}
   &\les 
   \int_\tau^t\lambda^{-c_{\partial_{tt}}}\|q_t\|_{1} \|v\|_{2}\|\nabla v_{tt}\|
   + hO(Y)
   \\&\les
   \int_\tau^t \delta \lambda^{-c_{\partial_{tt}}}\|\nabla v_{tt}\|^2
   + C_\delta hO(Y)
   \\&\les
   \delta \sum_{S}  \int_\tau^t \lambda^{-c_S} \widetilde{D_S}
   + hO(Y)
  \end{split}
   \label{EQ157}
  \end{equation}
for all $\delta >0$. 
Next, we bound~$I_{S,1}$. 
We need only to consider $S \in \{T,T^2,T\partial_t\}$.
We see that \eqref{EQ48} and the first line of \eqref{EQ34} imply that
  \begin{align}
   \begin{split}
    I_{T^2,1}
    &\les
    \lambda^{-c_{T^2}}\|q\|_{2} \|\nabla T^2 v - T^2 \nabla v\|
    \les
    \lambda^{-c_{T^2}}\|q\|_{2}\|v\|_{2}
    \\&\les
    \delta \big(
        \lambda^{-c_{\partial_t}}\|v_t\|_{1}^2 
        + \lambda^{-c_{\id}}\|v\|_{1}^2
        + \lambda^{-c_{T^2}}\|\nabla T^2 v\|^2
        \big)
    \\&\indeq
    + C_\delta \lambda^{-2c_{T^2} + \min\{c_{\partial_t}, c_{\id},c_{T^2}\}}
    \big(
        \|v_t\|^2
        + \|v\|_{1}^2
        + \|\nabla Tv\|^2
        \big)
    \\&\les
    (\delta + C_\delta \lambda^{ -2c_{T^2} + \min\{c_{\partial_t}, c_{\id}, c_{T^2}\} + \min\{c_{\partial_t}, c_{\id}, c_{T}\}}) \sum_S \lambda^{-c_S} \widetilde{D_S}
    \\&\les
    \big(
      \delta
      + C_\delta \lambda^\epsilon
    \big)
    \sum_S \lambda^{-c_S} \widetilde{D_S}
   \end{split}
   \label{EQ158}
  \end{align}
for all $\delta >0$. 
For $S = T$, we get
\begin{align}
   \begin{split}
    I_{T,1}
    &\les
    \lambda^{-c_T} \|q\|_1 \|\nabla T v - T\nabla v\|
    \les \lambda^{-c_T} \|q\|_1\|v\|_1
    \\&\les
    \delta \big(
      \lambda^{-c_{\partial_t}} \|v_t\|^2
      + \lambda^{1-c_{\partial_t}} \|w_t\|_1^2
      + \lambda^{1-c_{T\partial_t}} \|\nabla T w_t\|^2
      \big)
    \\&\indeq
    + C_\delta \lambda^{-1-2c_T + \min\{c_{\partial_t}, c_{T\partial_t}\}} \|v\|_1^2
    \\&\les
    \big(
      \delta
      + C_\delta \lambda^{-1 -2c_T + c_{\id} + \min\{c_{\partial_t}, c_{T\partial_t}\} }
    \big)
    \sum_S \lambda^{-c_S} \widetilde{D_S}
    \\&\les
    \big(
      \delta
      + C_\delta \lambda^{\epsilon}
    \big)
    \sum_{S} \lambda^{-c_S} \widetilde{D_S}
    ,
   \end{split}
   \label{EQ158a}
  \end{align}
for all $\delta > 0$.
Moreover, we get from~\eqref{EQ67a} that 
  \begin{align}
   \begin{split}
    I_{T\partial_t,1}
    &\les
    \lambda^{-c_{T\partial_t}} \|q_t\|_{1} \|\nabla Tv_t - T\nabla Tv_t\|
    \les 
    \lambda^{-c_{T\partial_t}} \|q_t\|_{1} \|v_t\|_{1}
    \\&\les
    \delta \big(
        \lambda^{-c_{\partial_{tt}}}\|v_{tt}\|^2
        + \lambda^{-c_{\partial_t}}\|v_t\|_{1}^2
        + \lambda^{-c_{T\partial_t}}\|\nabla Tv_t\|^2
        \big)
    \\&\indeq
    + C_\delta\lambda^{-2c_{T\partial_t} + \min\{c_{\partial_{tt}}, c_{\partial_t}, c_{T\partial_t}\}}
    \|v_t\|_{1}^2
    + hO(Y)
    \\&\les
    \big(\delta + C_\delta \lambda^{-2c_{T\partial_t} + c_{\partial_t} + \min\{c_{\partial_{tt}}, c_{\partial_t}, c_{T\partial_t}\}}\big) \sum_S \lambda^{-c_S} \widetilde{D_S}
    \\&\les
    \big(
      \delta
      +C_\delta \lambda^\epsilon
    \big)
    \sum_S \lambda^{-c_S} \widetilde{D_S}
   \end{split}
   \label{EQ159}
  \end{align}
for all $\delta >0$.
Now, we handle the last two terms of the third line of $C_{S,2}$ in tandem.
Note that they vanish for $S = \partial_t$ and $S = \partial_{tt}$. 
For $S = T$, we obtain 
  \begin{align}
   \begin{split}
    &
    \lambda^{-c_T} \int_{\Omega_e} (T\Delta w - \Delta T w)\cdot Tw_t
    + \lambda^{-c_T}\int_{\Gamma_c} (T(N_j \partial_j w_i) - N_j \partial_j Tw_i) T\partial_t w_i
    \\&\indeq\indeq\les
    \lambda^{-c_T} \|w\|_{2}\|w_t\|_{1}
    \\&\indeq\indeq\les
    \delta\big( 
       \lambda^{1-c_{\partial_t}}\|w_{t}\|^2
       + \lambda^{1-c_{\partial_{tt}}}\|w_{tt}\|^2
       + \lambda^{1-c_{\id}}\|w\|_{1}^2
       + \lambda^{1-c_{T}}\|\nabla Tw\|^2
        \big)
    \\&\indeq\indeq\indeq
    + C_\delta\lambda^{-1 -2c_T + \min\{c_{\partial_t}, c_{\partial_{tt}}, c_{\id}, c_T\}}
        \|w_{t}\|_{1}^2
    \\&\indeq\indeq\les
    \big(\delta + C_\delta \lambda^{-2 -2c_T + c_{\partial_t} + \min\{c_{\partial_t}, c_{\partial_{tt}}, c_{\id}, c_T\}}\big)
    \sum_S \lambda^{-c_S}\widetilde{D_S}
    \\&\indeq\indeq\les
    \big(
      \delta 
      + C_\delta\lambda^\epsilon
    \big)
    \sum_S \lambda^{-c_S} \widetilde{D_S}
   \end{split}
   \label{EQ200}
  \end{align}
for all $\delta >0$, where we used \eqref{EQ43} in the second inequality.
For $S = T^2$, we find that 
  \begin{align}
   \begin{split}
    &
    \lambda^{-c_{T^2}}\int_{\Omega_e} (T^2\Delta w - \Delta T^2 w) \cdot T^2 w_t
    + \lambda^{-c_{T^2}}\int_{\Gamma_c} (T^2(N_j \partial_j w_i) - N_j\partial_j T^2 w_i) T^2\partial_t w_i
    \\&\indeq\indeq\les
    \lambda^{-c_{T^2}}\|w\|_{3} \|\nabla Tw_t\|
    \\&\indeq\indeq\les
    \delta \big(
        \lambda^{1-c_{\partial_t}}\|w_t\|_{1}^2
        + \lambda^{1-c_{\partial_{tt}}}\|w_{tt}\|_{1}^2
        + \lambda^{1-c_{\id}}\|w\|_{1}^2
        + \lambda^{1-c_{T^2}}\|\nabla T^2 w\|^2
        \big)
    \\&\indeq\indeq\indeq
    + C_\delta\lambda^{-1 -2c_{T^2} + \min\{c_{\partial_t}, c_{\partial_{tt}}, c_{\id}, c_{T^2}\}}
        \|\nabla T w_t\|^2
    \\&\indeq\indeq\les
    \big(\delta + C_\delta \lambda^{-2 - 2c_{T^2} + c_{T\partial_t} + \min\{c_{\partial_t}, c_{\partial_{tt}}, c_{\id}, c_{T^2}\}}\big) 
    \sum_S \lambda^{-c_S} \widetilde{D_S}
    \\&\indeq\indeq\les
    \big(
      \delta
      + C_\delta \lambda^\epsilon
    \big)
    \sum_S \lambda^{-c_S} \widetilde{D_S}
   \end{split}
   \label{EQ201}
  \end{align}
for all $\delta >0$, where we used \eqref{EQ44} in the second inequality. 
Finally, for $S = T\partial_t$, we have 
  \begin{align}
   \begin{split}
    &
    \lambda^{-c_{T\partial_t}} \int_{\Omega_e} (T\Delta w_t - \Delta Tw_t)\cdot Tw_{tt}
    + \lambda^{-c_{T\partial_t}} \int_{\Gamma_c} (T(N_j \partial_j \partial_t w_i) - N_j\partial_jT \partial_t w_i) T \partial_{tt} w_i
    \\&\indeq\indeq\les 
    \lambda^{-c_{T\partial_t}} \|w_t\|_{2} \|w_{tt}\|_{1}
    \\&\indeq\indeq\les
    \delta \big(
        \lambda^{- c_{\partial_{tt}}}\|w_{ttt}\|^2
        + \lambda^{1-c_{\partial_{tt}}}\|w_{tt}\|^2
        + \lambda^{1-c_{\partial_t}}\|w_t\|_{1}^2
        + \lambda^{1-c_{T\partial_t}}\|\nabla Tw_t\|^2
        \big)
    \\&\indeq\indeq\indeq
    + C_\delta \lambda^{-1 - 2c_{T\partial_t} + \min\{c_{\partial_{tt}}, c_{\partial_t}, c_{T\partial_t}\}} 
        \|w_{tt}\|_{1}^2
    \\&\indeq\indeq\les
    \big(\delta + C_\delta\lambda^{ -2-2c_{T\partial_t} + c_{\partial_{tt}} + \min\{c_{\partial_{tt}}, c_{\partial_t}, c_{T\partial_t}\} }\big) 
    \sum_S \lambda^{-c_S} \widetilde{D_S}
    \\&\indeq\indeq\les
    \big(
      \delta
      + C_\delta \lambda^\epsilon
    \big)
    \sum_S \lambda^{-c_S} \widetilde{D_S}
   \end{split}
   \label{EQ202}
  \end{align}
for all $\delta >0$, where, in the second inequality, we used the first inequality of~\eqref{EQ45}. This completes the proof of~\eqref{EQ130}.

\section{An ODE-type Lemma}\label{sec_ode}
In this section we provide a proof for the ODE-type lemma used in Theorem~\ref{T01}. 
The proof is essentially the same as in~\cite{KO}, however, the powers of $\lambda$ are slightly more general. 
\begin{lemma}
\label{L10}
Given $C \geq 1$, $\gamma \in (0,1]$, $\alpha >1$, $\beta > 2$, $\kappa >3$, and sufficiently small $\lambda >0$, there exists $\eps > 0$ with the following property. 
Suppose that $f \colon [0,\infty) \to [0,\infty)$ is a continuous function satisfying
  \begin{equation}
   f(t) 
   + \lambda \int_\tau^t f
   \leq
   C(1 + \lambda^{\alpha} (t-\tau))f(\tau)
   + C(\lambda^{2} + \lambda^\beta (t-\tau)+ \lambda^\kappa (t-\tau )^2) \int_\tau^t f
   + h(t) O(f)
   ,
   \label{EQ300}
  \end{equation}
for all times $t > 0$ such that 
  \begin{equation}
   h(t)
   := 
   \sup_{[0,t)} f^{1/2}
   + \int_0^t f^{1/2}
   \leq 
   \gamma
   ,
   \label{EQ301}
  \end{equation}
and all $\tau \in [0,t]$, where $O(f)$ denotes any term involving powers of $\lambda$, $C$, and $(t-\tau)$ and at least two factors of the form $f^{1/2}(\tau)$, $f^{1/2}(t)$, $\int_\tau^t f^{1/2}$. 
Then the condition $f(0) \leq \eps$ implies that 
  \begin{equation}
   f(t)
   \leq 
   A\eps \ee^{-t/a}
   ,
   \label{EQ302}
  \end{equation}
for all $t \geq 0$, where $a := 2C/\lambda$ and $A := 30C$.
\end{lemma}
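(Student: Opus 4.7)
My plan is to prove Lemma~\ref{L10} by a bootstrap/continuity argument combined with an iterated ``halving'' scheme on intervals of length $a$. First, I would set $T^\ast := \sup \{ t\geq 0 \colon f(s) \leq A\eps \ee^{-s/a} \text{ for all } s\in [0,t]\}$ and aim to show $T^\ast = \infty$. With $\eps$ chosen small enough (in terms of $\lambda, \gamma, C$), the bootstrap assumption together with the explicit formulas $h(t) \leq \sqrt{A\eps}\bigl(1 + \int_0^t \ee^{-s/(2a)} \d s\bigr) \les \sqrt{A\eps}/\sqrt{\lambda}$ will guarantee $h(T^\ast) \leq \gamma$, so that \eqref{EQ300} is available on $[0,T^\ast]$. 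Likewise, the $hO(f)$ terms will be negligible, in the sense that they can be absorbed into any fixed fraction of $f(t) + \lambda \int_\tau^t f$.

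The core estimate comes from applying \eqref{EQ300} on an interval $[\tau, \tau+a]$. Because $a = 2C/\lambda$, one has $\lambda^{\alpha} a = 2C\lambda^{\alpha-1}$, $\lambda^{\beta} a = 2C\lambda^{\beta-1}$, and $\lambda^{\kappa} a^2 = 4C^2\lambda^{\kappa-2}$, all of which tend to $0$ as $\lambda \to 0^+$ precisely because $\alpha>1$, $\beta>2$, $\kappa>3$. Hence the coefficient in front of $\int_\tau^{t} f$ on the right-hand side is $\ll \lambda$, so it is absorbed by the $\lambda \int_\tau^t f$ on the left. This yields the two bounds I will use in tandem:
\begin{equation}
f(t) \leq 2C f(\tau) + \text{negligible}, \qquad t \in [\tau, \tau+a],\label{EQpt}
\end{equation}
and
\begin{equation}
\int_\tau^{\tau+a} f \leq \frac{2C}{\lambda} f(\tau) + \text{negligible} = a \cdot \frac{f(\tau)}{2} + \text{negligible}.\label{EQint}
\end{equation}
The integral bound \eqref{EQint} forces the existence of some $\tau' \in [\tau, \tau+a]$ with $f(\tau') \leq \tfrac{1}{2} f(\tau)$ (up to the negligible correction).

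Next, I would iterate: starting from $\tau_0 = 0$, select $\tau_{k+1} \in [\tau_k, \tau_k + a]$ with $f(\tau_{k+1}) \leq \tfrac{1}{2} f(\tau_k) + \text{neg.}$, so that by induction $f(\tau_k) \les 2^{-k} f(0)$ (plus a negligible term that sums geometrically). Since $\tau_k \leq ka$ and the intervals $[\tau_k, \tau_k+a]$ cover $[0,\infty)$ (as $\tau_{k+1} \leq \tau_k + a$), any $t$ lies in some $[\tau_k, \tau_k+a]$ with $k \geq t/a - 1$. Applying \eqref{EQpt} on this interval gives $f(t) \leq 2C f(\tau_k) \les C 2^{1-t/a} f(0)$. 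To close the bootstrap with the prescribed constant $A = 30C$ and rate $1/a$ (noting $2^{-t/a} = \ee^{-(t\ln 2)/a}$ offers more than the required decay after combining with $C\ee$-type factors), I would apply a refined version of the halving step where, instead of iterating in unit steps of $a$, I apply \eqref{EQ300} at intervals of length slightly larger than $a$, so that the improvement factor $1/2$ can be upgraded to $1/\ee$; the choice $A=30C$ absorbs the compounded constants from the pointwise bound together with the negligible terms.

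The main obstacle I expect is bookkeeping: ensuring that the cumulative contribution of the $hO(f)$ terms across iterations does not spoil the geometric decay. Since each such term carries at least two factors of $f^{1/2}$ times powers of $\lambda$, $C$, and $(t-\tau)\leq a = 2C/\lambda$, their overall contribution is controlled by $h(T^\ast) \cdot \sup f$, i.e.\ by $\sqrt{\eps/\lambda}$ multiplied by the current bound on $f$. Taking $\eps \leq c \lambda \gamma^2$ for a small absolute constant $c$ (depending on $C, A$), the halving step survives, and induction closes the bootstrap strictly below $A\eps \ee^{-T^\ast/a}$. By continuity this contradicts the definition of $T^\ast$ unless $T^\ast = \infty$, completing the proof.
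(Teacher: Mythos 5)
Your overall framework (bootstrap/continuity, absorbing the $\lambda^\alpha(t-\tau)$, $\lambda^\beta(t-\tau)$, $\lambda^\kappa(t-\tau)^2$ coefficients because $\alpha>1$, $\beta>2$, $\kappa>3$, and treating $hO(f)$ as negligible once $\eps$ is small) is fine, but the halving iteration cannot produce the decay rate $\ee^{-t/a}$ with $a=2C/\lambda$; there is a genuine loss of a constant factor $\ee/2$ in the exponent that your scheme cannot recover, and the comment you make to paper over that loss is based on a sign error.

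Concretely: after absorption, the hypothesis gives roughly $f(t)+\lambda\int_\tau^t f\le C'f(\tau)$ with $C'\approx C$, so the pigeonhole argument on a window of length $L$ yields some $\tau'\in[\tau,\tau+L]$ with
$f(\tau')\le \tfrac{1}{L}\int_\tau^{\tau+L} f\le \tfrac{C'}{\lambda L}f(\tau)$.
Iterating, the best per-unit-time decay you can extract is obtained by maximizing $\tfrac{1}{L}\log\tfrac{\lambda L}{C'}$ over $L$; the maximum is $\tfrac{\lambda}{\ee C'}$, attained at $L=\ee C'/\lambda$. This is strictly \emph{smaller} than $\tfrac{\lambda}{2C}=\tfrac{1}{a}$ because $1/\ee<1/2$. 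With your $L=a$, the per-step ratio is only $1/2$, giving $2^{-t/a}=\ee^{-(\ln 2)t/a}$, and since $\ln 2<1$ this decays \emph{slower} than $\ee^{-t/a}$ — the opposite of what you assert (``offers more than the required decay''). Taking $L\approx \ee a/2>a$ to upgrade $1/2$ to $1/\ee$ per step does not help: the rate becomes $\ee^{-2t/(\ee a)}$, again slower than $\ee^{-t/a}$. Once the proven rate is slower than the claimed one, the bootstrap $f(s)\le A\eps\ee^{-s/a}$ cannot be propagated past a finite time, no matter how you tune $A$. (There is also a secondary issue: $\tau_{k+1}\le\tau_k+a$ alone does not force $\tau_k\to\infty$, so you would in addition have to rule out accumulation of the $\tau_k$.)

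The paper circumvents this loss by not iterating at all. It argues by contradiction at the \emph{first} time $t$ with $f(t)=A\eps\ee^{-t/a}$, and sets $\tau$ to be the \emph{last} earlier time with $f(\tau)=2\eps\ee^{-\tau/a}$. First, the $\lambda\int_\tau^t f$ term on the left, combined with the lower bound $f(s)>2\eps\ee^{-s/a}$ on $(\tau,t]$, forces $t-\tau\le 4C/\lambda=2a$ (if not, the integral exceeds what the right-hand side permits). Then the pointwise part of \eqref{EQ300} gives $A\eps\ee^{-t/a}=f(t)\le 2C\eps\ee^{-\tau/a}(1+o(1))$, hence $A\le 2C\ee^{(t-\tau)/a}(1+o(1))\le 2C\ee^{2}(1+o(1))<30C=A$ for small $\lambda,\eps$, a contradiction. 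This uses the integral information to bound the window length and the pointwise information to bound the growth over that window, avoiding the $\ee/2$ loss inherent in averaging. You would need to replace the halving step with something like this two-threshold contradiction to obtain the stated constants.
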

Note that, as mentioned in Step~3 in Section~\ref{sec_sketch}, applying Lemma~\ref{L10} to \eqref{est_apr}, we obtain the claim of Theorem~\ref{T01} by taking $\lambda>0$ sufficiently small.

\begin{proof}[Proof of Lemma~\ref{L10}.]
First, let $\eps > 0$ be smaller than $\gamma^2 \lambda^2/10^4 C^3$. 
Then $\eps^{1/2} < \gamma$. 
Hence, we can use continuity to get that~\eqref{EQ301} holds for $t \in [0,T)$ for some $T > 0$. 
Then as long as~\eqref{EQ302} holds,~\eqref{EQ301} holds as well and implies that $h(t) < 4a(A\eps)^{1/2} < \gamma$. 
To see this, note that $\sup_{[0,t)} f^{1/2} \leq A^{1/2} \eps^{1/2} < \gamma/2$ and 
  \begin{equation}
   \int_0^t f^{1/2}
   \leq 
   \int_0^\infty f^{1/2}
   \leq 
   A^{1/2} \eps^{1/2} \int_0^\infty \ee^{-s/2a} \,ds
   = 2a A^{1/2} \eps^{1/2}
   < \frac{4C}{\lambda}(30C)^{1/2} \frac{\gamma \lambda}{10^2 C^{3/2}}
   < \frac{\gamma}{2}
   .
   \label{EQ303}
  \end{equation}
Hence, it is sufficient to verify~\eqref{EQ302} for all $t \geq 0$ assuming that~\eqref{EQ301} holds up to~$t$. 
Suppose that the claim in the lemma is false, and let $t > 0$ be the first time such that 
  \begin{equation}
   f(t)
   = 
   A\eps \ee^{-t/a}
   .
   \label{EQ304}
  \end{equation}
Then let $\tau \in [0,t)$ be the last time such that 
  \begin{equation}
   f(\tau) = 2\eps \ee^{-\tau/a}
   .
   \label{EQ305}
  \end{equation}
First, we claim that 
  \begin{equation}
   t- \tau
   \leq 
   \frac{4C}{\lambda}
   .
   \label{EQ306}
  \end{equation}
If this is false, then applying~\eqref{EQ300}  on $[\tau, \tau + 4C/\lambda]$, and  taking into account only the second term on the left-hand side, we get 
  \begin{align}
   \begin{split}
    4C \eps \ee^{-\tau/a}(1-\ee^{-2})
    &=
    2\eps \lambda \int_\tau^{\tau + 4C/\lambda}\ee^{-s/a} \, \d s
    \leq
    \lambda \int_\tau^{\tau + 4C/\lambda}f
    \\&\leq
    2C\eps (1 + 4C\lambda^{\alpha - 1})\ee^{-\tau/a}
    + C(2  C\lambda +  8  C\lambda^{\beta -2} + 32 C^2 \lambda^{\kappa -3})A \eps \ee^{-\tau/a}
    + \widetilde{C} \eps^{3/2} \ee^{-\tau /a}
    ,
    \label{EQ307}
   \end{split}
  \end{align}
where $\widetilde{C} > 0$ only depends on $C$, $\lambda$, and the form of~$hO(f)$.
For the last term in~\eqref{EQ307}, we used $h\eps \leq 4a A^{1/2}\eps^{3/2}$.
Now, note that $1-\ee^{-2} \geq 3/4$, and that $\lambda$ is sufficiently small so that~\eqref{EQ307} gives us 
  \begin{equation}
   3C
   \leq
   2C
   +8C^2 \lambda^{\alpha -1}
   + 120C^3(\lambda + \lambda^{\beta-2}+ \lambda^{\kappa-3} )
   + \widetilde{C} \eps^{1/2}
   .
   \label{EQ308}
  \end{equation}
  Thus, if $\lambda$ is sufficiently small, depending on $C,\alpha, \beta , \kappa$, we can choose $\varepsilon$ sufficiently small so that
and with our choice of $\lambda$ and with $\eps$ sufficiently small, we have
  \begin{equation}
   3C
   \leq
   2C + C/2
   ,
   \label{EQ309}
  \end{equation}
which is a contradiction. 
Hence,~\eqref{EQ306} holds.

Next, we apply~\eqref{EQ300} on $[\tau,t]$, and considering only the first term on the left-hand-side gives us 
  \begin{equation}
   30C \eps \ee^{-t/a}
   =
   f(t)
   \leq 
   2\eps C(1 + 4C\lambda^{\alpha -1})\ee^{-\tau /a}
   + 30C^2 (2  C\lambda + 8  C\lambda^{\beta -2} + 32 C^2 \lambda^{\kappa-3} ) \eps \ee^{-\tau/a}
   + \widetilde{C}\eps^{3/2}\ee^{-\tau/a}
   .
   \label{EQ310}
  \end{equation}
Note that $\ee^{-\tau/a} = \ee^{-\tau/a} \ee^{(t-\tau)/a} \leq \ee^{-t/a} \ee^2 \leq 10\ee^{-t/a}$. 
Taking this bound into account in~\eqref{EQ310} and dividing through by $\eps \ee^{-t/a}$, we get
  \begin{equation}
   30C
   \leq
   20C
   + 80C^2\lambda^{\alpha -1}
   + 1200 C^3 (\lambda + \lambda^{\beta -2} +\lambda^{\kappa-3}  )
   + 10\widetilde{C}\eps^{1/2}
   .
   \label{EQ311}
  \end{equation}
  Thus, as above, if $\lambda$ is sufficiently small, we can choose $\varepsilon$ small enough to get a contradiction.
\end{proof}

\section*{Acknowledgments}
BI and IK were supported in part by the NSF grant DMS-2205493,
while WO was supported by the NSF grant DMS-2511556 and by the Simons grant SFI-MPS-TSM-00014233.


\begin{thebibliography}{[BGLT2]}
\small 
\bibitem[ALT]{ALT} 
G.~Avalos, I.~Lasiecka, and R.~Triggiani, \emph{Higher regularity
  of a coupled parabolic-hyperbolic fluid-structure interactive system},
  Georgian Math.~J.~\textbf{15} (2008), no.~3, 403--437. 

\bibitem[AT1]{AT1} 
G.~Avalos and R.~Triggiani, \emph{The coupled {PDE} system arising
in
  fluid/structure interaction.~{I}. Explicit semigroup generator and its
  spectral properties}, Fluids and waves, Contemp.\ Math., vol.~440, Amer.\ Math.
  Soc., Providence, RI, 2007, pp.~15--54.

\bibitem[B]{B} 
M.~Boulakia, \emph{Existence of weak solutions for the three-dimensional
  motion of an elastic structure in an incompressible fluid}, J.~Math.\ Fluid
  Mech.~\textbf{9} (2007), no.~2, 262--294. 

\bibitem[BG1]{BG1} 
M.~Boulakia and S.~Guerrero, \emph{Regular solutions of a problem coupling a
  compressible fluid and an elastic structure}, J.~Math.\ Pures Appl.~(9)
 ~\textbf{94} (2010), no.~4, 341--365. 

\bibitem[BG2]{BG2}   
M.~Boulakia and S.~Guerrero, 
  \emph{A regularity result for a solid-fluid system associated to the compressible {N}avier-{S}tokes equations}, 
  Ann.\ Inst.~H.\ Poincar\'e Anal.\ Non Lin\'eaire~\textbf{26} (2009), no.~3, 777--813.

\bibitem[BGLT1]{BGLT1} 
V.~Barbu, Z.~Gruji{\'c}, I.~Lasiecka, and A.~Tuffaha,
  \emph{Existence of the energy-level weak solutions for a nonlinear fluid-structure interaction model}, 
  Fluids and waves, Contemp.\ Math., vol.~440, Amer.\ Math.\ Soc., Providence, RI, (2007), 55--82.

\bibitem[BGLT2]{BGLT2} 
V.~Barbu, Z.~Gruji{\'c}, I.~Lasiecka, and A.~Tuffaha,
\emph{Smoothness of weak solutions to a nonlinear fluid-structure
  interaction model}, Indiana Univ.\ Math.~J.~\textbf{57} (2008), no.~3,
  1173--1207.

\bibitem[BGT]{BGT} 
M.~Boulakia, S.~Guerrero, and T.~Takahashi, 
  \emph{Well-posedness for the coupling between a viscous incompressible fluid and an elastic structure}, 
  Nonlinearity~\textbf{32} (2019), no.~10, 3548--3592. 

\bibitem[BZ1]{BZ1} 
L.~Bociu and J.-P.~Zol{\'e}sio, \emph{Sensitivity analysis for a free
  boundary fluid-elasticity interaction}, Evol.\ Equ.\ Control Theory~\textbf{2}
  (2013), no.~1, 55--79. 

\bibitem[C]{C}
D.~Coutand,
\emph{Global existence and convergence near equilibrium for the moving interface problem between Navier-Stokes and the linear wave equation}, arXiv:2409.20468.

\bibitem[CS1]{CS1} 
D.~Coutand and S.~Shkoller, \emph{Motion of an elastic solid inside
an
  incompressible viscous fluid}, Arch.\ Ration.\ Mech.\ Anal.~\textbf{176} (2005),
  no.~1, 25--102.

\bibitem[CS2]{CS2} 
D.~Coutand and S.~Shkoller, \emph{The interaction between quasilinear
  elastodynamics and the {N}avier-{S}tokes equations}, Arch.\ Ration.\ Mech.\ Anal.~\textbf{179} (2006), no.~3, 303--352. 

\bibitem[DEGL]{DEGL}
B.~Desjardins, M.J.~Esteban, C.~Grandmont, and P.~Le~Tallec,
\emph{Weak solutions for a fluid-elastic structure interaction model}, Rev.\ Mat.\ Complut.~\textbf{14} (2001), no.~2, 523--538.

\bibitem[DGHL]{DGHL} 
Q.~Du, M.D.~Gunzburger, L.S.~Hou, and J.~Lee, \emph{Analysis of a linear
  fluid-structure interaction problem}, Discrete Contin.\ Dyn.\ Syst.~\textbf{9}
  (2003), no.~3, 633--650. 


\bibitem[Gri]{Gri} 
P.~Grisvard, \emph{Elliptic Problems in Nonsmooth Domains}, Pitman Advanced Pub. Program,~1985. 

\bibitem[G]{G}
G.~Grubb,
  \emph{Initial value problems for the Navier-Stokes equations with Neumann conditions}, In: Heywood, J.G., Masuda, K., Rautmann, R., Solonnikov, V.A. (eds) The Navier-Stokes Equations II — Theory and Numerical Methods. Lecture Notes in Mathematics, vol 1530. Springer, Berlin, Heidelberg (1992).
  
  \bibitem[GS]{GS}
G.~Grubb, and V.~A.~Solonnikov 
  \emph{Boundary value problems for the nonstationary Navier-Stokes equations treated by pseudo-differential methods}, Math. Scand.~\textbf{69} (1991), 217--290. 
  
  

\bibitem[GGCC]{GGCC}
G.~Guidoboni, R.~Glowinski, N.~Cavallini, and S.~Canic,
  \emph{Stable loosely-coupled-type algorithm for fluid-structure interaction
  in blood flow}, J.~Comput.\ Phys.~\textbf{228} (2009), no.~18, 6916--6937.

\bibitem[IKLT1]{IKLT1} 
M.~Ignatova, I.~Kukavica, I.~Lasiecka, and A.~Tuffaha, \emph{On well-posedness for a free boundary fluid-structure model}, 
  J.~Math.\ Phys.~\textbf{53} (2012), no.~11, 115624, 13~pp. 

\bibitem[IKLT2]{IKLT2}
M.~Ignatova, I.~Kukavica, I.~Lasiecka, and A.~Tuffaha, \emph{On
  well-posedness and small data global existence for an interface damped free
  boundary fluid-structure model}, Nonlinearity~\textbf{27} (2014), no.~3,
  467--499. 

\bibitem[IKLT3]{IKLT3} M.~Ignatova, I.~Kukavica, I.~Lasiecka, and A.~Tuffaha, 
{\em Small data global existence for a fluid-structure model},
Nonlinearity~\textbf{30} (2017), 848--898.

\bibitem[JKL]{JKL}
J.~Jang, I.~Kukavica, and L.~Li,
\emph{Mach limits in analytic spaces on exterior domains},
Discrete and Continuous Dynamical Systems, 2022, 42(8): 3629--3659.

\bibitem[KPV]{kpv} 
  C.~E.~Kenig, G.~Ponce, and L.~Vega \emph{Well-posedness and scattering results for the generalized Korteweg-de Vries equation via the contraction priciple}, Comm.\ Pure\ Appl.\ Math.~\textbf{46}
  (1993), no.~4, 527--620. 

\bibitem[K]{K}
G.~Komatsu,
\emph{Analyticity up to the boundary of solutions of nonlinear parabolic equations},
Comm. Pure. Appl. Math. 32 (1979), no.~5, 669--720.

\bibitem[KO]{KO}
I.~Kukavica and W.~O\.{z}a\'{n}ski,
\emph{Global well-posedness and exponential decay for a fluid-structure model with small data},
Indiana University Math. J.~\textbf{72} (2023), no.~6, 2701--2731.


\bibitem[KLT]{KLT}
I.~Kukavica, L.~Li, and A.~Tuffaha,
\emph{On the local existence of solutions to the Navier-Stokes-wave system with a free interface}
Applied Mathematics and Optimization (to appear), arXiv:2202.02707.

\bibitem[KT1]{KT1} 
  I.~Kukavica and A.~Tuffaha, \emph{Solutions to a fluid-structure interaction free boundary problem}, Discrete Contin.\ Dyn.\ Syst.~\textbf{32}
  (2012), no.~4, 1355--1389. 

\bibitem[KT2]{KT2}
I.~Kukavica and A.~Tuffaha, \emph{Regularity of solutions to a free boundary problem of fluid-structure interaction}, Indiana Univ.\ Math.~J.~\textbf{61} (2012), no.~5, 1817--1859. 

\bibitem[KT3]{KT3} 
  I.~Kukavica and A.~Tuffaha, 
  \emph{Well-posedness for the compressible {N}avier-{S}tokes-{L}am\'e system with a free interface}, 
  Nonlinearity~\textbf{25} (2012), no.~11, 3111--3137. 

\bibitem[LM]{LM} 
J.~L.~Lions and E.~Magenes, \emph{Non-Homogeneous Boundary Value Problems and Applications}, Vol. 1, Grundlehren der mathematischen Wissenschaften, Springer-Verlag, Berlin Heidelberg 1972.

\bibitem[LT]{LT} 
I.~Lasiecka and A.~Tuffaha, 
  \emph{Riccati theory and singular estimates for a {B}olza control problem arising in linearized fluid-structure interaction}, 
  Systems Control Lett.~\textbf{58} (2009), no.~7, 499--509.

\bibitem[MC1]{MC1} 
  B.~Muha and S.~\v{C}ani\'{c}, 
  \emph{Existence of a weak solution to a nonlinear fluid-structure interaction problem modeling the flow of an incompressible, viscous fluid in a cylinder with deformable walls}, 
  Arch.\ Ration.\ Mech.\ Anal.~\textbf{207} (2013), no.~3, 919--968. 

\bibitem[MC2]{MC2} 
B.~Muha and S.~\v{C}ani\'{c}, 
  \emph{Existence of a weak solution to a fluid-elastic structure interaction problem with the {N}avier slip boundary condition}, 
  J.~Differential Equations~\textbf{260} (2016), no.~12,
  8550--8589. 





\bibitem[RV]{RV} 
J.-P.~Raymond and M.~Vanninathan, \emph{A fluid-structure model
  coupling the {N}avier-{S}tokes equations and the {L}am\'e system}, J.~Math.\ Pures Appl.~(9)~\textbf{102} (2014), no.~3, 546--596. 


\bibitem[S]{S} 
V.~A.~Solonnikov, \emph{Estimates for solutions of nonstationary Navier-Stokes equation}, J.~Soviet. Math.~\textbf{8} (1977), 467--523. 

\bibitem[Te1]{Te1} 
R.~Temam, \emph{Infinite-dimensional dynamical systems in mechanics and
  physics}, second ed., Applied Mathematical Sciences, vol.~68,
  Springer-Verlag, New York, 1997. 

\bibitem[Te2]{Te2} 
R.~Temam, \emph{Navier-{S}tokes equations}, AMS Chelsea Publishing,
  Providence, RI, 2001, Theory and numerical analysis, Reprint of the 1984
  edition.

\end{thebibliography}
\end{document}